\def\captionof#1#2{{\def\@captype{#1}#2}}
\newcounter{tablegroup}
\newcounter{subtable}[tablegroup]
\newtheorem{thm}{Theorem}[section]
\newtheorem{cor}[thm]{Corollary}
\newtheorem{lem}[thm]{Lemma}
\newtheorem{defn}[thm]{Definition}
\newtheorem{rem}[thm]{\bf Remark}
\numberwithin{equation}{section}
\newcommand{\eps}{\varepsilon}
\begin{document}
\title{Monotone Maps on dendrites and their induced maps}

\author{Haithem Abouda and Issam Naghmouchi}
\address{Haithem Abouda, University of Carthage, Faculty
of Sciences of Bizerte, Department of Mathematics,
Jarzouna, 7021, Tunisia;}

\email{haithem.abouda@yahoo.fr}
\address{Issam Naghmouchi, University of Carthage, Faculty
of Sciences of Bizerte, Department of Mathematics,
Jarzouna, 7021, Tunisia;}

\email{issam.naghmouchi@fsb.rnu.tn;issam.nagh@gmail.com}

\subjclass[2000]{37B05, 37B20, 37B40, 37B45, 37E99}

\keywords{$\omega$-limit set; Monotone  dendrite maps; Induced maps; $\omega$-chaos; Periodic Points; Regularly Recurrent points.}

\begin{abstract}
A continuum $X$ is a dendrite if it is locally connected and contains no simple closed curve, a self mapping $f$ of $X$ is called monotone if the preimage of any connected subset of $X$ is connected. If $X$ is a dendrite and $f:X\to X$ is a monotone continuous map then we prove that any $\omega$-limit set is approximated by periodic orbits and the family of all $\omega$-limit sets is closed with respect to the Hausdorff metric.  Second, we prove that the equality between the closure of the set of periodic points, the set of regularly recurrent points and the union of all $\omega$-limit sets holds for the induced maps $\mathcal{F}_n(f):\mathcal{F}_n(X)\to \mathcal{F}_n(X)$ and $\mathcal{T}_n(f):\mathcal{T}_n(X)\to \mathcal{T}_n(X)$ where $\mathcal{F}_n(X)$ denotes the family of finite subsets of $X$ with at most $n$ points, $\mathcal{T}_n(X)$ denotes the family of subtrees of $X$ with at most $n$ endpoints and $\mathcal{F}_n(f)=2^f_{\mid\mathcal{F}_n(X)}$,  $\mathcal{T}_n(f)=2^f_{\mid\mathcal{T}_n(X)}$, in particular there is no Li-Yorke pair for these maps. However, we will show that this rigidity in general is not exhibited by the induced map $\mathcal{C}(f):\mathcal{C}(X)\to \mathcal{C}(X)$ where $\mathcal{C}(X)$ denotes the family of sub-continua of $X$ and $\mathcal{C}(f)=2^f_{\mid\mathcal{C}(X)}$, we will discuss an example of a homeomorphism $g$ on a dendrite $S$ which is dynamically simple whereas its induced map $\mathcal{C}(g)$ is $\omega$-chaotic and has infinite topological entropy.
\end{abstract}
\maketitle

\section{\bf Introduction}

Let $\mathbb{Z}_{+}$ and $\mathbb{N}$ be the sets of non-negative integers and positive integers respectively. Let $X$ be a compact metric space with metric $d$ and $f: X\longrightarrow X$ be a continuous map. Denote by $f^{n}$ the $n$-th iterate of $f$; that is,  $f^{0} =\textrm{id}_{X}$: the Identity and $f^{n} = f\circ f^{n-1}$ if $n\geq 1$. For any $x\in X$ the subset $O_{f}(x)=\{f^{n}(x): \ n\in\mathbb{Z}_{+}\}$ is called the $f$-orbit of $x$. A point $x\in X$  is called \textit{periodic} if $f^n(x)=x$ for some $n>0$. When $n=1$ we say that $x$ is a \textit{fixed point}.
The period of $p$ is the least natural number $m$ such that $f^m(p)=p$.
A subset $A$ of $X$ is called \textit{$f$-invariant} (resp. strongly $f$-invariant) if
$f(A)\subset A\  (\textrm{resp}.\ f(A)=A$). It is called \textit{a minimal set of $f$} if it is non-empty, closed, $f$-invariant and minimal (in the sense of inclusion) for these properties.  Note that a nonempty closed set $M \subseteq X$ is minimal if and only if the orbit of every point from $M$ is dense in $M$. The
 $\omega$-limit set of a point $x$ is the set
\begin{align*}
 \omega_{f}(x) & = \{y\in X: \exists\ n_{i}\in \mathbb{N},
n_{i}\rightarrow\infty, \lim_{i\rightarrow\infty}d(f^{n_{i}}(x), y)
= 0\}\\
& = \underset{n\in \mathbb{N}}\cap\overline{\{f^{k}(x): k\geq n\}}.
\end{align*}

The set $\omega_{f}(x)$ is a non-empty, closed and strongly
invariant set. If $\omega_f(x)$ is finite then it is a periodic orbit. If $\omega_{f^m}(x)$ is finite for some $m\in \mathbb{N}$ then $\omega_f(x)$ is also finite (see \cite{lsB} for more details).\\
 A point $x\in X$ is said to be:

-  \textit{recurrent} if for every open set $U$ containing $x$ there exists $n\in \mathbb{N}$ such that $f^n(x)\in U$ (equivalently, $x\in \omega_{f}(x)$).

- \textit{regularly recurrent} if for any $\varepsilon>0$, there is
$N\in\mathbb{N}$ such that $d(x,f^{kN}(x))<\varepsilon$ for all
$k\in\mathbb{N}$.

It is known that if $x$ is regularly recurrent then
 $\omega_{f}(x)$ is a minimal set (see
\cite{lsB}, Proposition 5, Chapter V).

Let  $\textrm{Fix}(f)$, $\textrm{\textrm{P}(f)}$, $\textrm{RR}(f)$, $\textrm{R}(f)$ and $\Lambda(f)$ denote the set of fixed
points, periodic points, regularly recurrent, recurrent points and the union of all $\omega$-limit sets
respectively. Then we have the following inclusion relation
$\textrm{Fix}(f)\subset \textrm{P}(f)\subset \textrm{RR}(f)\subset \textrm{R}(f) \subset \Lambda(f)$.

\medskip

The notion of \textit{topological entropy} for a continuous self-map of a compact metric space  was first introduced by Adler, Konheim and McAndrew in \cite{Adler}. We recall here the equivalent definition formulated by Bowen \cite{Bowen}, and independently by Dinaburg \cite{Dinaburg}:
Let $(X,d)$ be a compact metric space and let $f:X\rightarrow X$ be a map. Fix $n\geq 1$ and $\varepsilon> 0$. A subset $E$ of $X$ is called $(n,f,\varepsilon)-$separated if for every two different points $x,y\in E$, there exists $0\leq j< n$ with $d(f^j(x),f^j(y))> \eps$. Denote by $sep(n,f,\varepsilon)$ the maximal possible cardinality of an $(n,f,\varepsilon)$-separated set in X. Then the topological entropy of $f$ is defined by $$h(f)=\underset{\varepsilon\to 0}\lim \underset{n\to+\infty}\limsup \frac{1}{n}\ Log\ sep(n,f,\varepsilon).$$

A pair $(x,y)\in X\times X$ is called \textit{proximal} if $\underset{n\to \infty}\liminf\ d(f^n(x),f^n(y))=0$. If $\underset{n\to \infty}\limsup\ d(f^n(x),f^n(y))=0, (x,y)$ is called \textit{asymptotic}. A pair $(x,y)$ is called a \textit{Li-Yorke pair} (of $f$) if it is proximal but not asymptotic. A subset $S$ of $X$ containing at least two points is called a \textit{Li-Yorke scrambled set} (of $f$) if for any $x,y\in S$ with $x\neq y, (x,y)$ is a Li-Yorke pair. We say that $f$ is \textit{Li-Yorke chaotic} if there exists an uncountable scrambled set of $f$. It is proved in \cite{Blanchard} that if $h(f)>0$ then $f$ is Li-Yorke chaotic.\\

A subset $S$ of $X$ containing at least two points is called an \textit{$\omega$-scrambled} set for $f$ if for any $x,y\in S$ with $x\neq y$ the following conditions are fulfilled:
 \begin{itemize}
   \item[(i)] $\omega_f(x)\setminus \omega_f(y)$ is uncountable,
   \item[(ii)] $\omega_f(x)\cap \omega_f(y)\neq\emptyset$,
   \item[(iii)] $\omega_f(x)\setminus \textrm{P}(f)\neq\emptyset.$
 \end{itemize}
The map $f$ is called \textit{$\omega$-chaotic}  if there is an uncountable $\omega$-scrambled set (\cite{Li}). Following Theorem 1.4 and Corollary 2.6 from \cite{Marzougui} the definition of $\omega$-scrambled set is reduced only to conditions (i) and (ii) when the space $X$ is either a completely regular continuum (i.e every non-degenerate sub-continuum of $X$ has non-empty interior) or a zero-dimensional compact space .\\
It is noteworthy that $\omega$-chaos is not related to the topological entropy. The paper (\cite{Pikula}, Example 4.3) provides an example of a map which is $\omega$-chaotic with zero topological entropy. Surely, positive topological entropy is not enough to imply $\omega$-chaos, since there is known example of minimal map with positive topological entropy (\cite{Rees}).\\

 A continuum is a compact connected metric space. A topological space
is arcwise connected if any two of its points can be joined by an arc. We use the terminologies from Nadler \cite{Nadler}. An arc is any space homeomorphic to the compact interval $[0,1]$. By a \textit{dendrite} $X$, we mean a locally connected continuum which contains no homeomorphic copy to a circle. Every sub-continuum of a dendrite is a dendrite (\cite{Nadler}, Theorem 10.10) and every connected subset of $X$ is arcwise connected (\cite{Nadler}, Proposition 10.9). As dendrites are locally connected, any connected component of open subset of a dendrite is open in $X$. In addition, any two distinct points $x,y$ of a dendrite $X$ can be joined by a unique arc with endpoints $x$ and
$y$, denote this arc by $[x,y]$ and let denote by $[x,y)=[x,y]\setminus\{y\}$ (resp. $(x,y]=[x,y]\setminus\{x\}$ and $(x,y)=[x,y]\setminus\{x,y\}$).
If $X$ is a dendrite and $p\in X$, then the order of $p$, denoted by $ord_X(p)$,  is the number of connected components  of $X\setminus \{p\}$. If $ord_X(p)=1$, we say that $p$  is an \textit{endpoint} and if $ord_X(p)> 2$ then we say that $p$ is a \textit{branch point}. Denote by $\textrm{End}(X)$ and $\textrm{Br}(X)$ the sets of endpoints and branch points of $X$ respectively. A point $x\in X\setminus \textrm{End}(X)$ is called a \textit{cut point}. Denote by $\textrm{Cut}(X)$ the set of cut points of $X$. It is known that $\textrm{Cut}(X)$ is dense in $X$ (\cite{Kura}, VI, Theorem 8, p.302). A \textit{tree} is a dendrite with finite set of endpoints. A continuous map from a dendrite into itself is called a \textit{dendrite map}. Every dendrite has the fixed point property (see \cite{Nadler}); that is when $f$ is a dendrite map, then $\textrm{Fix}(f)\neq \emptyset$.\ Given a closed subset $S$ of $X$, denote by  $[S]=\displaystyle\cup_{x\neq y\in S}[x,y]$, called the \textit{convex hull} of $S$.\\

\begin{defn}\cite{Kura} Let $X,\ Y$ be two topological spaces. A map $f:X\rightarrow Y$ is said to be monotone of for any connected subset $C$ of\ $Y,\ f^{-1}(C)$ is connected.
\end{defn}

Notice that $f^n$ is monotone for every $n\in \mathbb{N}$ when $f$ itself is monotone. Also note that when $X$ is a dendrite then the monotony of $f:X\to Y$ implies that of $f_{\mid A}:A\to Y$ for any connected subset $A$ of $X$, this is due to the fact that the intersection of any two connected subset of $X$ is connected  (\cite{Nadler}, Theorem 10.10).\\
 Given a continuum $X$ with a metric $d$, we  denote by $2^X$ the hyperspace of all nonempty closed subsets of $X$. For any two subsets $A$ and $B$ of $X$, define $d(A,B)=\underset{x\in A, \ y\in B}\inf d(x,y)$ and denote by $d(x,A)=d(\{x\},A)$. The Hausdorff metric $d_H$ is defined as follows: Let $A,B\in 2^X$
$$d_H(A,B)=\max\big\{\sup_{x\in A}d(x,B),\sup_{y\in B}d(A,y)\big\}.$$
This defines a distance on $2^X$ (\cite{Nadler2}, Theorem 2.2). Further, we denote by $\mathcal{C}(X)$ the hyperspace of all sub-continua of $X$ (i.e, the family of all connected elements of $2^X$). If $X$ is a continuum and $f:X\rightarrow X$ is a map, we can consider the maps (called the induced maps) $2^f:2^X\rightarrow 2^X$ and $\mathcal{C}(f):\mathcal{C}(X)\rightarrow \mathcal{C}(X)$ defined as follows:  $2^f(A)=f(A),\ \text{for\ each}\ A\in 2^X$ and  $\mathcal{C}(f)=2^f_{\mid \mathcal{C}(X)}.$
It is known that the continuity of $f$ implies the continuity of both $2^f$ and $C(f)$ (\cite{Nadler2}, Lemma 13.3).\\
 For a metric space $(X,d)$, the interior, the closure and the diameter of a subset $A\subseteq X$ is denoted by int$(A),\ \overline{A}$ and $\textrm{diam}(A)$ respectively. Let $x\in X,\ Y\subseteq X$ and $\eps> 0$. One write $B(x,\eps)=\{x'\in X :d(x,x')< \eps\}$  for the $\eps$-ball and $B(Y, \eps)=\{x\in X, d(x,Y)< \eps\}$.\\
The natural question arises: what is the connection between dynamical properties of $f$ and those the induced maps $2^f$ and $\mathcal{C}(f)$. For papers related to this topic, see (\cite{Banks},\cite{Acosta},\cite{Matviichuk},\cite{Kwietniak}).

In this paper, we are dealing with monotone dendrite maps $f:X\to X$ and their induced maps $\mathcal{F}_n(f):\mathcal{F}_n(X)\to \mathcal{F}_n(X)$, $\mathcal{T}_n(f):\mathcal{T}_n(X)\to \mathcal{T}_n(X)$ and $\mathcal{C}(f):\mathcal{C}(X)\to \mathcal{C}(X)$, where $\mathcal{F}_n(X)$ denotes the family of finite subsets of $X$ with at most $n$ points, $\mathcal{T}_n(X)$ denotes the family of subtrees of $X$ with at most $n$ endpoints and $\mathcal{F}_n(f)=2^f_{\mid\mathcal{F}_n(X)}$,  $\mathcal{T}_n(f)=2^f_{\mid\mathcal{T}_n(X)}$.\\
 This paper is organized as follow: We give in Section 2, some preliminaries results which are useful in the rest of the paper. In Section 3, we give more description on the dynamic of monotone dendrite maps. In section 4, we prove that any $\omega$-limit set is approximated by periodic orbits and the space of all $\omega$-limit sets is closed with respect to the Hausdorff metric. Sections 5 and 6 are devoted to the study of the dynamics of the induced maps $\mathcal{F}_n(f)$ and $\mathcal{T}_n(f)$ respectively. Finally, in Section 7 we discuss an example of a homeomorphism dendrite $g:S\rightarrow S$ dynamically simple whereas its induced map $\mathcal{C}(g)$ is $\omega$-chaotic and has infinite topological entropy.

\section{\bf Preliminaries}

The proof of the following Lemma is trivial.
\begin{lem}\label{lem0}If $J$ is a compact interval and $f:J\rightarrow J$ is a continuous monotone map, then for any $x\in J,\ \omega_f(x)$ is either a fixed point or a periodic orbit of period $2$.
\end{lem}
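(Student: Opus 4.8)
The plan is to prove that for a continuous monotone map $f:J\to J$ on a compact interval, $\omega_f(x)$ is either a fixed point or a period-$2$ orbit. A continuous monotone map on an interval is either nondecreasing or nonincreasing (monotone in the preimage-connected sense coincides with the usual monotonicity on an interval, since preimages of subintervals are subintervals). First I would treat the nondecreasing case. Here $f$ preserves the order, so for any $x$ the sequence $(f^n(x))_{n\ge 0}$ is itself monotone: if $f(x)\ge x$ then applying the nondecreasing map repeatedly gives $f^{n+1}(x)\ge f^n(x)$ for all $n$, so the orbit is nondecreasing, and symmetrically if $f(x)\le x$ it is nonincreasing. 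A bounded monotone sequence in $\Real$ converges, hence $f^n(x)\to p$ for some $p\in J$, and by continuity $f(p)=p$. Thus $\omega_f(x)=\{p\}$ is a single fixed point.

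For the nonincreasing case, the key observation is that $g:=f^2=f\circ f$ is nondecreasing (the composition of two nonincreasing maps is nondecreasing) and continuous. Applying the result of the first case to $g$, the sequence $(g^n(x))_{n\ge0}=(f^{2n}(x))$ converges to a fixed point $p$ of $g$, i.e. $f^2(p)=p$, and likewise $(f^{2n+1}(x))=(g^n(f(x)))$ converges to a fixed point $q=f(p)$ of $g$. Then $\omega_f(x)\subseteq\{p,q\}$, and by continuity $f(p)=q$, $f(q)=p$. If $p=q$ then $p$ is a fixed point of $f$; otherwise $\{p,q\}$ is a periodic orbit of period $2$. This dichotomy is exactly the claimed conclusion.

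The only point requiring care is the justification that a monotone map in the sense of the paper's Definition (preimages of connected sets are connected) is, on an interval, a genuinely monotone (nondecreasing or nonincreasing) function. On $J=[a,b]$ the connected sets are the subintervals, and $f^{-1}$ of every subinterval being an interval forces $f$ to have no strict local extremum in the interior and hence to be globally monotone; this is elementary and is the part I would verify explicitly, after which the convergence arguments above are routine. The main obstacle, if any, is purely this translation between the two notions of monotonicity, and once it is settled the rest follows immediately from the classical fact that a bounded monotone real sequence converges together with continuity of $f$.
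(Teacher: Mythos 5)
Your proof is correct; note that the paper offers no proof at all for this lemma (it simply declares it trivial), so your argument supplies exactly the standard reasoning the authors had in mind: on an interval, monotonicity in the preimage-connectedness sense coincides with ordinary weak monotonicity, orbits under a nondecreasing map are monotone sequences converging to a fixed point, and the nonincreasing case reduces to this by passing to $f^2$. The one point you rightly single out---the equivalence of the two notions of monotonicity---also goes through as you sketch it: if $x<y<z$ and $f(y)$ lay strictly outside the interval spanned by $f(x)$ and $f(z)$, then the preimage of a suitable closed subinterval of $J$ would contain $x$ and $z$ but not $y$, contradicting connectedness of preimages.
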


\begin{lem}\label{lem8}\rm{(\cite{Ma2}, Lemma 2.1)}\ Let $(X,d)$ be a dendrite. Then for every $\varepsilon > 0$,\ there exists $0<\delta=\delta(\varepsilon)<\eps$ such that, for any $x,y\in X$ with $d(x,y)\leq \delta,$\ we have $\mathrm{diam}([x,y])< \varepsilon$.
\end{lem}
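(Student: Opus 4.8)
The plan is to argue by contradiction, using the compactness of $X$ together with the arcwise-connectedness and unique-arc structure of dendrites recorded in the Introduction. Suppose the conclusion fails for some $\eps_0 > 0$. Then for every $n \in \mathbb{N}$ one can find points $x_n, y_n \in X$ with $d(x_n, y_n) \le 1/n$ but $\mathrm{diam}([x_n, y_n]) \ge \eps_0$. Since the compact arc $[x_n,y_n]$ has diameter at least $\eps_0$, it contains two points realizing a distance $\ge \eps_0$; one of them, call it $z_n$, satisfies $d(z_n, x_n) \ge \eps_0/2$ by the triangle inequality.

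First I would pass to convergent subsequences. By compactness of $X$ we may assume $x_n \to p$, $y_n \to p'$ and $z_n \to q$. Because $d(x_n, y_n) \to 0$ we get $p = p'$, and because $d(z_n, x_n) \ge \eps_0/2$ for all $n$ we get $d(q, p) \ge \eps_0/2$, so in particular $q \ne p$.

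The heart of the argument is a separation step. Since $q \ne p$, the open arc $(p,q)$ is non-empty; fix any point $c \in (p,q)$. By uniqueness of arcs, any arc from a point of $[p,c)$ to a point of $(c,q]$ must pass through $c$, so $p$ and $q$ lie in \emph{different} connected components of $X \setminus \{c\}$; write $W$ for the component containing $p$. As $X$ is locally connected, $W$ is open, and since the components of the open set $X\setminus\{c\}$ are pairwise disjoint open sets whose union is $X\setminus\{c\}$, one checks that $\overline{W} \subseteq W \cup \{c\}$ (a boundary point of $W$ other than $c$ would lie in some other open component, hence not in $\overline{W}$). Now $x_n, y_n \to p \in W$ with $W$ open, so $x_n, y_n \in W$ for all large $n$. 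Since $W$ is connected it is arcwise connected, and by uniqueness of arcs it contains the whole arc joining any two of its points; hence $[x_n, y_n] \subseteq W$, and in particular $z_n \in W$. Letting $n \to \infty$ gives $q \in \overline{W} \subseteq W \cup \{c\}$. But $q \notin W$ (it lies in a different component) and $q \ne c$ (as $c$ is interior to $[p,q]$), a contradiction. This yields a working $\delta = \delta(\eps)$, and replacing it by $\min\{\delta, \eps/2\}$ ensures $0 < \delta < \eps$, as required.

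The main obstacle is this separation step: one must choose the cut point $c$ on the arc $[p,q]$ and verify both that it separates $p$ from $q$ and that the component $W$ containing $p$ satisfies $\overline{W} \subseteq W \cup \{c\}$, so that the limit $q$ of the sequence $(z_n) \subseteq W$ cannot escape to a point distinct from $p$ and $c$. Everything else — extracting subsequences, locating $z_n$, and confining each arc $[x_n,y_n]$ to $W$ — is a routine consequence of compactness and of the facts that connected subsets of a dendrite are arcwise connected and that any two points are joined by a unique arc.
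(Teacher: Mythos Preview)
The paper does not prove this lemma; it is quoted from \cite{Ma2} without argument, so there is no ``paper's own proof'' to compare against. Your proof is correct and self-contained: the contradiction set-up and the compactness extraction are routine, and the separation step is sound --- any $c\in(p,q)$ does separate $p$ from $q$ in a dendrite (otherwise a connected subset avoiding $c$ would, by arcwise connectedness of connected subsets, contain a second arc from $p$ to $q$), the component $W$ is open by local connectedness, and $\overline{W}\subseteq W\cup\{c\}$ follows since the remaining components form an open set disjoint from $W$. The inclusion $[x_n,y_n]\subseteq W$ then follows exactly as you argue, forcing $q\in W\cup\{c\}$ and giving the contradiction.
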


\begin{lem}\rm{(\cite{Ma2}, Lemma 2.2)}\label{lem5} Let $[a,b]$ be an arc in a dendrite $(X,d)$ and $w\in [a,b)$.\ There is $\delta > 0$ such that if $v\in X$ with $d(v,b)\leq \delta$ then $[a,v]\supset [a,w]$.
\end{lem}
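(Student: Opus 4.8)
The plan is to exploit the uniqueness of arcs in the dendrite $X$ together with local connectedness. First I would dispose of the trivial case $w=a$, in which $[a,w]=\{a\}\subset[a,v]$ for every $v$, so any $\delta$ works. Henceforth I assume $w\in(a,b)$, so that $w$ lies in the interior of the arc $[a,b]$ and is therefore a cut point of $X$ (a point of order at least $2$, hence not an endpoint).

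Next I would locate a separating component. Since $\{w\}$ is closed, $X\setminus\{w\}$ is open, and because $X$ is locally connected its connected components are open in $X$. Let $C$ denote the component of $X\setminus\{w\}$ containing $b$. The key structural claim is that $a\notin C$, i.e.\ that $a$ and $b$ lie in different components of $X\setminus\{w\}$: if they shared a component, that component, being a connected subset of a dendrite, would be arcwise connected and would contain an arc from $a$ to $b$ avoiding $w$; by uniqueness of arcs this arc would coincide with $[a,b]$, contradicting $w\in[a,b]$. Since $C$ is open and contains $b$, I can fix $\delta>0$ so small that the closed ball $\{v\in X:d(v,b)\le\delta\}$ is contained in $C$.

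Finally I would run the separation argument for the arc $[a,v]$. Take any $v$ with $d(v,b)\le\delta$; then $v\in C$ while $a\notin C$. The arc $[a,v]$ is a connected subset of $X$ joining $a$ and $v$; were $w\notin[a,v]$, then $[a,v]$ would be a connected subset of $X\setminus\{w\}$ containing both $a$ and $v$, forcing $a$ and $v$ into the same component $C$, a contradiction. Hence $w\in[a,v]$, and then the subarc of $[a,v]$ from $a$ to $w$ is an arc from $a$ to $w$, which by uniqueness of arcs must equal $[a,w]$; therefore $[a,w]\subset[a,v]$, as required.

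The main obstacle is the structural claim that deleting the interior point $w$ separates $a$ from $b$, and the companion fact that any connected set joining points lying in distinct components of $X\setminus\{w\}$ must pass through $w$; both hinge entirely on the uniqueness of arcs in a dendrite. Once these are established, the remainder is a routine application of local connectedness to produce the radius $\delta$, and the passage from $w\in[a,v]$ to $[a,w]\subset[a,v]$ is again just arc uniqueness.
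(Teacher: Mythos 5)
Your proof is correct. Note, however, that the paper itself offers no proof of this statement: it is quoted verbatim from Mai--Shi (\cite{Ma2}, Lemma 2.2), so there is no internal argument to compare yours against; your write-up in effect supplies the missing proof. Your route --- show that the interior point $w$ separates $a$ from $b$ in $X$ (via uniqueness of arcs), take $\delta$ so the closed $\delta$-ball around $b$ lies in the open component $C$ of $X\setminus\{w\}$ containing $b$, and conclude that any arc $[a,v]$ with $v\in C$ must pass through $w$, hence contains $[a,w]$ by arc uniqueness --- is sound at every step: the separation claim, the openness of components (local connectedness), and the final subarc identification are all justified by facts the paper records (unique arcs, arcwise connectedness of connected subsets). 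An alternative, equally short argument uses Lemma \ref{lem8} instead of separation: choose $\eps < d(b,[a,w])$ and $\delta=\delta(\eps)$ as in that lemma; then $d(v,b)\leq\delta$ gives $\mathrm{diam}([v,b])<\eps$, so $[v,b]\cap[a,w]=\emptyset$, and since $[a,v]\cup[v,b]$ is connected and contains $a,b$ it contains $[a,b]\supset[a,w]$, forcing $[a,w]\subset[a,v]$. That variant leans on the metric estimate already imported from the same source, whereas yours is purely topological; both are valid, and yours has the merit of not presupposing Lemma \ref{lem8}.
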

\begin{lem}\rm{(\cite{Nagh}, Lemma 3.2)}\label{lem6}
 Let $f:X\rightarrow X$ be a monotone dendrite map. Let $a\in \textrm{Fix}(f)$ and $x\in X$.\ If $\omega_f(x)$ is infinite then for every $n\in \mathbb{N},\ [a,x]\cap [a,f^n(x)]=[a,u_n]$ where $u_n\in \textrm{Fix}(f^n)$.
\end{lem}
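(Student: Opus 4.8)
The plan is to reduce at once to the case $n=1$ and then to exploit the single structural fact that makes monotone dendrite maps so rigid: they carry arcs onto arcs and therefore preserve medians.

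\emph{Reduction.} For any two points $b,c$ of a dendrite the intersection $[a,b]\cap[a,c]$ is automatically an initial sub-arc $[a,m]$, where $m=m(a,b,c)$ is the \emph{median} of $a,b,c$, i.e. the unique point of $[a,b]\cap[b,c]\cap[a,c]$. Hence the point $u_n$ with $[a,x]\cap[a,f^n(x)]=[a,u_n]$ exists for purely topological reasons and equals $m(a,x,f^n(x))$; the whole content of the lemma is that it is fixed by $f^n$. Writing $g=f^n$, the map $g$ is again monotone and $a\in\textrm{Fix}(g)$, and since $\omega_{f^n}(x)=\omega_g(x)$ would be finite only if $\omega_f(x)$ were finite, $\omega_g(x)$ is infinite. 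So it suffices to prove: \emph{if $g$ is a monotone dendrite map with $g(a)=a$ and $\omega_g(x)$ infinite, then $u:=m(a,x,g(x))$ satisfies $g(u)=u$.}

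\emph{Arcs and medians.} First I would record that $g([b,c])=[g(b),g(c)]$ for all $b,c$: continuity gives a connected image containing $g(b),g(c)$, hence $g([b,c])\supseteq[g(b),g(c)]$, while monotonicity makes $g^{-1}([g(b),g(c)])$ connected and containing $b,c$, hence containing $[b,c]$, so $g([b,c])\subseteq[g(b),g(c)]$. Applying this to the three arcs joining a triple shows $g(m(b,c,d))\in[g(b),g(c)]\cap[g(c),g(d)]\cap[g(b),g(d)]=\{m(g(b),g(c),g(d))\}$, i.e. $g$ preserves medians. Using $g(a)=a$ this yields $g(u)=m(a,g(x),g^2(x))$ and, inductively, $g^k(u)=m(a,g^k(x),g^{k+1}(x))=:m_k$; thus the orbit of $u$ is exactly the sequence of divergence points of consecutive orbit points of $x$.

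\emph{A monotone orbit.} Both $m_k$ and $m_{k+1}$ lie on the arc $[a,g^{k+1}(x)]$, on which the relation $z\preceq w\iff z\in[a,w]$ is a total order, so they are comparable; and since $g([a,w])=[a,g(w)]$, the map $g$ is order preserving for $\preceq$. Hence from the comparability of $m_0=u$ and $m_1=g(u)$ the whole orbit $(m_k)=(g^k(u))$ is a $\preceq$-monotone chain, corresponding to a nested family of arcs $[a,m_k]$, and therefore converges to a point $p$ with $g(p)=p$ and $\omega_g(u)=\{p\}$. It then remains to prove that this chain is constant, i.e. $u=p$. Passing to a subsequence with $g^{k_i}(x)\to y\in\omega_g(x)$ and using continuity of the median together with $m_{k_i}\to p$ gives $m(a,y,g(y))=p$ for every $y\in\omega_g(x)$; thus $p\preceq y$ for all such $y$, and consecutive points $y,g(y)$ of $\omega_g(x)$ diverge exactly at $p$. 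Assuming $u\ne p$, the chain is strictly monotone, so either $u\prec p$ or $p\prec u$; to finish I would use Lemma \ref{lem5} and Lemma \ref{lem8} to control the arcs $[a,g^k(x)]$ as $g^k(x)$ approaches the infinite set $\omega_g(x)$, showing that strict monotonicity cannot persist: either the orbit of $x$ is forced to converge to $p$, making $\omega_g(x)=\{p\}$ finite, or the divergence points $m_k$ are eventually pushed past $p$, contradicting the strict monotonicity of $(m_k)$. The hard part will be exactly this last step — converting the infiniteness of $\omega_g(x)$, through the semicontinuity of the arc map, into the rigidity $u=p$; everything preceding it is a formal consequence of the arc-to-arc and median-preserving properties established above, and once $u=p$ is known we conclude $u_n=p\in\textrm{Fix}(g)=\textrm{Fix}(f^n)$.
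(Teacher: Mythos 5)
Your preamble is sound: the reduction to $g=f^n$, the identity $g([b,c])=[g(b),g(c)]$, the median--preservation formula $g(m(b,c,d))=m(g(b),g(c),g(d))$, the resulting fact that $m_k:=g^k(u)=m(a,g^k(x),g^{k+1}(x))$ is a $\preceq$-monotone chain converging to some $p\in \textrm{Fix}(g)$, and the identity $[a,y]\cap[a,g(y)]=[a,p]$ for every $y\in\omega_g(x)$ are all correct. But the proof stops exactly where the lemma begins: everything you established is a formal consequence of monotonicity, and the statement to be proved is precisely $u=p$, which you explicitly defer as ``the hard part'' to be handled by some unspecified use of Lemmas \ref{lem5} and \ref{lem8}. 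That is a genuine gap, not a loose end. Concretely: (a) your claim that $u\neq p$ makes the chain \emph{strictly} monotone is unjustified --- $g$ preserves $\preceq$ but need not be injective, so $m_0\prec m_1$ only gives $m_1\preceq m_2$, and the chain can be non-constant yet reach $p$ in finitely many steps (e.g.\ $g(u)=p$), a sub-case your dichotomy never sees; (b) the two halves of your concluding alternative require genuinely different arguments. In the increasing case $u\prec p$ a contradiction can indeed be extracted, but only after first proving the extra branching identity $[a,g^k(x)]\cap[a,p]=[a,m_k]$ and then playing it against Lemma \ref{lem5} applied at some $y\in\omega_g(x)\setminus\{p\}$, which forces $[a,g^k(x)]\supseteq[a,p]$ for infinitely many $k$; none of this is in your text. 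In the decreasing case $p\prec u$ the geometry is different: the points $m_k$ approach $p$ along the branch containing $u$, while $\omega_g(x)$ may sit on entirely different branches at $p$, so neither ``the orbit of $x$ converges to $p$'' nor ``the $m_k$ are pushed past $p$'' follows from what you have; ruling this case out appears to need monotonicity in its full strength (connectedness of preimages of points, as in the paper's Lemma \ref{lem-2}), not merely the arc-image and order-preserving consequences you extracted.

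For comparison: the paper itself contains no proof of this statement --- it is imported verbatim from \cite{Nagh} (Lemma 3.2), whose argument runs through the companion facts quoted here as Lemma \ref{lem-2}. So your proposal cannot be matched against an internal proof; judged on its own it is a correct and well-organized reduction plus an acknowledged hole at the one step that carries the mathematical content, and therefore it is not yet a proof.
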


\begin{lem}\rm{(\cite{Nagh2}, Lemma 2.2)}\label{lem7}
Let $X$ be a dendrite and $(p_{n})_{n\in\mathbb{N}}$ be a sequence of $X$ such that $p_{n+1}\in(p_{n},p_{n+2})$ for all $n\in\mathbb{N}$, and $\underset{n\to +\infty}\lim p_{n}=p_{\infty}$. Let $U_{n}$ be the connected component of $D\setminus\{p_{n},p_{\infty}\}$ that
contains the open arc $(p_{n},p_{\infty})$. Then $\underset{n\to
+\infty}\lim \mathrm{diam}(U_{n})=0$.
\end{lem}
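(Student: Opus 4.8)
The plan is to exploit the monotone first-point retraction of $X$ onto the arc that carries the sequence, together with the uniform-smallness-of-arcs property recorded in Lemma \ref{lem8}. First I would record the elementary structure: since $p_{n+1}\in(p_n,p_{n+2})$ for every $n$, the points $p_n$ lie in linear order along the arc $A:=[p_1,p_\infty]$ and $[p_{n+1},p_\infty]\subsetneq[p_n,p_\infty]$, so the sets $U_n$ form a nested decreasing family and the numbers $\mathrm{diam}(U_n)$ are nonincreasing; moreover $p_\infty\in\overline{U_n}$ for every $n$. Let $r:X\to A$ be the first-point map, i.e. $r(x)$ is the unique point of $A$ with $[x,r(x)]\cap A=\{r(x)\}$; this map is continuous. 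I would then check the identity $U_n=r^{-1}((p_n,p_\infty))$: a point $x$ lies in the component of $X\setminus\{p_n,p_\infty\}$ meeting $(p_n,p_\infty)$ precisely when the arc joining $x$ to $A$ lands strictly between $p_n$ and $p_\infty$, since removing a single cut point disconnects a dendrite.

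Next I argue by contradiction. If $\mathrm{diam}(U_n)\not\to0$, monotonicity gives $\mathrm{diam}(U_n)\geq c$ for some $c>0$ and all $n$. Choosing two points of $U_n$ at distance $>c/2$ and applying the triangle inequality at $p_\infty\in\overline{U_n}$, I obtain $x_n\in U_n$ with $d(x_n,p_\infty)>c/4$. By compactness I pass to a subsequence $x_n\to x^\ast$, so $d(x^\ast,p_\infty)\geq c/4$ and in particular $x^\ast\neq p_\infty$. Writing $q_n:=r(x_n)\in(p_n,p_\infty)$, Lemma \ref{lem8} applied to $p_n\to p_\infty$ forces $\mathrm{diam}([p_n,p_\infty])\to0$, hence $q_n\to p_\infty$; continuity of $r$ then yields $r(x^\ast)=p_\infty$.

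The crux is the following arc computation. Because $r(x^\ast)=p_\infty$, the whole arc $[x^\ast,p_\infty]$ lies in the fiber $r^{-1}(p_\infty)$ and meets $A$ only at $p_\infty$, whereas $[x_n,p_\infty]=[x_n,q_n]\cup[q_n,p_\infty]$ reaches $p_\infty$ along $A$ and meets $r^{-1}(p_\infty)$ only at $p_\infty$; consequently $[x_n,p_\infty]\cap[x^\ast,p_\infty]=\{p_\infty\}$. In a dendrite this says that $p_\infty$ is the point where the three arcs joining $x_n,x^\ast,p_\infty$ meet, so $p_\infty\in[x_n,x^\ast]$ and therefore $[x^\ast,p_\infty]\subset[x_n,x^\ast]$. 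Hence $\mathrm{diam}([x_n,x^\ast])\geq d(x^\ast,p_\infty)\geq c/4>0$ for every $n$, which contradicts Lemma \ref{lem8}, since $d(x_n,x^\ast)\to0$ forces $\mathrm{diam}([x_n,x^\ast])\to0$. This contradiction proves $\mathrm{diam}(U_n)\to0$.

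I expect the main obstacle to be the careful justification that the two arcs meet only at $p_\infty$, equivalently that the fibers of the retraction separate the approach to $p_\infty$ \emph{along} $A$ from the \emph{transverse} approach of $x^\ast$. Everything else is bookkeeping with nested arcs and a single application of Lemma \ref{lem8}; the retraction identity $U_n=r^{-1}((p_n,p_\infty))$ and the fiber description $[x^\ast,p_\infty]\subset r^{-1}(p_\infty)$ are the points needing the most care, and should one prefer to avoid the retraction altogether they can each be replaced by a direct separation argument using that a dendrite is disconnected by any one of its cut points.
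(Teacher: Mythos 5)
Your proof is correct, but there is nothing in this paper to compare it against: Lemma \ref{lem7} is stated as an imported result, cited from \cite{Nagh2} (Lemma 2.2), and no proof of it appears here. Judged on its own merits, your argument is sound. The structural facts you assert do hold: the betweenness condition $p_{n+1}\in(p_n,p_{n+2})$ together with $p_n\to p_\infty$ places all the $p_n$ in order on $A=[p_1,p_\infty]$ (this needs a short argument using that components of $X\setminus\{p_n\}$ are open, hence eventually contain the tail of the sequence and therefore $p_\infty$), and the identification $U_n=r^{-1}\bigl((p_n,p_\infty)\bigr)$ is legitimate, since that preimage is open, connected, and avoids $\{p_n,p_\infty\}$, while conversely any $x\in U_n$ must have $r(x)\in(p_n,p_\infty)$, as otherwise the arc in $U_n$ from $x$ to $(p_n,p_\infty)$ would contain $p_n$. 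The crux also checks out: $r(x^\ast)=p_\infty$ gives $[x^\ast,p_\infty]\subset r^{-1}(p_\infty)$, while $[x_n,p_\infty]=[x_n,q_n]\cup[q_n,p_\infty]$ meets $r^{-1}(p_\infty)$ only at $p_\infty$, so the two arcs intersect exactly in $\{p_\infty\}$, whence $p_\infty\in[x_n,x^\ast]$ by the tripod property of dendrites, contradicting Lemma \ref{lem8} as $d(x_n,x^\ast)\to 0$. A pleasant feature of your route is that it never presupposes $\bigcap_n\overline{U_n}=\{p_\infty\}$ (which is essentially the conclusion) and uses only Lemma \ref{lem8} and the first-point retraction (\cite{Nadler}, Lemma 10.25), both already available in the paper.
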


\begin{lem}\label{lem1}\rm{(\cite{Nagh}, Lemma 2.8)}
Let $(X,d)$ be a dendrite and $f:X\rightarrow X$ a monotone dendrite map. Then for any $x,y\in X,\ f([x,y])=[f(x),f(y)]$.
\end{lem}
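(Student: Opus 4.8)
The plan is to prove the equality $f([x,y])=[f(x),f(y)]$ by establishing the two inclusions separately; one will rest on continuity alone and the other on monotonicity.

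Before starting, I would record one elementary property of dendrites that will be used twice: if $C$ is a connected subset of a dendrite and $a,b\in C$, then $[a,b]\subseteq C$. Indeed, $C$ is arcwise connected (\cite{Nadler}, Proposition 10.9), so it contains an arc joining $a$ to $b$; since the arc between two points of a dendrite is unique, this arc coincides with $[a,b]$, whence $[a,b]\subseteq C$. When $a=b$ both sides reduce to the singleton $\{a\}$, so the statement is trivially true there as well.

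For the inclusion $[f(x),f(y)]\subseteq f([x,y])$, I would note that $[x,y]$ is connected and $f$ is continuous, so $f([x,y])$ is a connected subset of $X$; it plainly contains $f(x)$ and $f(y)$. Applying the property above with $C=f([x,y])$, $a=f(x)$, $b=f(y)$ yields $[f(x),f(y)]\subseteq f([x,y])$. Observe that this direction never uses monotonicity. For the reverse inclusion $f([x,y])\subseteq [f(x),f(y)]$, I would bring in monotonicity. The arc $[f(x),f(y)]$ is connected, hence $f^{-1}([f(x),f(y)])$ is connected by hypothesis. Since $f(x),f(y)\in [f(x),f(y)]$, both $x$ and $y$ lie in this preimage, and applying the dendrite property once more, now with $C=f^{-1}([f(x),f(y)])$, $a=x$, $b=y$, gives $[x,y]\subseteq f^{-1}([f(x),f(y)])$, which is exactly $f([x,y])\subseteq [f(x),f(y)]$. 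Combining the two inclusions completes the proof.

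There is no genuinely hard step here; the argument is short. The only points requiring care are the degenerate cases $x=y$ and $f(x)=f(y)$, in which $[x,y]$ or $[f(x),f(y)]$ collapses to a single point — but singletons are connected, so both inclusions remain valid and no separate treatment is needed. The one idea worth isolating is the ``pull back the target arc'' move used for the second inclusion: feeding the connected set $[f(x),f(y)]$ through the monotonicity hypothesis is precisely what forces the image of $[x,y]$ to remain inside it, and it is the only place where the monotone assumption is genuinely used.
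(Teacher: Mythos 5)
Your proof is correct. The paper does not prove this lemma itself --- it only cites it as Lemma 2.8 of \cite{Nagh} --- and your two-inclusion argument (continuity plus arcwise connectedness and uniqueness of arcs in a dendrite gives $[f(x),f(y)]\subseteq f([x,y])$; monotonicity applied to the connected set $[f(x),f(y)]$ pulls the arc back to give $f([x,y])\subseteq [f(x),f(y)]$) is precisely the standard proof of this fact, so there is nothing to add.
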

\begin{lem}\label{lem-2} Let $(X,d)$ be a dendrite and $f:X\rightarrow X$ a monotone dendrite map. Let $a\in \textrm{Fix}(f)$ and $x\in X$. If for some $n\in \mathbb{N}$, we have either  $f^n(x)\in [a,x]$ or $x\in [a,f^n(x)]$, then $\omega_f(x)$ is finite.
\end{lem}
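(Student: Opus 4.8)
The plan is to argue by contradiction, using Lemma~\ref{lem6} as the main engine and the introductory fact that finiteness of $\omega_{f^n}(x)$ for some $n$ forces finiteness of $\omega_f(x)$. Suppose $\omega_f(x)$ is infinite. If $x=a$ then $x$ is fixed and $\omega_f(x)=\{a\}$ is finite, so I may assume $x\neq a$. Since $f^n$ is monotone and $a\in\mathrm{Fix}(f^n)$, Lemma~\ref{lem6} applies to the given $n$ and yields $[a,x]\cap[a,f^n(x)]=[a,u_n]$ for some $u_n\in\mathrm{Fix}(f^n)$. The key observation is that each of the two hypotheses is exactly one of the two ways the arcs $[a,x]$ and $[a,f^n(x)]$ can be nested, so that this intersection collapses to the smaller arc and pins $u_n$ to an endpoint.

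First I would treat the case $f^n(x)\in[a,x]$. Then $[a,f^n(x)]\subseteq[a,x]$, so $[a,x]\cap[a,f^n(x)]=[a,f^n(x)]$, and comparing with Lemma~\ref{lem6} gives $f^n(x)=u_n\in\mathrm{Fix}(f^n)$. Hence $f^{2n}(x)=f^n(x)$, the $f^n$-orbit of $x$ is eventually constant, and $\omega_{f^n}(x)=\{f^n(x)\}$ is finite. In the remaining case $x\in[a,f^n(x)]$ one has $[a,x]\subseteq[a,f^n(x)]$, so the intersection equals $[a,x]$, and Lemma~\ref{lem6} forces $x=u_n\in\mathrm{Fix}(f^n)$; thus $x$ is periodic and $\omega_f(x)=O_f(x)$ is finite. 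In either case $\omega_f(x)$ turns out to be finite (via the reduction from $\omega_{f^n}$ to $\omega_f$ in the first case), contradicting the standing assumption, and the lemma follows.

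The substance of the argument lives entirely in Lemma~\ref{lem6}; what remains is the elementary but decisive point that the two containment hypotheses are precisely the two nestings of $[a,x]$ and $[a,f^n(x)]$, each collapsing the intersection to a full arc whose far endpoint must therefore coincide with $u_n$. I do not anticipate a genuine obstacle here: once the nesting is recognized, matching the collapsed intersection against $[a,u_n]$ is immediate, and the passage from $\omega_{f^n}$ to $\omega_f$ is the standard fact already quoted. The only care needed is to dispose of the degenerate possibility $x=a$ at the outset (the symmetric degeneracy $f^n(x)=a$ is absorbed into the first case, since then $f^n(x)\in[a,x]$ and $f^n(x)=a\in\mathrm{Fix}(f^n)$ directly), ensuring that the arc notation of Lemma~\ref{lem6} is meaningful throughout.
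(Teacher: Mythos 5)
Your proof is correct, but it follows a genuinely different route from the paper's. The paper argues directly, with no contradiction: when $f^n(x)\in[a,x]$, Lemma~\ref{lem1} gives $f^n([a,x])=[a,f^n(x)]\subseteq[a,x]$, so $f^n$ restricts to a monotone self-map of the arc $[a,x]$ and Lemma~\ref{lem0} makes $\omega_{f^n}(x)$ finite; when $x\in[a,f^n(x)]$, the paper invokes an external result (Lemma 2.10 of \cite{Nagh}) asserting $\omega_{f^n}(x)=\{b\}\subset\textrm{Fix}(f^n)$. You instead assume $\omega_f(x)$ infinite so as to apply Lemma~\ref{lem6}, and observe that either containment hypothesis collapses $[a,x]\cap[a,f^n(x)]=[a,u_n]$ to a full arc, pinning $u_n$ to $f^n(x)$ (resp.\ to $x$) and forcing $f^{2n}(x)=f^n(x)$ (resp.\ periodicity of $x$), hence finiteness of $\omega_{f^n}(x)$ and, by the standard fact quoted in the introduction, of $\omega_f(x)$ --- a contradiction. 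What your route buys: both cases are handled uniformly by a single lemma, and that lemma is actually stated in the paper, whereas the paper's second case leans on a result of \cite{Nagh} not reproduced here. What it costs: the argument must be framed as a contradiction, since Lemma~\ref{lem6} only applies when $\omega_f(x)$ is infinite, so it is less direct than the paper's. Both proofs are complete; yours even yields the slightly sharper conclusion that $f^n(x)$ (resp.\ $x$) is itself a fixed point of $f^n$, while the paper's first case instead records the interval-dynamics picture of a fixed point or a $2$-cycle.
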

\begin{proof}
By Lemma \ref{lem1}, $f^n([a,x])=[a,f^n(x)]$. So, if $f^n(x)\in [a,x]$ then by Lemma \ref{lem0}, $\omega_f(x)$ is finite. If $x\in [a,f^n(x)]$. From Lemma 2.10 in \cite{Nagh},  $\omega_{f^n}(x)=\{b\}\subset \textrm{Fix}(f^n)$ and so $\omega_f(x)$ is finite.
\end{proof}
\begin{thm}\label{th1}\rm{(\cite{Nagh}, Theorem 1.2)} Let $f:X\rightarrow X$ be a monotone dendrite map. Then for any $x\in X$, we have:
\begin{itemize}
  \item[$(i)$] $\omega_f(x)$ is a minimal set.
  \item[$(ii)$] $\omega_f(x)\subset \overline{\textrm{P}(f)}$.
\end{itemize}
\end{thm}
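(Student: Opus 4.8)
The plan is to dispose of the finite case immediately and then, in the infinite case, to manufacture periodic points from the periodic branch points provided by Lemma \ref{lem6}. First, if $\omega_f(x)$ is finite then, as recalled in the introduction, it is a periodic orbit; a periodic orbit is minimal and consists of periodic points, so both $(i)$ and $(ii)$ hold at once. From now on I assume $\omega_f(x)$ is infinite and fix $a\in\mathrm{Fix}(f)$, which is nonempty since every dendrite has the fixed point property.

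For $(ii)$ I would proceed as follows. Fix $y\in\omega_f(x)$ and $\varepsilon>0$, and take $\delta=\delta(\varepsilon)$ from Lemma \ref{lem8}. Because $y\in\omega_f(x)$, the orbit of $x$ meets $B(y,\delta/2)$ infinitely often, so I may choose return times $n<m$ with $d(f^n(x),y)<\min(\varepsilon,\delta/2)$ and $d(f^m(x),y)<\delta/2$; then $d(f^n(x),f^m(x))<\delta$, whence $\mathrm{diam}([f^n(x),f^m(x)])<\varepsilon$ by Lemma \ref{lem8}. Since $\omega_f(f^n(x))=\omega_f(x)$ is infinite, Lemma \ref{lem6} applied to the point $f^n(x)$ and the index $m-n$ yields $[a,f^n(x)]\cap[a,f^m(x)]=[a,w]$ with $w\in\mathrm{Fix}(f^{m-n})$, so $w$ is periodic. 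The equality $[a,f^n(x)]\cap[a,f^m(x)]=[a,w]$ says that $w$ is the median point of $a,f^n(x),f^m(x)$, hence $w\in[f^n(x),f^m(x)]$ and $d(w,f^n(x))\le\mathrm{diam}([f^n(x),f^m(x)])<\varepsilon$, giving $d(w,y)<2\varepsilon$. As $\varepsilon$ was arbitrary, $y$ is a limit of periodic points, which is exactly $(ii)$.

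For the minimality in $(i)$ the target is to prove $\omega_f(x)\subseteq\omega_f(y)$ for every $y\in\omega_f(x)$; together with the reverse inclusion $\omega_f(y)\subseteq\omega_f(x)$ (automatic from strong invariance and closedness of $\omega_f(x)$) this gives $\omega_f(y)=\omega_f(x)$ for all $y\in\omega_f(x)$, i.e. minimality. I would use the periodic points $w$ built above as a skeleton: for $w$ chosen $\varepsilon$-close to $y$ with period $q\mid(m-n)$, Lemma \ref{lem1} gives $f^{j}([w,y])=[f^{j}(w),f^{j}(y)]$ for all $j$, so that the forward orbit of $y$ shadows the periodic orbit $O_f(w)$. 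One then shows, on the one hand, that $O_f(w)$ becomes $\varepsilon$-dense in $\omega_f(x)$ as $\varepsilon\to 0$, and on the other hand that the shadowing error stays controlled over a full period; combining these makes $O_f(y)$ dense in $\omega_f(x)$, hence $\omega_f(x)\subseteq\overline{O_f(y)}=\omega_f(y)$.

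The hard part is the diameter control underlying the shadowing. A monotone map need not be non-expanding, so it is not automatic that the arcs $[f^{j}(w),f^{j}(y)]$ stay small as $j$ runs over a period, and it is precisely here that the rigidity of the monotone structure must be extracted. I expect to arrange the relevant branch/return points into a sequence $(p_k)$ with $p_{k+1}\in(p_k,p_{k+2})$ so that the component-diameter estimate of Lemma \ref{lem7} applies, to use Lemma \ref{lem1} to keep images of arcs under control, and to invoke Lemma \ref{lem-2} to rule out the degenerate configurations in which the orbit collapses onto the spine $[a,x]$. This step also carries the burden of excluding the a priori possibility that $\omega_f(x)$ splits into several minimal pieces, which is the genuinely nontrivial content of $(i)$; once the estimates are in place, minimality and $(ii)$ follow as above.
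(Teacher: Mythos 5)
A preliminary remark on the comparison you were asked for: the paper never proves this theorem; it is imported verbatim from (\cite{Nagh}, Theorem 1.2) and used as a black box throughout, so your attempt has to stand entirely on its own. Judged that way, the finite case and part $(ii)$ are correct. Applying Lemma \ref{lem6} to the point $f^{n}(x)$ (legitimate, since $\omega_f(f^{n}(x))=\omega_f(x)$ is infinite) with exponent $m-n$ does give $[a,f^{n}(x)]\cap[a,f^{m}(x)]=[a,w]$ with $w\in\textrm{Fix}(f^{m-n})$, and $w$ is indeed the median of $a$, $f^{n}(x)$, $f^{m}(x)$: the two arcs issuing from $a$ separate at $w$, so $[f^{n}(x),f^{m}(x)]=[f^{n}(x),w]\cup[w,f^{m}(x)]$ and $w$ lies on the arc $[f^{n}(x),f^{m}(x)]$, whose diameter is less than $\varepsilon$ by the choice of $\delta$ in Lemma \ref{lem8}. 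The bound $d(w,y)<2\varepsilon$ follows, hence $\omega_f(x)\subset\overline{\textrm{P}(f)}$. This is a clean, self-contained argument for $(ii)$.

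Part $(i)$, which is the substantive content of the theorem, is not proved: what you give is a plan whose two load-bearing steps are both missing. First, the shadowing control: you need $d(f^{j}(y),f^{j}(w))$ to stay small over a full period of $w$, and, as you yourself concede, monotonicity does not make $f$ non-expanding; none of the available lemmas (\ref{lem0}, \ref{lem8}, \ref{lem5}, \ref{lem6}, \ref{lem7}, \ref{lem1}, \ref{lem-2}) supplies such control, and the phrases ``I expect to arrange'' and ``once the estimates are in place'' mark exactly where the proof stops. Second, the claimed $\varepsilon$-density of $O_f(w)$ in $\omega_f(x)$: your construction places $w$ near the single point $y$ and says nothing about where the rest of its orbit goes; in this paper that density statement is essentially Theorem \ref{th3}, whose proof invokes the minimality of $M$ --- i.e.\ part $(i)$ --- repeatedly, so appealing to anything of that kind here is circular. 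Note also that the actual proof in \cite{Nagh} runs along a different line: it produces, for each $x$, a point of $\omega_f(x)$ asymptotic to $x$ (this is precisely what Lemma \ref{lem13} of the present paper quotes from that proof) and derives minimality from there. Your median-point construction is a genuinely nice ingredient, but as it stands it proves only $(ii)$.
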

\begin{lem}\label{lem17}Let $(X,d)$ be a compact metric space and $f:X\rightarrow X$ a continuous map. If $x\in \textrm{RR}(f)$ and $y\in \textrm{R}(f)$ such that $(x,y)$ is an asymptotic pair then $x=y$.
\end{lem}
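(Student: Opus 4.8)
The plan is to argue by contradiction: assuming $x\neq y$, set $\eps=\tfrac{1}{3}d(x,y)>0$ and try to produce a single large time $m$ at which all three of $d(x,f^m(x))$, $d(f^m(x),f^m(y))$ and $d(f^m(y),y)$ are $<\eps$; the triangle inequality then yields $d(x,y)<3\eps=d(x,y)$, a contradiction. The asymptotic hypothesis takes care of the middle term for all large $m$, and the regular recurrence of $x$ supplies an $N$ (depending on $\eps$) with $d(x,f^{kN}(x))<\eps$ for every $k$, which handles the first term whenever $m$ is a multiple of $N$. Thus everything reduces to finding arbitrarily large multiples $m=kN$ at which $y$ also returns to within $\eps$ of itself; equivalently, to showing that $y$ is recurrent for $f^{N}$.

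The first step is to pin down where $y$ lives. From the asymptotic relation $d(f^n(x),f^n(y))\to 0$ one checks directly that $\omega_f(x)=\omega_f(y)=:\Omega$: any subsequential limit of $(f^{n}(x))$ is also a limit of $(f^{n}(y))$ and conversely. Since $x\in\textrm{RR}(f)$, the fact quoted before the statement (regular recurrence forces $\omega_f(x)$ to be minimal) shows $\Omega$ is a minimal set, and as both $x$ and $y$ are recurrent we get $x,y\in\Omega$. So $y$ is a point of the $f$-minimal set $\Omega$.

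The main work — and the step I expect to be the real obstacle — is to deduce from ``$y$ lies in an $f$-minimal set'' that $y$ is recurrent for $g:=f^{N}$, since recurrence for $f$ does not transfer to powers for free. I would do this by locating $y$ inside a $g$-minimal set. Choose, by Zorn's lemma, a $g$-minimal subset $K\subseteq\Omega$ and a point $w\in K$. By minimality of $\Omega$ the orbit $O_f(w)$ is dense, so $f^{m_i}(w)\to y$ for some $m_i\to\infty$; passing to a subsequence we may assume $m_i\equiv s\ (\mathrm{mod}\ N)$ for a fixed $s$, whence $f^{m_i}(w)=f^{s}\big(g^{t_i}(w)\big)$ with $g^{t_i}(w)\in K$. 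Compactness of $K$ lets us further assume $g^{t_i}(w)\to w^{*}\in K$, and continuity gives $y=f^{s}(w^{*})\in f^{s}(K)$. Finally $f^{s}(K)$ is a factor of the $g$-minimal system $(K,g)$ under the map $f^{s}$, which commutes with $g$, hence $f^{s}(K)$ is itself $g$-minimal; therefore $y$ belongs to a $g$-minimal set and is in particular $g$-recurrent, i.e.\ $f^{N}$-recurrent. This produces infinitely many $k$ with $d(f^{kN}(y),y)<\eps$, and choosing one with $kN$ large enough completes the three-$\eps$ estimate and the desired contradiction.
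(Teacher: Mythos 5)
Your proof is correct, and its skeleton coincides with the paper's: fix a scale determined by $d(x,y)$, use the regular recurrence of $x$ to produce $N$ with $f^{kN}(x)$ close to $x$ for \emph{all} $k$, use asymptoticity to drag $f^{kN}(y)$ along for large $k$, and then contradict recurrence of $y$ under $f^{N}$ (the paper phrases the contradiction as $\omega_{f^{N}}(y)\subset\overline{B(x,\varepsilon)}$ while $y\notin\overline{B(x,\varepsilon)}$, rather than via your triangle inequality at a single time; that difference is cosmetic). Where you genuinely diverge is at the crux, namely the claim $y\in \textrm{R}(f^{N})$. The paper disposes of it by simply writing $\textrm{R}(f)=\textrm{R}(f^{N})$, i.e.\ invoking the classical but nontrivial theorem that recurrence passes to powers of the map. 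You instead prove the instance you need from the hypotheses at hand: asymptoticity forces $\omega_f(y)=\omega_f(x)=:\Omega$, regular recurrence of $x$ makes $\Omega$ minimal, recurrence of $y$ puts $y\in\Omega$, and your Zorn/commuting-image argument (the image $f^{s}(K)$ of an $f^{N}$-minimal set $K$ under the commuting map $f^{s}$ is again $f^{N}$-minimal) places $y$ in an $f^{N}$-minimal set, hence in $\textrm{R}(f^{N})$. This costs more machinery but makes the lemma self-contained: in effect you have reproved the relevant special case of $\textrm{R}(f)\subset\textrm{R}(f^{N})$ for recurrent points lying in minimal sets, whereas the paper's three-line proof leans on that theorem as known. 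One small precision: the step ``$O_f(w)$ is dense, so $f^{m_i}(w)\to y$ with $m_i\to\infty$'' is best justified by noting $\omega_f(w)$ is a nonempty closed invariant subset of the minimal set $\Omega$, hence equals $\Omega\ni y$; plain density of the orbit would not by itself force the times to be unbounded if $y$ were an isolated orbit point, but minimality gives the stronger statement at no extra cost.
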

\begin{proof}
Suppose that $x\neq y$. Then there exists $\eps > 0$ such that $y\notin \overline{B(x,\eps)}$. As $x\in \textrm{RR}(f)$, there exists $N\in \mathbb{N}$ such that $f^{kN}(x)\in B(x,\frac{\eps}{2}),\ \forall k\in \mathbb{N}$ and by the fact that $(x,y)$ is an asymptotic pair, $f^{kN}(y)\in B(x,\eps)$ for $k$ large enough. So $\omega_{f^N}(y)\subset \overline{B(x,\eps)}$, a contradiction with the fact that  $y\in \textrm{R}(f)=R(f^N)$.

\end{proof}
\section{\bf More emphasis of the dynamic monotone dendrite maps}

\begin{lem}\label{lem11} Let $f:X\rightarrow X$ be a monotone dendrite map and $T$ is a subtree of $X$ with $k$ endpoints. Then we have the following assertions:
\begin{itemize}
\item[(i)] For all $e\in \textrm{End}(f(T))$, there exists $e'\in \textrm{End}(T)$ such that $f(e')=e$.
\item[(ii)] $f(T)$ is a subtree of $X$ with $f(\textrm{End}(T))\supset \textrm{End}(f(T))$.
\end{itemize}
\end{lem}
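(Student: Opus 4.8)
The plan is to reduce both assertions to a single structural identity, namely that a subtree coincides with the convex hull of its endpoints, and then transport this identity through $f$ by means of Lemma \ref{lem1}. First I would record the fact that if $T$ is a subtree with $\textrm{End}(T)=\{e_1,\dots,e_k\}$, then $T=[\textrm{End}(T)]=\bigcup_{1\le i\neq j\le k}[e_i,e_j]$. The inclusion $\supseteq$ is immediate, since a sub-continuum of a dendrite is convex (it contains the unique arc joining any two of its points), so each $[e_i,e_j]\subseteq T$. For $\subseteq$, take $x\in T$ with $x\notin\textrm{End}(T)$, so $\mathrm{ord}_T(x)\ge 2$ and $T\setminus\{x\}$ has at least two components. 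Each component $C$ is open (local connectedness), and $\overline{C}=C\cup\{x\}$ is a non-degenerate subtree, hence has an endpoint $y\neq x$; since $C$ is open, $\overline C$ and $T$ agree near $y$, giving $y\in\textrm{End}(T)\cap C$. Choosing endpoints of $T$ lying in two distinct components of $T\setminus\{x\}$, the point $x$ separates them and therefore lies on the arc joining them, so $x\in[\textrm{End}(T)]$.

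Next I would apply $f$ to this description and invoke Lemma \ref{lem1} arc by arc, obtaining
\[
f(T)=\bigcup_{i\neq j}f([e_i,e_j])=\bigcup_{i\neq j}[f(e_i),f(e_j)]=[f(\textrm{End}(T))].
\]
Thus $f(T)$ is exactly the convex hull of the finite set $F:=f(\textrm{End}(T))$, which has at most $k$ points. (The degenerate case in which $T$, and hence $f(T)$, is a single point is trivial: then $\textrm{End}(f(T))=f(T)=f(\textrm{End}(T))$.)

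It then remains to establish the auxiliary fact that the convex hull $[F]$ of a finite set $F$ in a dendrite is a subtree with $\textrm{End}([F])\subseteq F$. Being a finite union of arcs all meeting through points of $F$, the set $[F]$ is compact and arcwise connected, hence a sub-continuum, hence a dendrite. Suppose $e\in[F]$ is an endpoint with $e\notin F$; then $e\in(p,q)$ for some $p,q\in F$, and since $[p,q]$ is the unique arc joining $p$ and $q$ inside the dendrite $[F]$, the point $e$ separates $p$ from $q$ there, forcing $\mathrm{ord}_{[F]}(e)\ge 2$ and contradicting that $e$ is an endpoint. Hence $\textrm{End}([F])\subseteq F$ is finite, so $[F]=f(T)$ is a subtree and $\textrm{End}(f(T))\subseteq f(\textrm{End}(T))$. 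This is precisely assertion (ii), and assertion (i) is merely its reformulation: any $e\in\textrm{End}(f(T))$ lies in $f(\textrm{End}(T))$, so $e=f(e')$ for some $e'\in\textrm{End}(T)$.

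I expect the main obstacle to be purely the two dendrite-structure facts, namely $T=[\textrm{End}(T)]$ and the location of endpoints of a convex hull of finitely many points; once these are in place, monotonicity enters the argument only through the identity $f([x,y])=[f(x),f(y)]$ of Lemma \ref{lem1}, and everything else is combinatorial. In particular, the whole point is that monotonicity prevents $f$ from creating new endpoints, which is exactly what the inclusion $\textrm{End}(f(T))\subseteq f(\textrm{End}(T))$ encodes.
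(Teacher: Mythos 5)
Your proof is correct and is essentially the paper's own argument: both hinge on decomposing $T$ into the arcs joining its endpoints, applying Lemma \ref{lem1} arc by arc, and noting that an endpoint of $f(T)$ cannot lie in the interior of an arc contained in $f(T)$. The only difference is packaging and direction --- the paper lifts a given $e\in\textrm{End}(f(T))$ to a preimage $e_{-1}\in T$, places it on an arc $[a,b]$ with $a,b\in\textrm{End}(T)$, and pushes forward to get $e\in[f(a),f(b)]$, whereas you push the whole decomposition forward to the identity $f(T)=[f(\textrm{End}(T))]$, supplying along the way proofs of the two structural facts ($T=[\textrm{End}(T)]$ and $\textrm{End}([F])\subseteq F$ for finite $F$) that the paper simply takes as known.
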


\begin{proof}
(i) Let $e\in \textrm{End}(f(T))$ and  $e_{-1}\in f^{-1}(\{e\})\cap T$. Take $a,b\in \textrm{End}(T)$ such that $e_{-1}\in [a,b]$. So, $e\in f([a,b])=[f(a),f(b)]\subset f(T)$. Thus, $e$ is either $f(a)$ or $f(b)$.\\
(ii) We have $f(T)$ a sub-dendrite of $X$. By assertion (i), $f(\textrm{End}(T))\supset \textrm{End}(f(T))$ so $\textrm{End}(f(T))$ is finite with at most $k$ points where $k$ is the number of endpoints of $T$.
\end{proof}
\begin{lem}\label{lem4}Let $f:X\rightarrow X$ be a monotone dendrite map. If $M$ is an $\omega$-limit set of $f$ and $D$ is the the convex hull of $M$. Then we have the following assertions.
\begin{itemize}
\item[(i)] $D$ is strongly invariant.
\item[(ii)] $f_{\mid D}:D\to D$ is monotone.
\end{itemize}
\end{lem}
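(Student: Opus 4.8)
The plan is to first record the two structural facts that drive the whole argument: that $M=\omega_f(x)$ is strongly invariant, so $f(M)=M$, and that its convex hull $D=[M]$ is a connected subset of $X$ (indeed a subdendrite). For the connectedness I would fix a point $a\in M$ and show that $D=\bigcup_{y\in M}[a,y]$. The inclusion $\supseteq$ is immediate from the definition of $[M]$, while for $\subseteq$ I would use the dendrite identity $[x,y]\subseteq[a,x]\cup[a,y]$, valid for any $x,y$. Since every arc $[a,y]$ contains the common point $a$, this union is connected, so $D$ is connected. (If $M$ reduces to a single fixed point both assertions are trivial, so I may assume $M$ is non-degenerate.)

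For assertion (i) I would compute the image directly. By Lemma \ref{lem1}, $f([x,y])=[f(x),f(y)]$ for all $x,y$, hence
$$f(D)=\bigcup_{x,y\in M}f([x,y])=\bigcup_{x,y\in M}[f(x),f(y)].$$
Since $f(x),f(y)\in f(M)=M$, each arc $[f(x),f(y)]$ lies in $D$, giving $f(D)\subseteq D$. Conversely, given $u,v\in M$, the surjectivity of $f_{\mid M}:M\to M$ (again from $f(M)=M$) yields $x,y\in M$ with $f(x)=u$ and $f(y)=v$, so $[u,v]=f([x,y])\subseteq f(D)$; thus $D\subseteq f(D)$. Hence $f(D)=D$, i.e. $D$ is strongly invariant.

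For assertion (ii), I would recall from the preliminaries that the monotony of $f$ on the dendrite $X$ passes to the restriction $f_{\mid A}:A\to X$ for every connected subset $A$ of $X$. Since $D$ is connected and $f(D)=D$ by (i), the map $f_{\mid D}$ is a self-map of $D$, and it remains only to verify the preimage condition. Given a connected $C\subseteq D$, it is also a connected subset of $X$, so $f^{-1}(C)$ is connected by monotony of $f$; therefore $(f_{\mid D})^{-1}(C)=f^{-1}(C)\cap D$ is connected, being the intersection of the two connected subsets $f^{-1}(C)$ and $D$ of the dendrite $X$ (\cite{Nadler}, Theorem 10.10). This establishes the monotony of $f_{\mid D}:D\to D$.

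The only step demanding real care is the double inclusion in (i): the inclusion $D\subseteq f(D)$ is not automatic and genuinely relies on $f$ being \emph{onto} $M$, which is why the strong invariance $f(M)=M$, rather than mere invariance $f(M)\subseteq M$, is essential here. Everything else reduces to the arc formula of Lemma \ref{lem1} and the stability of connectedness under intersection in dendrites, so I do not anticipate further obstacles.
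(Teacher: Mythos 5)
Your proof is correct and follows essentially the same route as the paper's: assertion (i) via the arc-image formula of Lemma \ref{lem1} together with $f(M)=M$, and assertion (ii) via the fact that the intersection of two connected subsets of a dendrite is connected. The only difference is that you spell out the double inclusion (in particular the use of surjectivity of $f_{\mid M}$ for $D\subseteq f(D)$), which the paper compresses into a single ``since $f(M)=M$'' step.
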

\begin{proof}
(i) Since $D=\displaystyle\cup_{a \neq b \in M}[a,b],$ we have $f(D)=\displaystyle\cup_{a \neq b \in M}f([a,b])$. By Lemma \ref{lem1},  $f(D)=\displaystyle\cup_{a \neq b \in M}[f(a),f(b)]$. Therefore, $f(D)=\displaystyle\cup_{a \neq b \in M}[a,b]$ since $f(M)=M$. Thus, $D$ is strongly invariant.\\
(ii) If $C$ is a connected subset of $D$ then by the fact that $f$ is monotone $f^{-1}(C)$ is connected and so the intersection $f^{-1}(C)\cap D$ is connected as it is the intersection of two connected subsets of a dendrite. Then $f_{\mid D}$ is monotone.
\end{proof}
\begin{lem}\label{lem3}
Let $f:D\rightarrow D$ be a monotone dendrite map and $x\in D$. If $M$ is an $\omega$-limit set of $f$ and $D$ is the convex hull of $M$. Then, we have:
\begin{itemize}
\item[(i)] Any arc in $X$ contains at most two points from $M$.
\item[(ii)] $M=\textrm{End}(D)$.
\end{itemize}
\end{lem}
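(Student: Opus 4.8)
\emph{Plan.} The plan is to prove (i) first and then deduce (ii) from it by a topological argument about the convex hull, since (ii) is essentially a restatement of (i) in terms of $\mathrm{ord}_D$. For (ii), I would first observe $\mathrm{End}(D)\subseteq M$: if $e\in\mathrm{End}(D)$ then, as $D=[M]=\bigcup_{a\neq b\in M}[a,b]$, the point $e$ lies on some arc $[a,b]$ with $a,b\in M$, and if $e\notin\{a,b\}$ it would be interior to $[a,b]$, forcing $\mathrm{ord}_D(e)\geq 2$ against $e\in\mathrm{End}(D)$; hence $e\in\{a,b\}\subseteq M$. For the reverse inclusion, suppose $m\in M$ with $\mathrm{ord}_D(m)\geq 2$. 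A short check shows every component of $D\setminus\{m\}$ meets $M$ (any point of a component lies on an arc $[a,b]$ with $a,b\in M$, and tracing that arc away from $m$ stays in the component and reaches a point of $M$); choosing $a,b\in M$ in two distinct components gives $m\in(a,b)$, i.e.\ three points of $M$ on one arc, which (i) forbids. Thus $M=\mathrm{End}(D)$.

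For (i), I argue by contradiction: assume some arc carries three points of $M$, so there are $a,b,c\in M$ with $b\in(a,c)$. By Lemma \ref{lem4}, $f_{\mid D}$ is a monotone dendrite self-map of the dendrite $D$, hence has a fixed point $p\in D$. Letting $q$ be the median of $p,a,c$ (the unique point of $[a,c]\cap[p,a]\cap[p,c]$) and interchanging $a,c$ if needed so that $q\in[a,b]$, I get $b\in[q,c]\subseteq[p,c]$, so after relabelling $b\in[p,c)$ with $b,c\in M$, $b\neq c$. The heart of the matter is the \emph{infinite} case, and the key point is to track the \emph{middle} point $b$. By Theorem \ref{th1}(i) $M$ is minimal, so if $M$ is infinite then $\omega_f(b)=M$ is infinite and the orbit of $b$ is dense in $M\ni c$. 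Applying Lemma \ref{lem5} to the arc $[p,c]$ with interior point $b\in[p,c)$ produces $\delta>0$ with $d(v,c)\leq\delta\Rightarrow[p,v]\supseteq[p,b]$; choosing $n$ with $d(f^n(b),c)\leq\delta$ (possible since $c\in\omega_f(b)$) gives $[p,f^n(b)]\supseteq[p,b]$, i.e.\ $b\in[p,f^n(b)]$. Lemma \ref{lem-2} (with fixed point $p$ and $x=b$) then forces $\omega_f(b)$ to be finite, contradicting $\omega_f(b)=M$. Throughout, betweenness is handled by Lemma \ref{lem1}, which gives $z\in[x,y]\Rightarrow f(z)\in[f(x),f(y)]$.

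It remains to treat the \emph{finite} case, where $M$ is a periodic orbit of period $N$ and Lemma \ref{lem-2} only yields the (now vacuous) conclusion that $\omega_f(b)$ is finite; here I would switch to a combinatorial argument. On $M$ the map $f$ is a single $N$-cycle, hence a bijection, so by Lemma \ref{lem1} it preserves betweenness \emph{strictly}: $u\in(v,w)$ with $u,v,w\in M$ distinct gives $f(u)\in(f(v),f(w))$, and the same holds for every iterate $f^{k}_{\mid M}$. Call $z\in M$ a leaf if $z\notin(a,b)$ for all $a,b\in M$. If $z$ were a leaf but $f(z)\in(y_1,y_2)$ for some $y_i\in M$, then applying $f^{N-1}$ (also betweenness-preserving, with $f^{N-1}(f(z))=z$) would give $z\in(f^{N-1}(y_1),f^{N-1}(y_2))$, contradicting that $z$ is a leaf; hence $f$ maps leaves to leaves. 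Since $M$ is a single orbit and $\mathrm{End}(D)\subseteq M$ consists of leaves and is nonempty (the nondegenerate tree $D=[M]$ has endpoints), transitivity forces every point of $M$ to be a leaf, so no triple $b\in(a,c)$ exists, the desired contradiction.

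I expect the main obstacle to be part (i), and specifically the recognition that one must follow the \emph{middle} point $b$ rather than $a$ or $c$: positioning $b$ on the arc $[p,c]$ between the fixed point and an extremal image is exactly what lets Lemma \ref{lem5} push an iterate of $b$ beyond $b$ and trigger the finiteness criterion of Lemma \ref{lem-2}. The finite case is a genuine second obstacle, since the iteration argument degenerates there and must be replaced by the leaf/automorphism argument above.
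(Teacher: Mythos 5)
Your proof is correct, and both halves take a genuinely different route from the paper's. In the infinite case the paper uses Theorem \ref{th1}(ii) (density of periodic points in $M$) to produce periodic points $p,q$ with $b\in[a,c]\cap[p,q]$, observes that $[p,q]$ is $f^n$-invariant for a common period $n$ (Lemma \ref{lem1}), and then applies Lemma \ref{lem0} to the monotone interval map $f^n_{\mid [p,q]}$ to conclude that $\omega_f(b)$ is finite; you instead use only minimality (Theorem \ref{th1}(i)), a fixed point of $f_{\mid D}$, and Lemmas \ref{lem5} and \ref{lem-2} to push an iterate of $b$ past $b$ along $[p,c]$. Your route is somewhat more self-contained: it does not need $M\subset\overline{\textrm{P}(f)}$, and it sidesteps a step the paper leaves implicit, namely that producing $p,q\in \textrm{P}(f)$ with $b\in[p,q]$ itself requires a Lemma \ref{lem5}-type argument near $a$ and $c$ (incidentally, your mechanism is close in spirit to the paper's later proof of Theorem \ref{th4}). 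In the finite case the paper is much shorter: $T=[M]$ is strongly invariant (Lemma \ref{lem4}), so $\textrm{End}(T)$ is a nonempty closed $f$-invariant subset of $M$ by Lemma \ref{lem11}(ii), and minimality gives $\textrm{End}(T)=M$ at once; your leaf-preservation argument (strict betweenness from Lemma \ref{lem1} plus injectivity of $f$ on the finite cycle) reaches the same conclusion by hand, and you correctly identified that Lemma \ref{lem-2} degenerates in this case and that a separate argument is genuinely needed. Finally, you spell out (ii), which the paper dismisses as an easy consequence of (i); your deduction (endpoints of $[M]$ lie in $M$, and a point of $M$ of order at least two in $D$ would be interior to an arc between two points of $M$) is exactly the intended one.
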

\begin{proof}
(i) Let $x\in X$. If $M$ is finite then it is a periodic orbit. So if we denote by $T$ the convex hull of $M$ then by Lemma \ref{lem4} (i), $T$ is a subtree of $X$ strongly invariant. By Lemma  \ref{lem11} (ii), $f(\textrm{End}(T))=\textrm{End}(T)$ hence due to the minimality of $M$, $\textrm{End}(T)=M$. So there are no three points from $M$ lying in the same arc. If now $M$ is infinite and $a,b,c\in M$ such that for some arc $I$, $\{a,b,c\}\subset I$ and $b\in (a,c)$. By Theorem \ref{th1}, there are $p,q\in \textrm{P}(f)$ such that $b\in [a,c]\cap [p,q]$. Since $p$ and $q$ are periodic, there is $n\in \mathbb{N}$ such that $f^n(p)=p$ and $f^n(q)=q.$ By Lemma \ref{lem1}, $f^n([p,q]=[p,q])$. So, by Lemma \ref{lem0}, $\omega_{f^n}(b)$ is finite and so is $\omega_f(b)$. But this contradicts that $M=\omega_f(b)$ is an infinite $\omega$-limit set.\\
(ii) It can be deduced easily from assertion (i).
\end{proof}

\begin{thm}\label{th4}Let $(X,d)$ ba a dendrite and $f:X\rightarrow X$ a monotone dendrite map. Then
$$\overline{\textrm{P}(f)}=\Lambda(f)=\textrm{RR}(f)$$
\end{thm}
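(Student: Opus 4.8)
The plan is to reduce everything to the single inclusion $\overline{\textrm{P}(f)}\subseteq \textrm{RR}(f)$. Combined with the general chain $\textrm{P}(f)\subseteq \textrm{RR}(f)\subseteq \textrm{R}(f)\subseteq \Lambda(f)$ and with $\Lambda(f)\subseteq \overline{\textrm{P}(f)}$ (which follows by taking the union over all $x$ in Theorem \ref{th1}(ii)), this squeezes $\overline{\textrm{P}(f)}\subseteq \textrm{RR}(f)\subseteq \Lambda(f)\subseteq \overline{\textrm{P}(f)}$ and forces the three sets to coincide. I would split the target inclusion into two statements: (a) every point of an $\omega$-limit set is regularly recurrent, giving $\Lambda(f)\subseteq \textrm{RR}(f)$ and hence $\textrm{RR}(f)=\textrm{R}(f)=\Lambda(f)$; and (b) $\Lambda(f)$ is closed, giving $\overline{\textrm{P}(f)}=\overline{\Lambda(f)}=\Lambda(f)$.

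For (a), fix $x$ in a minimal set $M=\omega_f(x)$ (Theorem \ref{th1}(i)); the case $M$ finite is clear since periodic points are regularly recurrent, so assume $M$ infinite. Let $D=[M]$ be its convex hull: by Lemma \ref{lem4}, $D$ is strongly invariant and $f|_D$ is a monotone dendrite map, and by Lemma \ref{lem3}, $M=\textrm{End}(D)$, so $x$ is an endpoint of $D$. Pick $a\in\textrm{Fix}(f|_D)$. Since $x\in M=\omega_f(x)$, there are times with $f^{n_i}(x)\to x$; by Lemma \ref{lem6} the branch points $u_{n_i}$ given by $[a,x]\cap[a,f^{n_i}(x)]=[a,u_{n_i}]$ are periodic, and by Lemma \ref{lem5} they satisfy $u_{n_i}\to x$ along $[a,x]$. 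Extracting a subsequence that increases to $x$ along the arc and applying Lemma \ref{lem7} (with $p_\infty=x$), the components $W_n$ of $D\setminus\{p_n\}$ containing $x$ satisfy $\textrm{diam}(W_n)\to 0$; here I use that $x$ is an endpoint of $D$, so removing $x$ does not disconnect $D$ and $W_n$ is exactly the sleeve between $p_n$ and $x$ of Lemma \ref{lem7}. Now fix $\eps>0$, choose a periodic $p=p_n\in(a,x)$ with $\textrm{diam}(W)<\eps$ for $W$ the component of $D\setminus\{p\}$ containing $x$, and set $N$ equal to the period of $p$, $g=f^N$. Then $g$ fixes both $a$ and $p$; for every $y\in W$ the arc $[a,y]$ crosses $p$, so $p\in[a,y]$ and, by Lemma \ref{lem1}, $[a,g(y)]=g([a,y])\ni g(p)=p$, whence $g(y)\in W\cup\{p\}$. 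Because the orbit of $x$ stays in $M$ while $p\notin M$, it never hits $p$, so $f^{kN}(x)\in W$ and $d(x,f^{kN}(x))\le \textrm{diam}(W)<\eps$ for all $k\ge 0$. As $\eps$ was arbitrary, $x\in \textrm{RR}(f)$.

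For (b) I would show that every $z\in\overline{\textrm{P}(f)}$ is recurrent (then $z$ lies in the minimal set $\omega_f(z)$ and (a) gives $z\in \textrm{RR}(f)\subseteq\Lambda(f)$). The natural attempt is a trapping argument: if $z\notin\omega_f(z)=:M$, then $d(z,M)>0$ and there is $\eta>0$ with $d(f^n(z),z)\ge\eta$ for all $n\ge1$; one then seeks a periodic point $p$ on the arc $[a,z]$ close to $z$ that separates $z$ from $M$, so that, exactly as in (a), the component $W$ of $X\setminus\{p\}$ containing $z$ is $f^N$-trapping ($N$ the period of $p$) and $f^{kN}(z)\in W$ for all $k$, contradicting $f^{kN}(z)\to M$ since $\overline W\cap M=\emptyset$.

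I expect (b) to be the main obstacle. The difficulty is precisely the production of a separating periodic point on $[a,z]$: when $\omega_f(z)$ is infinite Lemma \ref{lem6} does supply periodic points $u_n\in[a,z]$, but if $z$ is not recurrent these accumulate at the branch point where the orbit veers toward $M$ rather than at $z$ itself, and the periodic points furnished by $z\in\overline{\textrm{P}(f)}$ may sit on side branches that fail to separate $z$ from $M$. To close this gap I would combine Lemma \ref{lem6} with a compactness argument in the hyperspace $2^X$: pass to a Hausdorff limit $M_\infty$ of the minimal sets $\omega_f(f^n(z))$ (equivalently, of minimal sets through points $z_k\to z$), note that $f(M_\infty)=M_\infty$ by continuity of $2^f$, and argue via the endpoint/monotone structure of the convex hulls (Lemmas \ref{lem3}, \ref{lem4}) that $M_\infty$ is again a union of minimal sets, so that $z\in M_\infty\subseteq\Lambda(f)$. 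Showing that this Hausdorff limit consists only of recurrent points is the crux of the theorem, and is where the specific rigidity of monotone dendrite maps must be exploited.
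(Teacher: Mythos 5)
Your reduction is logically sound, and your part (a) is essentially the paper's own proof of the only inclusion it actually proves: the paper likewise fixes a fixed point $a$, uses Lemmas \ref{lem5} and \ref{lem6} plus recurrence to produce periodic points $p_n$ marching monotonically up $[a,y]$ to $y$, shrinks the intermediate regions via Lemma \ref{lem7}, and traps the orbit beyond $p_N$ using Lemmas \ref{lem1} and \ref{lem-2}. The decisive gap is your part (b), and you correctly predicted you could not close it: the Hausdorff-limit program you sketch (limits of minimal sets through points near $z$, invariance under $2^f$, ``the limit is a union of minimal sets'') never establishes the key point that a limit of periodic points is recurrent, and nothing in Lemmas \ref{lem3}, \ref{lem4} or compactness of $2^X$ obviously substitutes for it. What you are missing is that this statement is prior work: $\overline{\textrm{P}(f)}=\textrm{R}(f)=\Lambda(f)$ for monotone dendrite maps is exactly Theorem 1.4 of \cite{Nagh}, and the paper's proof of Theorem \ref{th4} opens by citing it, so that the entire new content of the theorem is $\Lambda(f)\subseteq \textrm{RR}(f)$, i.e.\ your part (a). (Note also that the closedness-type statement you are trying to prove directly is close in spirit to Theorem \ref{thm2}, whose proof in the paper comes \emph{after} and depends on Theorem \ref{th4}, so arguing in that direction risks circularity.) As a self-contained argument your proposal is therefore incomplete exactly where you flagged it; as a proof modulo the literature it would be fine if you simply invoked \cite{Nagh} for (b).

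A secondary issue, which your write-up shares with the paper's own text, is the trapping step in (a): from $p\in[a,g(y)]$ you conclude $g(y)\in W\cup\{p\}$, but $D\setminus\{p\}$ may have components other than the one containing $a$ and the one containing $x$, and $p\in[a,g(y)]$ only excludes the $a$-side component. This is a real leap: for an adding-machine homeomorphism on an infinite binary-tree dendrite, taking $p$ a branch point of period $2^j$ and $g=f^{2^j}$, the point $g(x)$ lands in the sibling branch at $p$, a third component, so the claim as stated is false (the paper's ``So necessary, $f^{kn}(y)\in V_N$'' has the same defect). The fix in your setting is easy and worth recording: replace $W$ by $\widetilde W_n=\{w\in D:\ p_n\in[a,w]\}$. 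Each $\widetilde W_n$ is closed, contains $x$, and satisfies $g(\widetilde W_n)\subseteq \widetilde W_n$ for any iterate $g$ fixing $a$ and $p_n$, since $p_n=g(p_n)\in g([a,w])=[a,g(w)]$ by Lemma \ref{lem1}; moreover the $\widetilde W_n$ are nested compacta with $\bigcap_n \widetilde W_n=\{x\}$ because $p_n\to x$ and $x\in\textrm{End}(D)$, so $\textrm{diam}(\widetilde W_n)\to 0$ and regular recurrence of $x$ follows, without any appeal to Lemma \ref{lem7}.
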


 \begin{proof} By \rm{(\cite{Nagh}, Theorem 1.4)}, we have $\overline{\textrm{P}(f)}=\textrm{R}(f)=\Lambda(f)$ and it is known that $\textrm{RR}(f)\subseteq \Lambda(f)$. So, it suffices to prove that $\Lambda(f)\subseteq \textrm{RR}(f)$.\ Suppose that $y\in \Lambda(f)$,\ that is for some $x\in X,\ y\in \omega_f(x)$.\ Suppose that $\omega_f(x)$ is infinite. Let $a\in \textrm{Fix}(f)$. First, we prove that $[a,y]$ contains a sequence of periodic points $(p_n)_n$ such that $p_{n+1}\in (p_n,y)$ for each $n$ and $\underset{n\to +\infty}\lim p_n=y$.\ Let $z_1\in (a,y)$.\ Then by Lemma \ref{lem5}, for some $\alpha_1 > 0$,\ if $y'\in B(y,\alpha_1)$ then $[a,y']\supset [a,z_1]$.\ Since $y$ is recurrent, there is $m_1\in \mathbb{N}$ such that $f^{m_1}(y)\in B(y,\alpha_1)$.\ By Lemma \ref{lem6}, we have $[a,f^{m_1}(y)]\cap [a,y]=[a,p_1]$ where $p_1\in \textrm{Fix}(f^{m_1})$ and we have also $y\neq p_1$ (since otherwise by the minimality of $\omega_f(x)$, $\omega_f(x)=\omega_f(y)$ and so $\omega_f(x)$ is finite which is a contradiction). As $f^{m_1}(y)\in B(y,\alpha_1),\ [a,f^{m_1}(y)]\supset [a,z_1]$. Therefore $[a,z_1]\subset [a,f^{m_1}(y)]\cap [a,y]=[a,p_1]$.\ So, $p_1\in [z_1,y)$. Let $z_2\in (p_1,y)$ be such that $\mathrm{diam}([z_2,y])<\frac{d(z_1,y)}{2}$. Let $\alpha_2>0$ be such that for any $y^{'}\in B(y,\alpha_2)$, $[a,y^{'}]\supset [a,z_2]$. By the fact that $y$ is recurrent, there is $m_2\in\mathbb{N}$ such that $f^{m_2}(y)\in B(y,\alpha_2)$. Then similarly as above we prove that $[a,f^{m_2}(y)]\cap [a,y]=[a,p_2]$ where $p_2\in \textrm{Fix}(f^{m_2})\cap [z_2,y)$. \ We proceed the construction of the sequence $(p_n)_n$ by induction in the following way: Suppose that $p_n$ is already defined, let $z_{n+1}\in (p_n,y)$ be such that $\mathrm{diam}([z_{n+1},y])< \frac{d(z_n,y)}{2}$. By Lemmas \ref{lem5} and  \ref{lem6}\ and the fact that $y$ is recurrent, there is $m_{n+1}\in \mathbb{N}$ and $p_{n+1}\in \textrm{Fix}(f^{m_{n+1}})$ such that $[a,f^{m_{n+1}}(y)]\cap [a,y]=[a,p_{n+1}]$ and $p_{n+1}\in [z_{n+1},y)$.

By construction, we have for all $n$, $p_{n+1}\in (p_n,y)$ and $d(p_{n+1},y)<\frac{d(p_1,y)}{2^n}$. \ Therefore, the sequence $(p_n)_n$ converges to $y$. Now, we are going to prove that $y\in \textrm{RR}(f)$:\ We define for each $n\in \mathbb{N},\ V_n\neq \emptyset$ to be the closure of the connected component of $X\setminus \{p_n,y\}$ that contains $(p_n,y)$.\ By Lemma \ref{lem7},  $\underset{n\to
+\infty}\lim \mathrm{diam}(V_{n})=0$.\ Let $\varepsilon> 0$ and let $N\in \mathbb{N}$ be such that diam$(V_N)<\varepsilon$.\ Take $k\in \mathbb{N}$ be such that $f^k(p_N)=p_N$. We prove then that for all $n\in \mathbb{N}$ we have $f^{kn}(y)\in V_N$.\ In fact, for any $n\in \mathbb{N},\ f^{kn}([a,y])=[a,f^{kn}(y)]\supset [a,p_N]$.\ By Lemma \ref{lem-2},\ $[a,y]$ is not included into $[a,f^{kn}(y)]$. So necessary, $f^{kn}(y)\in V_N$.\ This ends the proof.
\end{proof}

\medskip
 \begin{defn}(\textit{Adding machines})  Let $\alpha=(j_1,j_2,...)$ be a sequence of integers, where each $j_i\geq 2.$\ Let $\triangle_\alpha$ denote all sequences $(x_1,x_2,...)$ where $x_i\in \{0,1,...,j_i-1\}$ for each $i.$\ We put a metric $d_\alpha$ on $\triangle_\alpha$ given by
$$d_\alpha((x_1,x_2,...),(y_1,y_2,...))=\displaystyle\sum_{i=1}^{\infty}\frac{\delta(x_i,y_i)}{2^i}$$
Where $\delta(x_i,y_i)=1$ if $x_i\neq y_i$ and $\delta(x_i,y_i)=0$ if $x_i=y_i$. The addition in $\triangle_\alpha$ is defined as follows:
$$(x_1,x_2,...)+(y_1,y_2,...):=(z_1,z_2,...)$$
where $z_1=(x_1+y_1)\ mod\ j_1$ and $z_2=(x_2+y_2+t_1)\ mod\ j_2$, with $t_1=0$ if $x_1+y_1< j_1$ and $t_1=1$ if $x_1+y_1\geq j_1$. So, we carry a one in the second case. Continue adding and carrying in this way for the whole sequence.\\
We define $f_\alpha:\triangle_\alpha\rightarrow \triangle_\alpha$ by
$$f_\alpha(x_1,x_2,...)=(x_1,x_2,...)+(1,0,0,...)$$
We will refer to the map $f_\alpha:\triangle_\alpha\rightarrow \triangle_\alpha$ as the \emph{adding machine map}.
\end{defn}
\medskip
\medskip
\begin{thm}\label{th2}\rm{(\cite{Block}, Corollary 2.5)} Let $f:X\rightarrow X$ be a continuous map of a compact Hausdorff space to itself. There is a sequence $\alpha$ of prime numbers such that $f$ is topologically conjugate to the adding machine $f_\alpha$ if and only if $X$ is an infinite minimal set for $f$ and each point of $X$ is regularly recurrent.
\end{thm}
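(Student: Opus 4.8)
This is a biconditional, so I would treat the two implications separately, the forward one being routine and the converse carrying all the weight. For the ``only if'' part, note that a topological conjugacy is a homeomorphism intertwining the dynamics, so it preserves minimality, the cardinality of the underlying space, and regular recurrence (the latter being expressible purely topologically: the return time set of a point to each of its neighbourhoods contains an arithmetic progression $N\mathbb{Z}_{+}$). Hence it suffices to verify the three properties for the model $f_\alpha:\triangle_\alpha\to\triangle_\alpha$. The space $\triangle_\alpha=\prod_{i\geq 1}\{0,\dots,j_i-1\}$ is an infinite Cantor set. For regular recurrence of a point $x$, given $\eps>0$ I would pick $m$ with $2^{-m}<\eps$ and set $N=j_1\cdots j_m$; adding $1$ exactly $N$ times runs the first $m$ digits through a full carry cycle, so $f_\alpha^{kN}(x)$ agrees with $x$ in the first $m$ coordinates for every $k$, whence $d_\alpha(x,f_\alpha^{kN}(x))\leq 2^{-m}<\eps$. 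For minimality, the orbit segment of length $j_1\cdots j_m$ realizes all $j_1\cdots j_m$ possible length-$m$ prefixes, so every point is $2^{-m}$-approximated by the orbit of any given point. This settles the forward direction.

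For the converse, fix $x_0\in X$ and a decreasing sequence of neighbourhoods $U_n$ with $\bigcap_n\overline{U_n}=\{x_0\}$ (or, in the non-metrizable case, a suitable shrinking net). Using regular recurrence of $x_0$, and arranging the periods to be nested, I would produce integers $N_1\mid N_2\mid\cdots$ with $f^{kN_n}(x_0)\in U_n$ for all $k$. Since $x_0$ is also regularly recurrent for $f^{N_n}$, the fact recalled in the Introduction gives that $Y^{(n)}:=\omega_{f^{N_n}}(x_0)$ is a minimal set for $f^{N_n}$, hence strongly $f^{N_n}$-invariant and contained in $\overline{U_n}$. Consequently each $f$-translate $f^{i}(Y^{(n)})$ is $f^{N_n}$-invariant, the family $\{f^{i}(Y^{(n)})\}_{i\geq 0}$ is $f$-periodic of some period $k_n\mid N_n$, and $f$ permutes the $k_n$ distinct translates cyclically. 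Their union is nonempty, closed and $f$-invariant, so by minimality of $(X,f)$ it equals $X$; thus $X=\bigcup_{i=0}^{k_n-1}f^{i}(Y^{(n)})$, and the nesting $Y^{(n+1)}\subseteq Y^{(n)}$ (from $N_n\mid N_{n+1}$) makes these coverings refine one another with $k_n\mid k_{n+1}$.

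The heart of the matter, and the step I expect to be the main obstacle, is to upgrade each covering to a genuine partition of $X$ into pairwise disjoint \emph{clopen} atoms whose diameters shrink to $0$. Here the hypothesis that \emph{every} point (not just $x_0$) is regularly recurrent is indispensable: in a minimal system it upgrades, by a standard compactness/uniformization argument, to uniform regular recurrence and hence to equicontinuity of the family $\{f^m\}$. From equicontinuity one deduces that for $U_n$ small the translates $f^{i}(Y^{(n)})$ are mutually disjoint and open (so clopen), that $\max_i\mathrm{diam}\,f^{i}(Y^{(n)})\to 0$, and that $k_n\to\infty$ because $X$ is infinite while the atoms have vanishing diameter. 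In particular $X$ turns out totally disconnected. The delicate point is precisely the disjointness: a priori the minimal sets $Y^{(n)}$ and their translates could overlap, and it is equicontinuity (equivalently, the profinite structure of the maximal equicontinuous factor forced by pointwise regular recurrence) that rules this out.

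Finally I would assemble the conjugacy. Labelling the atoms cyclically so that $f$ carries the $i$-th atom of the partition $\mathcal{A}_n$ to the $(i+1 \bmod k_n)$-th, and compatibly with the reductions $\mathbb{Z}/k_{n+1}\mathbb{Z}\to\mathbb{Z}/k_n\mathbb{Z}$, the map $\varphi:X\to\varprojlim \mathbb{Z}/k_n\mathbb{Z}$ sending $x$ to the sequence of indices of the atoms containing it is well defined and injective (meshes tend to $0$), continuous and onto (nesting and compactness), hence a homeomorphism conjugating $f$ to the $+1$ map on the inverse limit. Writing each ratio $k_{n+1}/k_n$ as a product of primes and interleaving yields a sequence $\alpha=(j_1,j_2,\dots)$ of primes whose partial products are the $k_n$, and the $+1$ map on $\varprojlim\mathbb{Z}/(j_1\cdots j_i)\mathbb{Z}$ is exactly the adding machine $f_\alpha$. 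Thus $f$ is topologically conjugate to $f_\alpha$, which completes the argument.
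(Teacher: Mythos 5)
The paper itself gives no proof of this statement: it is quoted verbatim as Corollary 2.5 of Block--Keesling \cite{Block}, so your attempt can only be judged on its own merits, and on those terms it contains a genuine gap at precisely the step you yourself flag as ``the heart of the matter.'' What is correct: the forward direction is routine and your verification for $f_\alpha$ is fine; and from regular recurrence of the \emph{single} point $x_0$ you correctly produce the nested cyclic coverings $X=\bigcup_{i=0}^{k_n-1}f^i(Y^{(n)})$ with $Y^{(n)}=\omega_{f^{N_n}}(x_0)\subset\overline{U_n}$ and $k_n\mid k_{n+1}$. But two things are misjudged. First, the properties you call delicate --- disjointness and openness of the translates --- are automatic and need no equicontinuity: each $f^i(Y^{(n)})$ is itself $f^{N_n}$-minimal (pull a closed invariant subset of $f(Y)$ back into $Y$), distinct minimal sets of $f^{N_n}$ are disjoint, and finitely many pairwise disjoint closed sets covering $X$ are all clopen. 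Second, what is genuinely needed is that the mesh $\max_i\mathrm{diam}\, f^i(Y^{(n)})\to 0$ (equivalently, injectivity of your $\varphi$), and you derive this from the claim that pointwise regular recurrence of every point plus minimality ``upgrades by a standard compactness/uniformization argument'' to uniform regular recurrence and equicontinuity. That implication is true, but it is essentially the theorem being proved, not folklore: the natural Baire-category argument only yields one $N$ and one open $U$ with $d(f^{kN}(x),x)\le\eps$ for $x\in U$, and minimality only lets an arbitrary point enter $U$ after a bounded delay $j$, controlling $d(f^{kN+j}(x),f^j(x))$ but not $d(f^{kN'}(x),x)$ for any uniform $N'$. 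Without this step your construction shows only that $\varphi$ is injective at $x_0$, i.e.\ that $(X,f)$ is an almost one-to-one extension of an odometer; Toeplitz subshifts (minimal, almost one-to-one over an odometer, with a dense set of regularly recurrent points, yet not conjugate to any odometer) show that the conclusion really does hinge on how the hypothesis ``\emph{every} point is RR'' enters, so it cannot be waved through.

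The gap can be closed in two ways compatible with your outline. (1) Block--Keesling style: for \emph{each} $y\in X$ put $W_y=\omega_{f^{N_y}}(y)$, where $N_y$ witnesses regular recurrence of $y$ within $\eps$; as above $W_y$ is a clopen set containing $y$ of diameter at most $2\eps$ whose translates form a cyclic clopen partition $\mathcal{P}_y$ of $X$. Extract a finite subcover $W_{y_1},\dots,W_{y_r}$ from the open cover $\{W_y: y\in X\}$ and take the common refinement of $\mathcal{P}_{y_1},\dots,\mathcal{P}_{y_r}$: in a minimal system the nonempty cells of this refinement form a single cycle (a union of cycles would be a proper clopen invariant set), and every cell lies inside some $W_{y_i}$, hence has diameter at most $2\eps$. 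This yields cyclic clopen partitions of arbitrarily small mesh, after which your inverse-limit assembly goes through verbatim. (2) Proximality style: any two points $y_1\neq y_2$ in one fiber of $\varphi$ are proximal (push the fiber toward $\varphi(x_0)$, whose fiber is the singleton $\{x_0\}$, along odometer times); a common period (e.g.\ $N_{y_1}N_{y_2}$) shows $(y_1,y_2)$ is regularly recurrent, hence a minimal point, of $(X\times X,f\times f)$; and a proximal pair that is a minimal point of the product system must lie on the diagonal, forcing $y_1=y_2$. Finally, note that your use of a countable shrinking basis $(U_n)$ at $x_0$ silently assumes metrizability, which the compact Hausdorff hypothesis does not grant; in the stated generality this also requires an argument.
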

Recall that $f$ is topologically conjugate to the adding machine $f_{\alpha}$  means that there exists a homeomorphism $h:X\rightarrow \triangle_\alpha$ such that $h\circ f=f_{\alpha}\circ h$.\\

 So we obtain the following description of the dynamical structure of infinite minimal set for monotone dendrite map.

 \medskip
\begin{cor}Let $f:X\rightarrow X$ be a monotone dendrite map and  $M$ is an infinite $\omega$-limit sets. Then there is a sequence $\alpha$ of prime numbers such that $f_{\mid M}$ is topologically conjugate to the adding machine $f_\alpha$.
\end{cor}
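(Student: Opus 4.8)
The plan is to verify directly the two hypotheses of Theorem~\ref{th2} for the restricted map $f_{\mid M}:M\to M$ and then invoke that theorem. Recall that $M$, being an $\omega$-limit set, is non-empty, closed (hence a compact metric space as a closed subset of the dendrite $X$) and strongly invariant, so $f(M)=M$ and $f_{\mid M}$ is a well-defined continuous self-map of a compact Hausdorff space. Thus Theorem~\ref{th2} applies provided we check that (a) $M$ is an infinite minimal set for $f_{\mid M}$, and (b) every point of $M$ is regularly recurrent under $f_{\mid M}$.

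First I would dispose of (a). By hypothesis $M$ is infinite, and by Theorem~\ref{th1}$(i)$ every $\omega$-limit set of $f$ is a minimal set; since minimality is an intrinsic property of the strongly invariant set $M$ together with the action of $f$ on it, $M$ is a minimal set for $f_{\mid M}$ as well. Explicitly, the orbit of every point of $M$ is dense in $M$, which is precisely minimality of $f_{\mid M}$.

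Next I would establish (b) using Theorem~\ref{th4}, which gives $\overline{\textrm{P}(f)}=\Lambda(f)=\textrm{RR}(f)$. Since $M$ is an $\omega$-limit set we have $M\subseteq\Lambda(f)=\textrm{RR}(f)$, so each $x\in M$ is regularly recurrent for $f$: for every $\varepsilon>0$ there is $N\in\mathbb{N}$ with $d(x,f^{kN}(x))<\varepsilon$ for all $k\in\mathbb{N}$. Because $M$ is $f$-invariant, all the iterates $f^{kN}(x)$ already lie in $M$, so this same inequality witnesses regular recurrence of $x$ under $f_{\mid M}$. Hence every point of $M$ is regularly recurrent for $f_{\mid M}$.

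With (a) and (b) in hand, Theorem~\ref{th2} yields a sequence $\alpha$ of prime numbers such that $f_{\mid M}$ is topologically conjugate to the adding machine $f_\alpha$, which is the assertion. I do not anticipate a genuine obstacle here, since the statement is a corollary assembled from the cited results; the only point requiring a word of care is the transfer of regular recurrence from $f$ to $f_{\mid M}$, and this is immediate from the strong invariance $f(M)=M$.
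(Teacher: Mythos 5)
Your proposal is correct and follows essentially the same route as the paper: the paper's proof simply cites Theorem~\ref{th4} to conclude $M\subset \textrm{RR}(f)$ and then invokes Theorem~\ref{th2}, leaving implicit the points you spell out (minimality of $M$ via Theorem~\ref{th1} and the transfer of regular recurrence to $f_{\mid M}$ via strong invariance). Your added care on those two points is sound but does not constitute a different argument.
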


\begin{proof}
By Theorem \ref{th4}, $M\subset \textrm{RR}(f)$. Hence by Theorem \ref{th2}, there is a sequence $\alpha$ of prime numbers such that $f_{\mid M}$ is topologically conjugate to the adding machine $f_\alpha$.
\end{proof}

\medskip
 Recall that the density of periodic points of $f$ trivially implies the density of periodic points of $2^f$. The converse is known to be not true, take for instance any adding machine map $f_\alpha$ it has no periodic points but its induced map $2^{f_ \alpha}$ has dense set of periodic points (see \cite{Banks}). However, it was conjectured by M\'endez (\cite{Mendez2}) that the converse holds for any dendrite map. We affirm this conjecture for the subclass of monotone dendrite maps.

\medskip
\begin{thm}\label{thmm}
Let $f:X\rightarrow X$ be a monotone dendrite map. Then the following statements are equivalent:
\begin{itemize}
\item[(a)] $\textrm{P}(f)$ is dense in $X$.
\item[(b)] $X\setminus \textrm{End}(X)\subset \textrm{P}(f)$ and $f$ is a homeomorphism.
\item[(c)] $P(2^f)$ is dense in $2^X$.
\item[(d)] $R(2^f)$ is dense in $2^X$.
\end{itemize}
\end{thm}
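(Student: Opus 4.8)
The plan is to establish the cycle $(a)\Rightarrow(b)\Rightarrow(c)\Rightarrow(d)\Rightarrow(a)$. The directions $(b)\Rightarrow(c)$, $(c)\Rightarrow(d)$ and $(d)\Rightarrow(a)$ are comparatively soft, so almost all of the weight sits on $(a)\Rightarrow(b)$, which I would handle first and which contains the real obstacle.

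Assume $(a)$. By Theorem \ref{th4}, $\textrm{RR}(f)=\overline{\textrm{P}(f)}=X$, so every point of $X$ is recurrent. Surjectivity of $f$ is then immediate, since $f(X)$ is compact, hence closed, and contains every periodic point, so $f(X)\supseteq\overline{\textrm{P}(f)}=X$. For injectivity I argue by contradiction: suppose $f(x)=f(y)=:c$ with $x\neq y$. By Lemma \ref{lem1}, $f([x,y])=[c,c]=\{c\}$, so $f$ is constant on the arc $[x,y]$, and therefore every $w\in[x,y]$ satisfies $\omega_f(w)=\omega_f(c)$. Since each $w\in(x,y)$ is recurrent, $w\in\omega_f(w)=\omega_f(c)$, so the arc $[x,y]$ contains the infinite set $(x,y)\subseteq\omega_f(c)$, contradicting Lemma \ref{lem3}(i), which allows an $\omega$-limit set to meet an arc in at most two points. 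Hence $f$ is an injective continuous self-map of a compact metric space, i.e. a homeomorphism.

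It then remains, for $(a)\Rightarrow(b)$, to show every cut point is periodic; this is the crux. Let $z\in X\setminus\textrm{End}(X)$, so $\mathrm{ord}_X(z)\geq 2$, and suppose $z$ were not periodic. Since $z$ is recurrent, $M:=\omega_f(z)$ is an infinite minimal set containing $z$, and by Lemma \ref{lem3}(ii), $M=\textrm{End}(D)$ with $D:=[M]$; thus $\mathrm{ord}_D(z)=1$ while $\mathrm{ord}_X(z)\geq 2$, so some component $C$ of $X\setminus\{z\}$ is disjoint from $D$. The key device is the first-point retraction $r:X\to D$, where $r(w)$ is the unique point at which the arc from $w$ first meets $D$. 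Using that $f$ is a homeomorphism with $f(D)=D$ (Lemma \ref{lem4}(i)) together with Lemma \ref{lem1}, I would verify the commutation $r\circ f=f\circ r$: since $[w,r(w)]\cap D=\{r(w)\}$, injectivity gives $f([w,r(w)])\cap D=f(D)\cap f([w,r(w)])=\{f(r(w))\}$, so $f(r(w))$ is the first point of $D$ reached from $f(w)$. Now choose a periodic point $p\in C$ (possible because $C$ is open and $\textrm{P}(f)$ is dense); then $r(p)=z$, and if $f^{k}(p)=p$ we obtain $z=r(p)=r(f^{k}(p))=f^{k}(r(p))=f^{k}(z)$, forcing $z$ to be periodic --- a contradiction. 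I expect this retraction/commutation step to be the main obstacle, as it is precisely where the structure $M=\textrm{End}(D)$ and the density of $\textrm{P}(f)$ are made to interact.

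For the remaining directions: $(b)\Rightarrow(c)$ follows because $\textrm{Cut}(X)=X\setminus\textrm{End}(X)$ is dense in $X$ and contained in $\textrm{P}(f)$; any finite set $F=\{q_1,\dots,q_m\}$ of cut points is a periodic point of $2^f$ (its period divides $\mathrm{lcm}$ of the periods of the $q_i$), and finite subsets drawn from a dense set are dense in $2^X$, so $\textrm{P}(2^f)$ is dense. The implication $(c)\Rightarrow(d)$ is trivial, as $\textrm{P}(2^f)\subseteq\textrm{R}(2^f)$. Finally, for $(d)\Rightarrow(a)$, I would fix $x\in X$ and $\varepsilon>0$ and pick $A\in\textrm{R}(2^f)$ with $d_H(A,\{x\})<\varepsilon/2$, so $A\subseteq B(x,\varepsilon/2)$; recurrence of $A$ gives $n_i\to\infty$ with $d_H(f^{n_i}(A),A)\to 0$, whence for any fixed $a\in A$ the points $f^{n_i}(a)$ lie in $B(x,\varepsilon)$ for large $i$. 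A convergent subsequence then produces a point of $\omega_f(a)\subseteq\Lambda(f)=\overline{\textrm{P}(f)}$ (Theorem \ref{th4}) within $\varepsilon$ of $x$; since $x$ and $\varepsilon$ are arbitrary, $\overline{\textrm{P}(f)}=X$, which is $(a)$.
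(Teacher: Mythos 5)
Your proof is correct, but it follows a genuinely different route from the paper's. The paper proves almost nothing internally: it quotes (a)$\Leftrightarrow$(b) from \cite{Nagh} (Theorem 1.6) and (a)$\Rightarrow$(c) from \cite{Banks} (Lemma 1), notes that (c)$\Rightarrow$(d) is clear, and only writes out (d)$\Rightarrow$(a), using Vietoris basic open sets: for open sets $\overline{U}\subset V$, the set $\Gamma(U)=\{F\in 2^X : F\subset U\}$ is open in $2^X$, a recurrent $F\in\Gamma(U)$ returns to $\Gamma(U)$ infinitely often, hence any $x\in F$ has $\omega_f(x)\cap\overline{U}\neq\emptyset$, and Theorem \ref{th4} turns $\Lambda(f)\cap\overline{U}\neq\emptyset$ into $\textrm{P}(f)\cap V\neq\emptyset$; your Hausdorff-ball version of (d)$\Rightarrow$(a) is the same argument in different clothing. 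The genuinely new content in your proposal is the self-contained proof of (a)$\Rightarrow$(b), which the paper outsources entirely to the citation. Your three steps there are all sound: surjectivity because $f(X)$ is closed and contains the dense set $\textrm{P}(f)$; injectivity because a two-point fiber collapses an arc to a point by Lemma \ref{lem1}, making the whole open subarc consist of recurrent points sharing one $\omega$-limit set, against Lemma \ref{lem3}(i); and periodicity of cut points via the equivariance $r_D\circ f=f\circ r_D$ of the first-point map onto the strongly invariant convex hull $D=[\omega_f(z)]$, which holds for a bijection with $f(D)=D$ by exactly the computation $[f(w),f(r_D(w))]\cap D=f([w,r_D(w)]\cap D)=\{f(r_D(w))\}$ that you give, and which forces $z=r_D(p)$ to inherit the period of any periodic point $p$ in a component of $X\setminus\{z\}$ missing $D$. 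Likewise your (b)$\Rightarrow$(c) re-proves, in the special case needed, Banks' lemma that density of $\textrm{P}(f)$ yields density of $\textrm{P}(2^f)$. What your route buys is a proof readable inside this paper alone, with an explicit mechanism for the hardest implication; what the paper's route buys is brevity, plus the direct implication (b)$\Rightarrow$(a), which your cycle recovers only by going all the way around --- though that direction is trivial in any case, since $\textrm{Cut}(X)$ is dense in $X$.
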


\begin{proof}
(a)$\Leftrightarrow$ (b) follows from (\cite{Nagh},Theorem 1.6). (a)$\Rightarrow$ (c) (\cite{Banks}, Lemma 1)  (c)$\Rightarrow$ (d)\ is clear.
Suppose that $R(2^f)$ is dense in $2^X$. Let $V$ be a non-empty open set of $X$ and take $U$ a non-empty open subset of $X$ such that $\overline{U}\subset V$. By (\cite{Nadler}, Theorem 4.5), the set $\Gamma(U)=\{F\subset U: F$ is a closed set of $X\}$ is open in $2^X$. Let $F\in R(2^f)\cap \Gamma(U)$, then for infinitely many $n$, $f^n(F)\in \Gamma(U)$ so $f^n(F)\subset U$. Then for any $x\in F$, $f^n(x)\in U$ for infinitely many $n\in\mathbb{N}$, it follows that $\omega_f(x)\cap \overline{U}\neq\emptyset$. Thus, $\Lambda(f)\cap \overline{U}\neq\emptyset$. So by Theorem \ref{th4}, $\overline{\textrm{P}(f)}\cap V\neq\emptyset$, hence $\textrm{P}(f)\cap V\neq\emptyset$.
\end{proof}
\begin{rem} \rm{Recently, it is proved in ( \cite{Acosta2}, Theorem 7.3) that if $P(2^f)$ is dense in $2^X$ then $\textrm{P}(f)$ is dense in $X$ for a class of continua called almost meshed (which contains in particular the dendrites with closed set of endpoints). By Theorem \ref{thmm}, $\textrm{P}(f)$ this also hold for every monotone map on any dendrite.}

\end{rem}

\section{\bf Spaces of minimal sets of monotone dendrite maps}

We recall the first point map $r_Y:X\rightarrow Y$ for a dendrite $Y$ contained in a dendrite $X$, it is defined by letting $r_Y(x)=x$ if $x\in Y$,\ and by letting $r_Y(x)$ the unique point $r_Y(x)\in Y$ such that $r_Y(x)$ is a point of any arc in $X$ from $x$ to any point of $Y$ if $x\in X\setminus Y$; this map is well defined and continuous, see \rm{(\cite{Nadler}, Lemma 10.25)}. Note that $r_Y$ is constant in every connected component of $X\setminus Y$.

\medskip
\begin{lem}\label{lem10} Let $f:X\to X$ be a monotone dendrite map and $D$ is a strongly $f$-invariant sub-dendrite of $X$. For any minimal set $M$ of $f$ if $M\cap D=\emptyset$ then $r_D(M)$ is a periodic orbit.
\end{lem}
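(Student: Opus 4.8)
The plan is to reduce everything to a single intertwining identity for the first point map, namely
\[
r_D\circ f=f\circ r_D \quad\text{on } X,
\]
together with the finiteness of $N:=r_D(M)$. Granting these two facts the conclusion is immediate: since $M$ is minimal it is strongly invariant, so $f(N)=r_D(f(M))=r_D(M)=N$ and $f$ restricts to a permutation of the finite set $N$. Fixing any $y\in M$, its orbit $O_f(y)$ is dense in $M$, and by the intertwining $r_D(f^n(y))=f^n(r_D(y))$; thus, if $Z=O_f(r_D(y))\subseteq N$ denotes the (finite) periodic orbit carrying $r_D(y)$, continuity of $r_D$ gives $N=r_D(\overline{O_f(y)})=\overline{r_D(O_f(y))}=\overline{Z}=Z$, a single periodic orbit.

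The hard part will be the intertwining. For $x\in D$ it is clear because $f(x)\in D$, so I would fix $x\notin D$, let $C$ be the component of $X\setminus D$ containing $x$, and let $p=r_D(x)$ be the point where $C$ is attached to $D$, so that $[x,p]\cap D=\{p\}$ and $[x,p)\subseteq C$. By Lemma~\ref{lem1}, $f([x,p])=[f(x),f(p)]$ with $f(p)\in D$. Since $r_D(f(x))$ is the first point of $D$ met along any arc from $f(x)$ to a point of $D$, and $f(p)\in D$, it suffices to prove that $f(p)$ is that first point on $[f(x),f(p)]$, i.e. that $[f(x),f(p))\cap D=\emptyset$.

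I expect this last claim to be the crux, and it is exactly where both hypotheses enter. Suppose $w\in[f(x),f(p))\cap D$ with $w\neq f(p)$, and write $w=f(t)$ with $t\in[x,p]$. The value $t=p$ is excluded, so $t\in[x,p)\subseteq C$, in particular $t\notin D$; on the other hand $w\in D=f(D)$ (here strong invariance is used), so $w=f(d)$ for some $d\in D$. Because $f$ is monotone, $f^{-1}(w)$ is connected and contains both $t\in C$ and $d\in D$, hence it contains the arc $[t,d]$, which must leave $C$ through its attaching point $p$; thus $p\in f^{-1}(w)$ and $f(p)=w$, a contradiction. The moral is that $f$ may harmlessly collapse a whole stub of $C$ onto a single point of $D$, but monotonicity together with $f(D)=D$ forbids it from spreading a non-degenerate stub over a non-degenerate sub-arc of $D$, and this is precisely what forces $f(p)$ to be the first return of the image arc to $D$.

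It remains to show $N$ is finite. Since $M$ and $D$ are disjoint compacta, $\delta_0:=d(M,D)>0$. Every component $C$ of $X\setminus D$ meeting $M$ contains some $x\in M$ with $d(x,p)\ge d(x,D)\ge\delta_0$, where $p\in D$ is its attaching point. If infinitely many components met $M$, I would select one such $x$ in each; by Lemma~\ref{lem8} choose $\delta\in(0,\delta_0)$ with $d(u,v)\le\delta\Rightarrow\mathrm{diam}([u,v])<\delta_0$, and by compactness find two of the selected points $x,x'$ in distinct components with $d(x,x')\le\delta$, whence $\mathrm{diam}([x,x'])<\delta_0$. But $[x,x']$ joins two distinct components of $X\setminus D$, so it passes through the attaching point $p$ of the component of $x$; then $p\in[x,x']$ and $\mathrm{diam}([x,x'])\ge d(x,p)\ge\delta_0$, a contradiction. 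Hence only finitely many components meet $M$, and $N$, being the finite set of their attaching points, is finite. Combining the intertwining, the finiteness, and the first paragraph completes the proof.
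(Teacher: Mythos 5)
Your proof is correct, but it is organized quite differently from the paper's. The shared core is the monotonicity trick: a fiber $f^{-1}(\{w\})$ (or $f^{-n}(\{b\})$) is connected, so if it contains a point inside a component $C$ of $X\setminus D$ and also a point of $D$ (the latter supplied by strong invariance $f(D)=D$), it must contain the attaching point of $C$. The paper applies this trick directly with iterates: for distinct $a,b\in r_D(M)$ it uses minimality of $M$ to move a point $x\in M$ from the component attached at $a$ into the component attached at $b$, and then concludes $f^n(a)=b$; compactness of $r_D(M)$ then yields that it is a (minimal, hence periodic) orbit, with no finiteness argument needed. You instead package the trick into the global identity $r_D\circ f=f\circ r_D$ on all of $X$ --- i.e.\ $r_D$ is a semiconjugacy from $(X,f)$ onto $(D,f_{\mid D})$ --- which is a stronger, reusable structural fact not stated in the paper; the price is that you must separately prove $r_D(M)$ is finite (your argument via Lemma \ref{lem8} and $d(M,D)>0$ is fine), whereas the paper gets periodicity without it. Your route has two genuine advantages: the invariance $f(r_D(M))=r_D(M)$, which is needed to know $r_D(M)$ is a whole orbit rather than a proper subset of one, is completely transparent (in the paper this is implicit in the terse final sentence ``Thus $r_D(M)$ is a minimal orbit''), and your permutation argument handles the case where $r_D(M)$ is a single point uniformly, while the paper's written argument only treats pairs $a\neq b$. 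The paper's route is shorter and exploits minimality of $M$ more directly; yours isolates a cleaner lemma at the cost of extra bookkeeping.
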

\begin{proof}
Let $M$ be a minimal subset of $X$ disjoint from $D$. Let $a,b\in r_D(M)$ with $a\neq b$. Suppose that $C_a$ (resp. $C_b$) is the connected component of $X\setminus D$ such that $\overline{C_a}\cap D=\{a\}$ (resp. $\overline{C_b}\cap D=\{b\}$). Then $C_a\cap C_b=\emptyset$ and $C_a\cap M\neq\emptyset\neq C_b\cap M$. Let $x\in C_a\cap M$. By the fact that $C_b$ is an open neighborhood of $b$ in $X$ and $M$ is minimal, there is $n\in\mathbb{N}$ such that $f^n(x)\in C_b$. Therefore, $f^n(a)=b$; indeed, we have $f^n([a,x])=[f^n(a),f^n(x)]\ni b$. So the point $b$ has a preimage $\alpha$ under $f^n$ in the arc $[a,x]$ and recall that $f(D)=D$ which implies that $b$ has also a preimage $\gamma$ in $D$ under $f^n$, hence $a\in [\alpha,\gamma]$. By the fact that $f$ is monotone (and so is $f^n$), $a\in [\alpha,\gamma]\subset f^{-n}(\{b\})$ so $f^n(a)=b$.

As the retraction map $r_D$ is continuous and $M$ is compact, $r_D(M)$ is compact. Thus $r_D(M)$ is a minimal orbit, so it is a periodic orbit.
\end{proof}
\begin{lem}\label{lem9} Let $f:X\rightarrow X$ be a monotone dendrite map and $M$ an infinite $\omega$-limit set of $f$. Suppose that $D$ is the convex hull of $M$  and  $p\in D$ is a periodic point with $T$ be the convex hull of its orbit $O_f(p)$. Then we have :
\begin{itemize}
\item[(i)] For any component $C$ of $X \setminus T,\ \ r_{T}(C)\in O_f(p)$.
\item[(ii)] $ord_D(x)$ is finite for each $x\in D$.
\end{itemize}
\end{lem}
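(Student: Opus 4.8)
The plan is to record four structural facts about $T$ and then dispatch the two assertions separately; assertion (ii) I expect to be clean, while the heart of (i) is a single localization step. Throughout, write $m$ for the period of $p$, so that $O_f(p)=\{p,f(p),\dots,f^{m-1}(p)\}=\omega_f(p)$ is a finite $\omega$-limit set on which $f$ acts as a single $m$-cycle.

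First I would establish the following. \emph{(a)} $T=[O_f(p)]$ is strongly invariant, by applying Lemma \ref{lem4}(i) to the finite $\omega$-limit set $O_f(p)$ (equivalently, pushing $f$ through the convex hull via Lemma \ref{lem1}), so $f(T)=T$. \emph{(b)} $\textrm{End}(T)=O_f(p)$: endpoints of the convex hull of a finite set lie in that set, so $\textrm{End}(T)\subset O_f(p)$; Lemma \ref{lem11}(ii) gives $f(\textrm{End}(T))\supset\textrm{End}(f(T))=\textrm{End}(T)$, and finiteness forces $f$ to permute $\textrm{End}(T)$, so this nonempty $f$-invariant subset of the single cycle $O_f(p)$ is all of it. \emph{(c)} $M\cap T=\emptyset$: by Lemma \ref{lem3}(ii) $M=\textrm{End}(D)$, and the infinite minimal set $M$ contains no periodic point, so $O_f(p)\cap M=\emptyset$; any point of $T\cap M$ would be an endpoint of $D$ lying in the subtree $T$, hence an endpoint of $T$, i.e. a point of $O_f(p)$, which is excluded. \emph{(d)} $r_T(M)=O_f(p)$: since $M$ is minimal (Theorem \ref{th1}(i)) and disjoint from the strongly invariant $T$, Lemma \ref{lem10} makes $r_T(M)$ a single periodic orbit; each $e\in\textrm{End}(T)=O_f(p)$ is a cut point of $D$ (as $e\notin\textrm{End}(D)$), so some endpoint $y\in M$ of $D$ lies beyond $e$ with $r_T(y)=e$, giving $O_f(p)\subset r_T(M)$, and a periodic orbit containing the full orbit $O_f(p)$ must equal it.

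For (i), let $C$ be a component of $X\setminus T$ and $a=r_T(C)$. If $C$ meets $D$, pick $z\in C\cap D\subset D\setminus T$; its component $V$ of $D\setminus T$ is open in $D=[M]$ and hence contains an endpoint of $D$, i.e. a point of $M$, while $a=r_T(z)$ is the base of $V$, so $a\in r_T(M)=O_f(p)$. This already settles every component of $D\setminus T$. The remaining case is $C\cap D=\emptyset$; writing $C'$ for the component of $X\setminus D$ containing $C$ and using $r_T=r_T\circ r_D$, one gets $a=r_T(r_D(C'))$, which again lands in $O_f(p)$ unless $r_D(C')\in T$, i.e. unless $C'$ attaches to $D$ at a point $a\in T$. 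So everything reduces to excluding $a\in\textrm{Cut}(T)$ for a branch of $X$ sprouting from $D$ at a cut point of $T$. Here I would feed a minimal subset of $\overline{C'}$ disjoint from $D$ into Lemma \ref{lem10} to force $a$ to be periodic and, by matching $r_D$ with the dynamics along an orbit that stays outside $D$, to place $a$ in $r_T(M)=O_f(p)=\textrm{End}(T)$, contradicting $a\in\textrm{Cut}(T)$. \textbf{This is the main obstacle:} the retraction need not commute with $f$ along such an orbit, failing exactly when $f$ folds the connecting arc into $D$, and controlling this folding (that branches outside $D$ cannot attach at cut points of $T$ — false without the infinite $M$, as the monotone ``whisker on an arc'' example shows) is the crux.

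For (ii), I would argue directly from $M=\textrm{End}(D)$ and compactness, independently of (i). Suppose $ord_D(x)=\infty$; as $x\notin\textrm{End}(D)=M$ we have $x\notin M$, and there are infinitely many distinct components $B_1,B_2,\dots$ of $D\setminus\{x\}$. Each $\overline{B_i}=B_i\cup\{x\}$ is a nondegenerate subdendrite, hence has an endpoint $y_i\in B_i$ with $y_i\neq x$; since $B_i$ is open in $D$, a neighbourhood of $y_i$ in $D$ lies in $B_i$, so $ord_D(y_i)=1$ and $y_i\in\textrm{End}(D)=M$. By compactness of $M$ a subsequence $y_{i_k}$ converges to some $y_\infty\in M$, and $y_\infty\neq x$ because $x\notin M$; thus $y_\infty$ lies in a component $B_\infty$ of $D\setminus\{x\}$, which is open, so $y_{i_k}\in B_\infty$ for all large $k$. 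But $y_{i_k}\in B_{i_k}$ with the $B_{i_k}$ pairwise distinct, forcing $B_{i_k}=B_\infty$ for infinitely many distinct indices, a contradiction. Hence $ord_D(x)<\infty$ for every $x\in D$.
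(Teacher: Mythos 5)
You correctly proved everything that can be proved here, and the step you flagged as ``the main obstacle'' is not an obstacle you failed to overcome: that case of the lemma is \emph{false}, even with $M$ infinite, so no argument exists. Take $D$ to be the dyadic-tree dendrite whose endpoint set is a Cantor set $M$ (vertices $v_s$ indexed by finite binary words, root $v_\emptyset$, with $D=[M]$), and let $f:D\to D$ be the adding-machine homeomorphism: the $2$-adic odometer permutes dyadic cylinders compatibly with the tree structure, so it induces a homeomorphism of $D$ acting minimally on $M$, with $f(v_\emptyset)=v_\emptyset$, $f(v_0)=v_1$ and $f(v_1)=v_0$. Now let $X=D\cup A$, where $A$ is an arc with $A\cap D=\{v_\emptyset\}$, and extend $f$ to $X$ as the identity on $A$. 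Then $f:X\to X$ is a homeomorphism, hence a monotone dendrite map; $M$ is an infinite $\omega$-limit set with convex hull $D$; and $p=v_0\in D$ is periodic with $T=[v_0,v_1]\ni v_\emptyset$. The component $C=A\setminus\{v_\emptyset\}$ of $X\setminus T$ satisfies $r_T(C)=\{v_\emptyset\}\notin O_f(p)$, violating assertion (i). So the folding phenomenon you worried about really does occur; the hypothesis that $M$ is infinite does not exclude it.

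This is in fact a gap in the paper, not in your proposal. The paper's proof of (i) asserts that every component $C$ of $X\setminus T$ satisfies $C\cap M\neq\emptyset$; this holds precisely when $C$ meets $D$ (your settled case) and fails for $C=A\setminus\{v_\emptyset\}$ above. Apart from that false claim, your argument and the paper's coincide: the paper chooses $x\in M$ with $r_T(x)=p$ (existence is exactly your fact (d), which the paper leaves unjustified), shows $[f^n(p),f^n(x)]\cap T=\{f^n(p)\}$ using monotonicity and $f^n(T)=T$, and pushes $f^n(x)$ into the given component by minimality of $M$. Moreover, the statement you did prove --- the conclusion for every component of $X\setminus T$ meeting $D$, equivalently for every component of $D\setminus T$ --- is the only form the paper ever uses: in Theorem~\ref{th3}, assertion (i) is invoked for components of $D\setminus T_n$ and to verify hypothesis (c) of Lemma~\ref{hausd1} with $D$ as the ambient dendrite. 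Read as a proof of that corrected statement, your proposal is complete.

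On (ii), your compactness argument is correct and is more self-contained than the paper's treatment, which simply cites the literature on dendrites with a closed set of endpoints; your proof also has the merit of being independent of (i), so it is unaffected by the problem above.
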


%\medskip
\begin{proof}
(i) Let $x\in (X \setminus T)\cap M$ such that $r_T(x)=p$.\ By Lemma \ref{lem1}, $\forall n\in \mathbb{N},\ f^n([p,x])=[f^n(p),f^n(x)]$ and we have $[f^n(p),f^n(x)]\cap T=\{f^n(p)\}$. Since otherwise, there is $z\in (f^n(p),f^n(x)]\cap T$. As, $f^n(T)=T,\ \exists z_{-n}\in T$ such that $f^n(z_{-n})=z.$ Also there is $z_{-n}'\in (p,x]$ such that $f^n(z_{-n}')=z.$ Hence, $f^{-n}(z)\supset [z_{-n}',z_{-n}]\ni p$ (since $f^{-n}(\{z\})$ is connected). So, $f^n(p)=z$, a contradiction. Take $C$ a connected component of $X\setminus T$ then $C\cap M\neq \emptyset$ and by minimality of $M$ there is $n\in \mathbb{N}$ such that $f^n(x)\in C$. So by the fact that $[f^n(p),f^n(x)]\cap T=\{f^n(p)\},\ r_T(f^n(x))=f^n(p)$. Recall that the map $r_T$  is constant on $C$. Hence $r_T(C)=\{f^n(p)\}$.\\
(ii) By assertion (ii) of Lemma \ref{lem3}, $D$ is a dendrite where its set of endpoints $\textrm{End}(D)=M$ is closed then $ord_D(x)$ is finite for all $x\in D$ (see \cite{Charatonic}).
\end{proof}

%\medskip
\begin{lem}\label{hausd1} Let $(X,d)$ be a dendrite with set of endpoints $\textrm{End}(X)$ closed and let $(X_n)_{n\in\mathbb{N}}$
be a sequence of trees in $X$ satisfying the following properties:
\begin{itemize}
 \item[(a)] $X=\overline{\bigcup_{n\in\mathbb{N}} X_n}$,
\item[(b)] $X_n\subset X_{n+1}$, $\forall n\in\mathbb{N}$,
\item[(c)] For any connected component $C$ of $X\setminus X_n$, $r_{X_n}(C)=\{e_n\}\subset \textrm{End}(X_n)$.
\end{itemize}
Then $\underset{n\to+\infty}\lim \textrm{End}(X_n) = \textrm{End}(X)$ (in the Hausdorff metric).
\end{lem}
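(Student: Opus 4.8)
The plan is to reduce the Hausdorff convergence $\textrm{End}(X_n)\to\textrm{End}(X)$ to a single uniform estimate on the first point maps $r_{X_n}$, and then to read off each of the two one-sided inclusions defining the Hausdorff metric from that estimate together with condition (c). The workhorse I would establish first is a uniform retraction bound: for every $\eps>0$ there is $N$ with $\sup_{x\in X}d\big(x,r_{X_n}(x)\big)\le\eps$ for all $n\ge N$. Given $\eps$, choose $\delta=\delta(\eps)$ as in Lemma \ref{lem8}. By (b) the open sets $U_n=\{x\in X:d(x,X_n)<\delta\}$ increase with $n$, and by (a) they cover $X$, since $d(x,X_n)$ decreases to $d(x,X)=0$; compactness of $X$ then yields $X=U_N$ for some $N$. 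For $n\ge N$ and $x\in X$, pick $z\in X_n$ with $d(x,z)<\delta$; as $z\in X_n$, the point $r_{X_n}(x)$ lies on the arc $[x,z]$, so $d(x,r_{X_n}(x))\le\mathrm{diam}([x,z])<\eps$ by Lemma \ref{lem8}. This bound drives everything that follows.

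For the inclusion $\textrm{End}(X)\subset B(\textrm{End}(X_n),\eps)$, the point I would check is that $r_{X_n}$ sends endpoints of $X$ to endpoints of $X_n$. If $e\in\textrm{End}(X)\cap X_n$, then $\mathrm{ord}_{X_n}(e)\le\mathrm{ord}_X(e)=1$ forces $e\in\textrm{End}(X_n)$ and $r_{X_n}(e)=e$; if $e\notin X_n$, then $e$ lies in a component $C$ of $X\setminus X_n$, and condition (c) gives $r_{X_n}(e)=r_{X_n}(C)\in\textrm{End}(X_n)$. Either way the uniform bound gives $d(e,\textrm{End}(X_n))\le d(e,r_{X_n}(e))\le\eps$ for all $n\ge N$, uniformly in $e$.

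The delicate half is $\textrm{End}(X_n)\subset B(\textrm{End}(X),\eps)$. Take $e'\in\textrm{End}(X_n)$; if $e'\in\textrm{End}(X)$ there is nothing to do, so assume $\mathrm{ord}_X(e')\ge2$. Since $e'$ is a leaf of $X_n$, the connected set $X_n\setminus\{e'\}$ sits in a single component $C_0$ of $X\setminus\{e'\}$, so any other component $W\ne C_0$ is disjoint from $X_n$ and satisfies $\overline{W}=W\cup\{e'\}$. As $X$ is a nondegenerate continuum, $W$ is nondegenerate, so the subdendrite $\overline{W}$ has an endpoint $e\ne e'$, necessarily $e\in W$. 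Here is the crux: $e$ is in fact an endpoint of $X$. Because $W$ is open in $X$, the dendrite $\overline{W}$ contains a neighbourhood of $e$, so one can write $X\setminus\{e\}=(\overline{W}\setminus\{e\})\cup(X\setminus W)$ as a union of two connected sets that both contain $e'$, hence connected, giving $\mathrm{ord}_X(e)=1$. Finally $[e,e']\subset\overline{W}$ meets $X_n$ only at $e'$, so $r_{X_n}(e)=e'$, and the uniform bound yields $d(e',\textrm{End}(X))\le d(e',e)=d(e,r_{X_n}(e))\le\eps$ for $n\ge N$.

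Combining the two inclusions gives $d_H(\textrm{End}(X_n),\textrm{End}(X))\le\eps$ for $n\ge N$, where the hypothesis that $\textrm{End}(X)$ is closed is exactly what guarantees this is an honest Hausdorff distance between elements of $2^X$. The step I expect to be the main obstacle is this last direction: one must manufacture, inside each component $W$ of $X\setminus X_n$ hanging off a leaf $e'$ of $X_n$, a genuine endpoint $e$ of the whole dendrite $X$ whose first point projection is exactly $e'$, and justify carefully that an endpoint of the subdendrite $\overline{W}$ lying in the open set $W$ really has order one in $X$. The remaining estimates are a uniform-continuity argument powered entirely by the bound of the first step.
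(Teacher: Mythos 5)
Your proposal is correct and takes essentially the same route as the paper's proof: uniform approximation of $X$ by the trees $X_n$ via Lemma \ref{lem8}, the inclusion $\textrm{End}(X)\subset B(\textrm{End}(X_n),\eps)$ from the first point map together with condition (c), and the reverse inclusion by locating an endpoint of $X$ inside a complementary component hanging off the leaf $e'$ and projecting onto it. Your ``crux'' step simply makes explicit what the paper asserts without justification when it takes ``any point $e\in C\cap \textrm{End}(X)$'' in such a component (your $W$ coincides with the paper's $C$), so the two arguments agree in substance, with yours filling in that gap and the uniform retraction bound in more detail.
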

\begin{proof} Let $\eps>0$. By Lemma \ref{lem8}, there exists $0<\delta<\eps$ such that if $d(x,y)<\delta$
then $\textrm{diam}([x,y])<\eps$. By (a), it is easy to see that $\underset{n\to+\infty}\lim X_n=X$, then there is $n_0>0$ such that
$d_H(X_n,X)<\delta$ for $n\geq
n_0$. Take $n\geq n_0$. Let $e\in \textrm{End}(X)$, we shall prove that $d(e,\textrm{End}(X_n))<\eps$: If $e\in \textrm{End}(X_n)$ then clearly $d(e,\textrm{End}(X_n))=0$.
If not, the point $e$ belongs to some connected
component
of $X\setminus X_n$. Thus by (b), $e_n: = r_{X_n}(e)\in \textrm{End}(X_n)$ and so
$e_n\in [e,x]$ for any $x\in X_n$. Let $x\in X_n$ be such that
$d(e,x)< \delta$. Then, we have $d(e,e_n)\leq \textrm{diam}([e,x])<\eps$ and therefore $d(e,\textrm{End}(X_n))<\eps$.
 Take now $e_n\in \textrm{End}(X_n)$, we shall prove that  $d(e_n,
\textrm{End}(X))<\eps$: If $e_n\in \textrm{End}(X)$ then clearly $d(e_n, \textrm{End}(X))=0$. If $e_n\notin \textrm{End}(X)$,
then let $C$ be a connected component of $X\setminus X_n$ such that
$\overline{C}\cap X_n=\{e_n\}$. Then for any $x\in X_n$ and $e\in C\cap \textrm{End}(X)$, we have $e_n\in [e,x]$. So
take any point $e\in C\cap
\textrm{End}(X)$ and let $x\in X_n$ be such that $d(e,x)<\delta$. Then $\textrm{diam}
([e,x])<\eps$ and so $d(e,e_n)<\eps$. It follows that $d(e_n,
\textrm{End}(X))<\eps$. In result, we conclude that $d_H(\textrm{End}(X_n),\textrm{End}(X))<\eps$ for any $n\geq n_0$ and therefore $\underset{n\to+\infty}\lim d_H(\textrm{End}(X_n),\textrm{End}(X))=0$. \end{proof}

\begin{thm}\label{th3}Let $(X,d)$ be a dendrite and $f:X\rightarrow X$ a monotone dendrite map. Then any $\omega$-limit set $M$ of $f$ is contained in the Hausdorff closure of periodic orbits in its convex hull.
\end{thm}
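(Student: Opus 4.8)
The plan is to reduce to the case where $M$ is infinite (if $M$ is finite it is itself a periodic orbit and there is nothing to prove) and to realize $M=\textrm{End}(D)$ as a Hausdorff limit of endpoint sets $\textrm{End}([O])$ of convex hulls of \emph{single} periodic orbits $O\subseteq D$, where $D$ is the convex hull of $M$. Throughout I would work inside $D$: by Lemma \ref{lem4}, $D$ is strongly invariant and $f_{\mid D}$ is monotone; by Lemma \ref{lem3}(ii) its endpoint set is exactly $M$, which is closed; by Lemma \ref{lem9}(ii) all orders in $D$ are finite; and applying Theorem \ref{th4} to $f_{\mid D}$ gives $M\subseteq\overline{\textrm{P}(f_{\mid D})}$, so periodic points of $f$ lying in $D$ accumulate on every point of $M$.

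The key structural observation is that for any periodic orbit $O\subseteq D$ (a single cycle of period $\geq 2$) one has $\textrm{End}([O])=O$. Indeed $[O]$ is a subtree with $f([O])=[f(O)]=[O]$ by Lemma \ref{lem1}, hence strongly invariant; Lemma \ref{lem11}(ii) gives $f(\textrm{End}([O]))\supseteq\textrm{End}([O])$, while $\textrm{End}([O])\subseteq O$ and $f$ permutes the finite orbit $O$ bijectively, forcing $f(\textrm{End}([O]))=\textrm{End}([O])$. Thus $\textrm{End}([O])$ is a nonempty $f$-invariant subset of the single cycle $O$, so it equals $O$. Combined with Lemma \ref{lem9}(i) --- every component of $D\setminus[O]$ retracts under $r_{[O]}$ to a point of $O$ --- this shows that every component of $D\setminus[O]$ attaches to $[O]$ at a point of $\textrm{End}([O])$; that is, $T=[O]$ satisfies condition (c) of Lemma \ref{hausd1} relative to $D$.

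Next, fix $\varepsilon>0$, let $\delta=\delta(\varepsilon)$ be as in Lemma \ref{lem8}, and produce a single periodic orbit $O\subseteq D$ whose hull $[O]$ is $\delta$-dense in $D$ (note $[O]\subseteq D$ automatically). First I find a single periodic orbit that is $\delta'$-dense in $M$: choosing $y\in M$ and, by minimality of $M$, an index $L$ with $\{f^i(y):0\leq i\leq L\}$ a $\tfrac{\delta'}{2}$-net of $M$, I use uniform continuity of $f^0,\dots,f^L$ to pick $\eta$ so that $d(z,y)<\eta$ forces $d(f^i(z),f^i(y))<\tfrac{\delta'}{2}$ for $i\leq L$. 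Since $y\in\overline{\textrm{P}(f_{\mid D})}$ is not periodic, arbitrarily close to $y$ there are periodic points of period exceeding $L$ (a bounded-period sequence converging to $y$ would make $y$ periodic), so I pick such a $p\in D$ with $d(p,y)<\eta$; then $\{f^i(p):0\le i\le L\}\subseteq O_f(p)$ is a $\delta'$-net of $M$. A routine consequence of Lemma \ref{lem8} (arcs with $\delta'$-close endpoints are Hausdorff-close) upgrades this to $d_H([O],D)<\delta$ for $O:=O_f(p)$.

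Finally I run the single-tree version of the estimate in the proof of Lemma \ref{hausd1} for $T=[O]$, using that (c) holds by the structural observation, that $\textrm{End}(D)=M$ is closed, and that $d_H(T,D)<\delta$. For $e\in M\setminus\textrm{End}(T)$, its component retracts to $r_T(e)\in\textrm{End}(T)$, which lies on $[e,x]$ for some $x\in T$ with $d(e,x)<\delta$, so $d(e,\textrm{End}(T))\leq\textrm{diam}([e,x])<\varepsilon$; conversely each $e_T\in\textrm{End}(T)\setminus M$ is a cut point of $D$, so some branch $C$ of $D\setminus T$ with $\overline{C}\cap T=\{e_T\}$ contains an endpoint $e\in M$, and choosing $x\in T$ with $d(e,x)<\delta$ forces $e_T\in[e,x]$, whence $d(e_T,M)<\varepsilon$. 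Thus $d_H(O,M)=d_H(\textrm{End}(T),\textrm{End}(D))<\varepsilon$, so $M$ is a Hausdorff limit of periodic orbits contained in $D$. I expect the main obstacle to be manufacturing a \emph{single} periodic orbit spread $\delta'$-densely across $M$ (periodic points are only known to be dense, not to organize into spread-out cycles); this is exactly where minimality and the shadowing of a long dense orbit segment by a nearby high-period periodic point are essential. Pleasantly, the opposite Hausdorff direction --- that the orbit does not stray away from $M$ --- is not argued directly but falls out of the retraction estimate via condition (c).
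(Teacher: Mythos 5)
Your proposal is correct, and while it shares the paper's overall skeleton (reduce to infinite $M$, pass to the strongly invariant hull $D$ with $M=\textrm{End}(D)$ via Lemmas \ref{lem4} and \ref{lem3}, approximate $M$ by sets $\textrm{End}([O])=O$ for periodic orbits $O\subset D$, with Lemma \ref{lem9}(i) supplying condition (c)), the core construction is genuinely different. The paper builds, by induction anchored at one point $a\in M$, a \emph{nested} sequence of trees $T_n=[O_f(q_n)]$ with $T_n\subset T_{n+1}$ and $\bigcup_n T_n=\textrm{Cut}(D)$ --- nesting is forced by picking the next periodic point so close to $a$ that the arc to its image crosses $\textrm{End}(T_n)$, whence invariance swallows $T_n$ --- and then applies Lemma \ref{hausd1} verbatim. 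You instead produce, for each $\eps>0$, a \emph{single} periodic orbit that is $\delta'$-dense in $M$, by letting a periodic point $p$ near $y\in M$ shadow (uniform continuity of $f^0,\dots,f^L$) a finite orbit segment of $y$ that is $\tfrac{\delta'}{2}$-dense in $M$ by minimality; this makes $[O_f(p)]$ Hausdorff-close to $D$, after which you rerun the two-sided estimate from the \emph{proof} of Lemma \ref{hausd1} for this one tree. Note you cannot cite that lemma as stated (it requires an increasing sequence with dense union), but its proof only uses $d_H(T,D)<\delta$ together with (c), so your single-tree version is legitimate. What your route buys is a more quantitative argument that avoids the two delicate steps of the paper's induction (the nesting and the equality $\textrm{Cut}(D)=\bigcup_n T_n$); what it costs is unpacking Lemma \ref{hausd1} plus the shadowing argument, and two points you should make explicit: the nested choice of moduli $\delta'=\delta(\delta(\eps))$ from Lemma \ref{lem8} in your ``routine consequence,'' and the fact (also implicit in the paper's proof of Lemma \ref{hausd1}) that every component of $D\setminus T$ attached at a non-endpoint of $D$ contains a point of $M$, which is what drives your second Hausdorff direction. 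Incidentally, requiring $p$ to have period exceeding $L$ is superfluous, since $\{f^i(p):0\leq i\leq L\}\subseteq O_f(p)$ regardless of the period.
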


%\medskip
\begin{proof}
Note that $M$ is itself a periodic orbit when it is finite, then in this case there is nothing to prove. So suppose that $M$ is infinite. Let $D$ be the convex hull of $M$. Fix $a\in M=\textrm{End}(D)$ and let $\tau=\textrm{diam} (D)$, we are going to construct a sequence of trees $(T_n)_{n}$ in $D$ such that for each $n\in\mathbb{N}$, $\textrm{End}(T_n)$ is a periodic orbit, $T_n\subset T_{n+1}$, $d(a,T_n)\leq \frac{\tau}{n}$ and $\textrm{Cut} (D)=\cup_{n\in\mathbb{N}} T_n$.  Let $T_1=\{\gamma\}$ where $\gamma\in \textrm{Fix}(f)\cap \textrm{Cut}(D)$. The other trees will be obtained by induction as follow: Suppose that $T_n$ was already defined with the required properties mentioned above. Let $C$ be the connected component of $D\setminus T_n$ that contains $a$ and let $C^{'}$ be another connected component of $D\setminus T_n$ that does not contain $a$, hence $(C\cap M)\neq\emptyset\neq (C^{'}\cap M)$ and by assertion (i) of Lemma \ref{lem9}, any arc joining one point from $C$ to another one from $C^{'}$ has non-empty intersection with $\textrm{End}(T_n)$. By the minimality of $M$, there is $s\in\mathbb{N}$ for which $f^s(a)\in C^{'}$. Choose  $0<\delta<\frac{\tau}{n+1}$ such that $f^s(B(a,\delta)\cap D)\subset C^{'}$, set $V=B(a,\delta)\cap D$. By  Lemma \ref{lem4} and Theorem \ref{th1}, there is $q_n\in V\cap \textrm{P}(f)$. Hence, $[q_n,f^s(q_n)]\cap \textrm{End}(T_n)\neq \emptyset$. We let then $T_{n+1}=[O_f(q_n)]$, so we get $T_n\subset T_{n+1}$ and $d(a,T_{n+1})<\frac{\tau}{n+1}$. Now lets prove that $\textrm{Cut} (D)=\cup_{n\in\mathbb{N}} T_n$. Take $x\in \textrm{Cut}(D)$ then for some $b\in M$, $x\in [a,b]$. Let $\eps>0$ be such that if $z\in B(b,\eps)$ then $x\in[a,z]$. Again by the minimality of $M$, for some $k\in \mathbb{N}$, $f^k(a)\in B(b,\eps)$ and by the continuity of $f^k$, there is $\mu>0$ such that $f^k(B(a,\mu))\subset B(b,\eps)$.
Suppose that $Q$ is the connected component of $D\setminus \{x\}$ that contains $a$. Then $Q\cap B(a,\mu)$ is an open neighborhood of $a$ in $D$. Hence, there is $n\in\mathbb{N}$ such that for some $p_n\in Q\cap B(a,\mu)$, thus $[p_n,f^k(p_n)]\ni x$, it follows that $x\in T_n$. Now by assertion (i) of Lemma \ref{lem9}, the sequence of trees $(T_n)_n$ fulfilled condition (c) in Lemma \ref{hausd1}.  And by construction, the sequence $(T_n)_n$ fulfilled either condition (a) and (b) of Lemma \ref{hausd1}, hence we get $\underset{n\to +\infty}\lim d_H(\textrm{End}(T_n),M)=0$.

\end{proof}

\medskip
It is known that, if $X$ is a dendrite then the family of all $\omega$-limit sets of a continuous map $f:X\rightarrow X$  endowed with the Hausdorff metric need not to be compact (\cite{kocan}, Theorem 2). In the case of monotone dendrite map, the family of $\omega$-limit sets coincide with the family of minimal sets and we show that it is a compact subset of $2^X$.

\begin{thm}\label{thm2}
Let $(X,d)$ be a dendrite and $f:X\rightarrow X$ a monotone dendrite map. Then the set of all minimal sets endowed with the Hausdorff metric is compact.
\end{thm}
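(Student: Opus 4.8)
The plan is to use that $2^X$ is compact (recall $X$ is a compact metric space), so that the family $\mathcal{M}$ of all minimal sets will be compact as soon as it is shown to be closed in $2^X$. Note first that, for a monotone dendrite map, $\mathcal{M}$ coincides with the family of all $\omega$-limit sets: every $\omega_f(x)$ is minimal by Theorem \ref{th1}(i), and conversely if $M$ is minimal then $M=\omega_f(x)$ for any $x\in M$. So I would take a sequence $(M_k)_k$ of minimal sets with $M_k\to M$ in $d_H$ and prove that $M$ is minimal. That $M$ is a nonempty closed set is automatic. Since $2^f$ is continuous and $f(M_k)=M_k$, passing to the limit gives $f(M)=M$, so $M$ is strongly invariant. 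Finally, since each $M_k\subset \Lambda(f)=\overline{\textrm{P}(f)}=\textrm{RR}(f)$ by Theorem \ref{th4}, and a Hausdorff limit of subsets of the closed set $\overline{\textrm{P}(f)}$ remains inside it, we get $M\subset \textrm{RR}(f)$. In particular every $x\in M$ is recurrent, so $x\in\omega_f(x)\subset M$ and $\omega_f(x)$ is minimal; thus $M=\bigcup_{x\in M}\omega_f(x)$ is a union of minimal sets, and everything reduces to showing that this union consists of a single minimal set.

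Suppose, for contradiction, that $M$ contains two distinct (hence disjoint) minimal sets $N_1$ and $N_2$. The first key remark is that distinct minimal sets are never proximal: if $u\in N_1$, $v\in N_2$ and $\liminf_n d(f^n(u),f^n(v))=0$, then choosing $n_i\to\infty$ with $d(f^{n_i}(u),f^{n_i}(v))\to 0$ and passing to a convergent subsequence $f^{n_i}(u)\to p$ would force $f^{n_i}(v)\to p$ as well, so $p\in N_1\cap N_2=\emptyset$, absurd. Hence $\inf_n d(f^n(u),f^n(v))>0$ for all such $u,v$. Now fix $N=N_1$, let $D=[N]$ be its convex hull, which is a strongly invariant sub-dendrite with $f_{\mid D}$ monotone by Lemma \ref{lem4} and with $\textrm{End}(D)=N$ by Lemma \ref{lem3}, and let $r=r_D$ be the first point map. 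Because $D$ is closed and strongly invariant and each $M_k$ is minimal, for every $k$ one has the dichotomy $M_k\subset D$ or $M_k\cap D=\emptyset$; in the second case $r(M_k)$ is a periodic orbit by Lemma \ref{lem10}. I would analyze these two regimes (along subsequences) separately.

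In the regime $M_k\subset D$ the limit satisfies $M\subset D=[N]$ with $N=\textrm{End}(D)\subset M$, so a point $z\in M\setminus N$ would be a cut point of $D$ lying in $(p,q)$ for some $p,q\in N$. Since $N\subset\overline{\textrm{P}(f)}$ by Theorem \ref{th1}(ii), one can pick periodic points $p',q'$ close to $p,q$ and, using Lemma \ref{lem5}, arrange $z\in[p',q']$ with $f^m([p',q'])=[p',q']$ for a common period $m$; then $f^m_{\mid [p',q']}$ is a monotone interval map, so $\omega_{f^m}(z)$ is finite by Lemma \ref{lem0}, forcing $\omega_f(z)$ finite, i.e. $z$ periodic. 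To rule out even a periodic $z\in\textrm{Cut}(D)$ in the limit, I would combine continuity of the convex hull ($[M_k]\subset D$ and $[M_k]\to[M]=D$) with the order finiteness of $D$ from Lemma \ref{lem9}(ii) to show, in the spirit of Lemma \ref{hausd1}, that $\textrm{End}([M_k])\to\textrm{End}(D)=N$, whence $M=\lim M_k=N$. In the regime $M_k\cap D=\emptyset$, the retractions $r(M_k)$ are periodic orbits converging (by continuity of $r$ on the hyperspace) to $r(M)\supset N$, and I would use the non-proximality of $N_1,N_2$ together with monotonicity to derive the contradiction.

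The main obstacle, and the reason monotonicity is essential, is the control of high iterates on approximating points: choosing $u_k,v_k\in M_k$ with $u_k\to u\in N_1$ and $v_k\to v\in N_2$, minimality of $M_k$ gives times $n_k$ with $f^{n_k}(u_k)\to v$, but one cannot transfer this to $f^{n_k}(u)$ by ordinary continuity since $n_k\to\infty$. Here Lemma \ref{lem1} is decisive: it turns the small arc $[u,u_k]$ into the arc $[f^{n_k}(u),f^{n_k}(u_k)]$, so the whole configuration stays arc-like and can be compared with the rigid structure of minimal sets (at most two points of a minimal set on any arc, Lemma \ref{lem3}(i), and the trapping of orbits inside invariant arcs as above). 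Pushing this comparison to its conclusion is exactly the step that forces the extra minimal set $N_2$ to carry a finite orbit, contradicting $N_1\neq N_2$, and thereby completes the proof that $M$ is a single minimal set and that $\mathcal{M}$ is closed, hence compact.
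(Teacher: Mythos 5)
Your setup is sound and matches the paper's: closedness in the compact space $(2^X,d_H)$ suffices, $f(M)=M$ follows from continuity of $2^f$, and $M\subset\overline{\textrm{P}(f)}=\textrm{RR}(f)$ follows since each $M_k$ lies in the closed set $\overline{\textrm{P}(f)}$ (this step is even slightly more direct than the paper, which obtains it after first replacing the $M_k$ by periodic orbits). Your dichotomy $M_k\subset D$ or $M_k\cap D=\emptyset$ is also correct, since $M_k\cap D$ is closed and invariant inside the minimal set $M_k$. But the heart of the proof --- deriving a contradiction from $N_1\cup N_2\subset M$ --- is not actually carried out in either regime, and the steps you lean on are exactly the ones that fail.

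In the regime $M_k\subset D$, your conclusion rests on $\textrm{End}([M_k])\to\textrm{End}(D)$. The endpoint map is not continuous with respect to the Hausdorff metric (a triod with one shrinking arm converges to an arc, yet its three endpoints converge to three points, not to the two endpoints of the arc), and Lemma \ref{hausd1} cannot be invoked ``in spirit'': its hypotheses require an increasing nested sequence of trees satisfying the retraction condition (c), none of which your sets $[M_k]$ satisfy --- they are hulls of possibly infinite (Cantor) minimal sets, they are not nested, and condition (c) is precisely what the paper works to arrange in Theorem \ref{th3} via Lemma \ref{lem9}(i). (The parenthetical claim $[M_k]\to[M]$ is fillable from Lemma \ref{lem12}, but it does not give you endpoint convergence.) In the regime $M_k\cap D=\emptyset$ you give no argument at all, only the intention to ``use non-proximality and monotonicity''; moreover the target you announce --- forcing $N_2$ to carry a finite orbit --- would not even be a contradiction, since nothing a priori prevents $M$ from containing two distinct periodic orbits, and that case must be excluded as well. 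What is missing is the paper's key reduction: by Theorem \ref{th3} every minimal set is a Hausdorff limit of periodic orbits lying in its convex hull, so one may assume from the start that all the $M_k$ are periodic orbits. With finite approximants the paper retracts onto $H=[K_1\cup K_2]$, uses Lemma \ref{lem10} to see that $r_H(M_k)$ is a periodic orbit, and then reaches a contradiction either from three points of a periodic orbit lying on one arc (Lemma \ref{lem3}(i), when one hull contains the other) or from a distance separation argument (when neither hull contains the other). Without that reduction, your infinite approximants $M_k\subset D$ leave you with no structural control, which is exactly where your sketch breaks down.
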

\begin{proof}
Let $(M_n)_{n}$ be a sequence of minimal sets that converges in the Hausdorff metric to $M$. Then by the compactness of $(2^X,d_H)$, $M$ is a compact set. By Theorem \ref{th3}, one may assume that all $M_n$ are periodic orbits. By the continuity of $2^f, f(M)=M$. Let $D$ be the convex hull of $M$. Thus, $D=\displaystyle\cup_{a\neq b\in M}[a,b]$. Therefore, $f(D)=\displaystyle\cup_{a\neq b\in M}[f(a),f(b)]=D$ since $f(M)=M$.  As $M$ is in the closure of periodic orbits in the Hausdorff metric, we get that $M\subset \overline{\textrm{P}(f)}$ and from Theorem \ref{th4}, we have  $\overline{\textrm{P}(f)}=\textrm{RR}(f)$ thus $M\subset \textrm{RR}(f)$ and so $M$ is the union of minimal sets for $f$. We are going  to show that $M$ is in fact a unique minimal set for $f$. Suppose that $M$ is not minimal, then there exists two minimal sets $K_1\neq\ K_2$ such that $K_1\cup K_2\subseteq M$. Denote by $H_1=[K_1],\ H_2=[K_2]$ and $H=[K_1\cup K_2]$. Denote by $K=r_H(M)$, by the continuity of $r_H$, we have $\underset{n\rightarrow +\infty}\lim r_H(M_n)=r_H(M)=K$. Denote by $N_n=r_H(M_n), \forall n\in \mathbb{N}$. We distinguish two cases, both of them lead us to a contradiction.

\textit{Case 1:} $H_1\subset H_2$ or $H_2\subset H_1$.\\
Suppose that $H_1\subset H_2$, then $H=H_2$ and $K_1\subset \textrm{Cut}(H)$. Now, as $\textrm{End}(H)=K_2$ is closed, each point in $H$ has a finite order in $H$ and the set of branch point of $H$ is discrete (see \cite{Charatonic}). Take a point $c\in K_1$, then $c$ is a Cut point of $H$. Suppose that $r=ord_H(c)$. Then there is a neighborhood $T_r$ of $c$ in $H$ such that $T_r\cap K_2=\emptyset$ and $T_r=\displaystyle\cup_{i=1}^{r}[c,c_i]$ where $[c,c_i]\cap [c,c_j]=\{c\}$ for any $i\neq j$. For each $i\in \{1,...,r\}$, let $C_i$ be the connected component of $H\setminus (T_r\setminus \{c_i\})$ that contains $c_i$ and let $b_i\in C_i\cap K_2.$ Let $\varepsilon> 0$ be such that $B(b_i,\varepsilon)\cap H\subset C_i$ for all $i\in\{1,...,r\}$ and $B(c,\eps)\cap H\subset T_r$. As $\underset{n\to+\infty}\lim d_H(N_n,K)=0$, there exists $n\in \mathbb{N}$ such that $d_H(N_n,K)<\varepsilon$. Thus, for all $i\in \{1,\dots,r\}$ there exist $b_i(n)\in N_n,\ c(n)\in N_n$ such that $d(b_i(n),b_i)< \varepsilon$ and $d(c(n),c)< \varepsilon$.\ Recall that $H$ is strongly invariant as it is the convex hull of the minimal subset $K_2$ (assertion (i) of Lemma \ref{lem4}) and so $N_n=r_{H}(M_n)$  is a periodic orbit by Lemma \ref{lem10}. For some $i\in \{1,\dots,r\}$, we have $c(n)\in [c,c_i]$. Take $j\neq i$, then $c(n)\in [b_i(n),b_j(n)]$. Recall that the three points $c(n),b_i(n)$ and $b_j(n)$ belong to  $N_n$ which is a periodic orbit, a contradiction with Lemma \ref{lem3}.

\textit{Case 2:} $H_2\nsubseteq H_1$ and $H_1\nsubseteq H_2$.\\
In this case, $\textrm{End} (H)=K_1\cup K_2$ and it can be proved easily that $H$ is strongly invariant and so $N_n=r_{H}(M_n)$  is a periodic orbit by Lemma \ref{lem10}. As $K_2\cap H_1=\emptyset$, the number $\eta=d(K_2,H_1)>0$. So for any closed subset $A$ of $H_1$, $d(A,K_2)\geq\eta$. Take any point $z\in K_1$, then for some $\delta>0$, $(B(z,\delta)\cap H)\subset H_1$. For $n$ large enough, we have $d_H(N_n,K)<\delta$ hence $N_n\cap B(z,\delta)\neq\emptyset$ (since $z\in K_1\subset K$) and so $N_n\cap H_1\neq\emptyset$. Recall that $N_n$ is a periodic orbit, thus $N_n\subset H_1$ and so $d(N_n,K_2)>\eta$ for $n$ large enough. However as $\underset{n\to +\infty}\lim d_H(N_n,K)=0$ and $K_2\subset K$, for $n$ large enough $d(N_n,K_2)<\eta$, a contradiction.
\end{proof}

\section{\bf Induced maps on the hyperspaces $\mathcal{F}_n(X)$ }

Given a dendrite $X$ and $n\in\mathbb{N}$. Let $\mathcal{F}_n(X)=\{A\in 2^X: A\ \text{has\ at\ most}\ n\ \text{points}\}$ and denote by $\mathcal{F}_n(f)=2^f_{\mid \mathcal{F}_n(X)}$.

\medskip
\begin{lem}\label{lem13}Let $f:X\rightarrow X$ be a monotone dendrite map. Then for any $x\in X$ there exists $y\in \omega_f(x)$ such that $(x,y)$ is an asymptotic pair.
\end{lem}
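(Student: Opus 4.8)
The plan is to reduce the statement to a shadowing problem inside the minimal system $\left(M,f_{\mid M}\right)$, where $M=\omega_f(x)$, and then to solve that problem using the structure of the adding machine. First I would record that $M$ is minimal by Theorem \ref{th1}, and that $\lim_{n\to\infty} d(f^n(x),M)=0$: indeed, if some subsequence stayed at distance $\geq\eps$ from $M$, a further convergent subsequence would produce a point of $\omega_f(x)=M$ at distance $\geq\eps$ from $M$, which is absurd. For each $n$ choose $e_n\in M$ with $d(f^n(x),e_n)=d(f^n(x),M)$, so that $d(f^n(x),e_n)\to 0$. Since $f(M)=M$ we have $f(e_n)\in M$, and uniform continuity of $f$ together with $d(f^n(x),e_n)\to 0$ gives $d\big(e_{n+1},f(e_n)\big)\leq d(f^{n+1}(x),M)+d\big(f(f^n(x)),f(e_n)\big)\to 0$. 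Thus $(e_n)_n$ is an \emph{asymptotic pseudo-orbit} of $f_{\mid M}$. Consequently, it suffices to produce $y\in M$ with $d(f^n(y),e_n)\to 0$, because then the triangle inequality yields $d(f^n(x),f^n(y))\to 0$, i.e. $(x,y)$ is asymptotic, and $y\in M=\omega_f(x)$ is exactly what is required.

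If $M$ is finite, the argument is immediate: put $\rho_0=\min\{d(u,v):u\neq v\in M\}>0$; for $n$ large the inequality $d(e_{n+1},f(e_n))<\rho_0$ forces $e_{n+1}=f(e_n)$, so the tail of $(e_n)_n$ coincides with a genuine orbit. Since $f_{\mid M}$ permutes the finite minimal orbit cyclically, one can choose $y\in M$ whose orbit realizes this tail, giving $e_n=f^n(y)$ for all large $n$ and hence $d(f^n(x),f^n(y))\to 0$.

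The infinite case is the heart of the matter. Here I would invoke the Corollary following Theorem \ref{th2}: $f_{\mid M}$ is topologically conjugate, via a homeomorphism $h:M\to\triangle_\alpha$, to an adding machine $f_\alpha$, which is translation by $(1,0,0,\dots)$ on the compact abelian group $\triangle_\alpha$. The point is that $\triangle_\alpha$ is profinite, so it carries a compatible, translation-invariant \emph{ultrametric} $d'$, uniformly equivalent to $d_\alpha$. Transporting the pseudo-orbit by $h$ and setting $\tilde e_n=h(e_n)$, uniform continuity of $h$ gives $d'\big(\tilde e_{n+1},f_\alpha(\tilde e_n)\big)\to 0$. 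Now set $h_n=f_\alpha^{-n}(\tilde e_n)$; translation-invariance of $d'$ yields $d'(h_{n+1},h_n)=d'\big(\tilde e_{n+1},f_\alpha(\tilde e_n)\big)\to 0$, and the ultrametric inequality upgrades this to the Cauchy estimate $d'(h_m,h_n)\leq\max_{n\leq i<m}d'(h_{i+1},h_i)\to 0$. Hence $h_n\to\tilde y$ for some $\tilde y\in\triangle_\alpha$, and then $d'\big(f_\alpha^n(\tilde y),\tilde e_n\big)=d'(\tilde y,h_n)\to 0$. Pulling back by $h^{-1}$ produces $y=h^{-1}(\tilde y)\in M$ with $d(f^n(y),e_n)\to 0$, completing the reduction.

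The main obstacle is precisely this last upgrade from ``consecutive errors tend to $0$'' to genuine asymptoticity along the \emph{whole} sequence: for a general equicontinuous minimal system (say an irrational rotation of the circle) an asymptotic pseudo-orbit need not be shadowed, since the errors may accumulate. What rescues the argument is that the relevant minimal system is an adding machine, whose profinite, non-archimedean structure provides a translation-invariant ultrametric and thereby turns the summability obstruction into a harmless $\max$. I would therefore be careful to justify the existence of such an ultrametric and its uniform equivalence with $d_\alpha$ (for instance, $d'(u,v)=2^{-\min\{i:u_i\neq v_i\}}$, which is unchanged under adding a fixed element because equal initial coordinates produce equal carries), as this is the only non-formal ingredient; everything else is triangle inequalities and the already established conjugacy.
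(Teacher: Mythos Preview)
Your argument is correct. The reduction to an asymptotic pseudo-orbit in $M$ is clean, the finite case is fine, and in the infinite case the translation-invariant ultrametric $d'(u,v)=2^{-\min\{i:\,u_i\neq v_i\}}$ on $\triangle_\alpha$ indeed satisfies $d'\leq d_\alpha\leq 2d'$ and is preserved by addition (equal initial blocks produce identical carries), so the Cauchy upgrade and the shadowing by $\tilde y=\lim f_\alpha^{-n}(\tilde e_n)$ go through exactly as you wrote. There is no circularity: the adding-machine Corollary you invoke is proved in Section~3 from Theorem~\ref{th4}, whose proof does not use Lemma~\ref{lem13}.

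Your route, however, is quite different from the paper's. The paper simply refers to the proof of Theorem~1.2 in \cite{Nagh}, which constructs the asymptotic partner $y$ \emph{geometrically}: using monotonicity, arcs to a fixed point, and periodic points accumulating on $\omega_f(x)$, one traps the orbit of $x$ in nested regions whose diameters shrink, and $y$ emerges as the intersection point. That argument never leaves the dendrite and does not need the Block--Keesling classification. Your argument, by contrast, forgets the dendrite entirely once $M\subset\mathrm{RR}(f)$ is known, and exploits instead the profinite group structure of $\triangle_\alpha$ to solve an abstract shadowing problem. What this buys you is generality: the same proof shows that \emph{any} continuous self-map of a compact metric space in which every $\omega$-limit set is a minimal set of regularly recurrent points has the asymptotic-pair property. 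What the paper's approach buys is self-containment and independence from the conjugacy theorem; in particular it does not rely on Theorem~\ref{th4}, so it could in principle be used as an input to that theorem rather than a consequence of it.
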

\begin{proof}
If $\omega_f(x)$ is finite then Lemma \ref{lem13} is obviously holds. When $\omega_f(x)$ is infinite, see the proof of Theorem 1.2 in \cite{Nagh} .
\end{proof}
\begin{lem}\label{lem14}Let $f:X\to X$ be a monotone dendrite map. Let $n\in \mathbb{N}$ and $A=\{x_1,x_2,...,x_n\}\in \mathcal{F}_n(X)$. Then there exists $B=\{y_1,\dots,y_n\}\in \textrm{RR}(\mathcal{F}_n(f))$ asymptotic to $A$ and such that $y_i\in \omega_f(x_i)$ for any $i\in\{1,\dots,n\}$.
\end{lem}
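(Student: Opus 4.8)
The plan is to build $B$ coordinatewise with Lemma \ref{lem13} and then verify the three required properties one at a time; the bulk of the work is already packaged in the earlier results.

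First I would apply Lemma \ref{lem13} to each $x_i$ to produce a point $y_i\in\omega_f(x_i)$ for which $(x_i,y_i)$ is an asymptotic pair, and set $B=\{y_1,\dots,y_n\}$. This immediately secures the condition $y_i\in\omega_f(x_i)$. Moreover each $y_i$ lies in $\omega_f(x_i)\subset\Lambda(f)$, and Theorem \ref{th4} gives $\Lambda(f)=\textrm{RR}(f)$, so every $y_i$ is regularly recurrent for $f$; this is the fact I will exploit for the recurrence of $B$.

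Next I would check that $(A,B)$ is asymptotic for $\mathcal{F}_n(f)$. Since $\mathcal{F}_n(f)^k(A)=\{f^k(x_1),\dots,f^k(x_n)\}$ and $\mathcal{F}_n(f)^k(B)=\{f^k(y_1),\dots,f^k(y_n)\}$, the definition of the Hausdorff metric yields $d_H(\mathcal{F}_n(f)^k(A),\mathcal{F}_n(f)^k(B))\le\max_{1\le i\le n}d(f^k(x_i),f^k(y_i))$, because each point of either set is within that distance of the point with the matching index in the other set. As each pair $(x_i,y_i)$ is asymptotic, every term $d(f^k(x_i),f^k(y_i))\to 0$, so this finite maximum tends to $0$ and $(A,B)$ is asymptotic.

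The remaining and central point is $B\in\textrm{RR}(\mathcal{F}_n(f))$. Fix $\eps>0$. For each $i$, regular recurrence of $y_i$ furnishes $N_i\in\mathbb{N}$ with $d(y_i,f^{kN_i}(y_i))<\eps$ for all $k\in\mathbb{N}$, and I would take $N=\mathrm{lcm}(N_1,\dots,N_n)$. The synchronizing observation is that the defining inequality quantifies over all $k$, so any multiple of a valid period is again a valid period: as $N$ is a common multiple of the $N_i$, we still have $d(y_i,f^{kN}(y_i))<\eps$ for every $k$ and every $i$. The same Hausdorff estimate as above then gives $d_H(B,\mathcal{F}_n(f)^{kN}(B))\le\max_i d(y_i,f^{kN}(y_i))<\eps$ for all $k$, which is exactly regular recurrence of $B$ at scale $\eps$; letting $\eps\to 0$ finishes the proof. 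I do not expect a serious obstacle here: the only delicate steps are merging the individual periods $N_i$ into a single $N$ (which works precisely because a common multiple inherits the ``for all $k$'' bound) and noting that the coordinatewise Hausdorff bound remains valid even when some of the $y_i$, or their iterates, coincide so that $B$ has fewer than $n$ points.
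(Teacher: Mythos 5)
Your proof is correct and follows the paper's skeleton: Lemma \ref{lem13} applied coordinatewise gives $y_i\in\omega_f(x_i)$ asymptotic to $x_i$, Theorem \ref{th4} gives $y_i\in\Lambda(f)=\textrm{RR}(f)$, and the coordinatewise Hausdorff estimate gives asymptoticity of $(A,B)$ under $\mathcal{F}_n(f)$. The one genuine difference is the final step, $B\in\textrm{RR}(\mathcal{F}_n(f))$: the paper disposes of it by citing an external result ((\cite{Mendez}, Theorem 5), which supplies exactly the implication that a finite set of regularly recurrent points of $f$ is a regularly recurrent point of $\mathcal{F}_n(f)$), whereas you prove it from scratch. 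Your synchronization is the right one: because the definition of regular recurrence quantifies over \emph{all} multiples $kN_i$ of the period, any common multiple $N=\mathrm{lcm}(N_1,\dots,N_n)$ remains a valid period for every $y_i$ simultaneously, and the estimate $d_H(\{u_1,\dots,u_n\},\{v_1,\dots,v_n\})\le\max_{1\le i\le n} d(u_i,v_i)$ (which, as you note, survives coincidences among the points) transfers this to the induced map. What your version buys is self-containedness: the lemma no longer rests on the G\'omez-Rueda--Illanes--M\'endez theorem, whose relevant direction is precisely your two-line lcm argument. What the paper's version buys is brevity, and the same citation is reused later in the proof of Corollary \ref{cor1}(b), where your direct argument would substitute equally well.
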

\begin{proof}
 Let $n\in \mathbb{N}$ and $A=\{x_1,x_2,...,x_n\}\in \mathcal{F}_n(X)$. Then by Lemma \ref{lem13}, for each $1\leq i\leq n$, there exists $y_i\in \omega_f(x_i)$ such that $(x_i,y_i)$ is an asymptotic pair for $f$. Denote by $B=\{y_1,y_2,...,y_n\},$ then $(A,B)$ is an asymptotic pair for $\mathcal{F}_n(f)$. Recall that $\Lambda(f)=\textrm{RR}(f)=\overline{\textrm{P}(f)}$ (Theorem \ref{th4}). So by (\cite{Mendez}, Theorem 5), $B\in \textrm{RR}(\mathcal{F}_n(f))$.
\end{proof}
\begin{thm}\label{prop1} Let $(X,d)$  be a dendrite and  $f:X\rightarrow X$  a monotone dendrite map. Then for any $A\in \mathcal{F}_n(X)$, there exists $B\in \textrm{RR}(\mathcal{F}_n(f))\cap \overline{\textrm{P}(\mathcal{F}_n(f))}$ asymptotic to $A$.
\end{thm}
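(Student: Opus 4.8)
The plan is to recycle the point $B$ already manufactured in Lemma \ref{lem14} and verify the one extra membership it does not yet provide, namely $B\in\overline{\textrm{P}(\mathcal{F}_n(f))}$. First I would apply Lemma \ref{lem14} to $A=\{x_1,\dots,x_n\}$ to obtain $B=\{y_1,\dots,y_n\}\in\textrm{RR}(\mathcal{F}_n(f))$ which is asymptotic to $A$ and satisfies $y_i\in\omega_f(x_i)$ for each $i$. Since this $B$ is already regularly recurrent and asymptotic to $A$, the entire remaining content of the theorem is to show that $B$ lies in the Hausdorff closure of the periodic points of $\mathcal{F}_n(f)$.

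For this I would exploit the rigidity theorem for the base map. Each $y_i$ belongs to $\omega_f(x_i)\subset\Lambda(f)$, and by Theorem \ref{th4} we have $\Lambda(f)=\overline{\textrm{P}(f)}$; hence every $y_i$ is a limit of periodic points of $f$. So for each $k\in\mathbb{N}$ and each $i\in\{1,\dots,n\}$ I pick $p_i^{(k)}\in\textrm{P}(f)$ with $d(p_i^{(k)},y_i)<\frac{1}{k}$ and set $B_k=\{p_1^{(k)},\dots,p_n^{(k)}\}\in\mathcal{F}_n(X)$. The key observation is that each $B_k$ is a periodic point of $\mathcal{F}_n(f)$: letting $N_k$ be a common multiple of the $f$-periods of $p_1^{(k)},\dots,p_n^{(k)}$, we have $f^{N_k}(p_i^{(k)})=p_i^{(k)}$ for all $i$, so $\mathcal{F}_n(f)^{N_k}(B_k)=f^{N_k}(B_k)=B_k$, whence $B_k\in\textrm{P}(\mathcal{F}_n(f))$.

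It then remains to confirm the convergence $B_k\to B$, which follows from coordinatewise convergence: both one-sided suprema defining $d_H(B_k,B)$ are bounded above by $\max_{1\leq i\leq n} d(p_i^{(k)},y_i)<\frac{1}{k}$, so $d_H(B_k,B)\to 0$ and therefore $B\in\overline{\textrm{P}(\mathcal{F}_n(f))}$. Combining this with $B\in\textrm{RR}(\mathcal{F}_n(f))$ from Lemma \ref{lem14} gives $B\in\textrm{RR}(\mathcal{F}_n(f))\cap\overline{\textrm{P}(\mathcal{F}_n(f))}$ asymptotic to $A$, as desired. There is no deep obstacle here, since the statement is essentially a packaging of Lemma \ref{lem14} with Theorem \ref{th4}; the only point demanding care is the passage from density of $f$-periodic points inside each $\omega$-limit set to genuine periodic points of the induced map, which rests on the elementary fact that a finite set of $f$-periodic points is itself periodic for $\mathcal{F}_n(f)$ (take a common multiple of the periods) together with the compatibility of coordinatewise convergence with the Hausdorff metric on $\mathcal{F}_n(X)$.
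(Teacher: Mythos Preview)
Your proposal is correct and follows essentially the same approach as the paper: apply Lemma~\ref{lem14} to obtain $B\in\textrm{RR}(\mathcal{F}_n(f))$ asymptotic to $A$ with $y_i\in\omega_f(x_i)$, then use Theorem~\ref{th4} to approximate each $y_i$ by a periodic point $p_i$ and observe that $\{p_1,\dots,p_n\}$ is periodic for $\mathcal{F}_n(f)$ and Hausdorff-close to $B$. Your write-up is slightly more explicit about the common-multiple argument and the Hausdorff estimate, but there is no substantive difference.
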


\begin{proof}
 Let $A=\{x_1,x_2,...,x_n\}\in \mathcal{F}_n(X)$. Then by Lemma \ref{lem14}, there exists $B=\{y_1,y_2,...,y_n\}\in \textrm{RR}(\mathcal{F}_n(f))$ such that $(A,B)$ is an asymptotic pair for $\mathcal{F}_n(f)$ and $y_i\in \omega_f(x_i)$ for any $i\in\{1,\dots,n\}$. By Theorem \ref{th4}, for each $i\in\{1,\dots,n\}$, there is a periodic point $p_i$ of $f$ in the ball $B(y_i,\eps)$, hence the set $P=\{p_1,\dots,p_n\}$ is a periodic point of $\mathcal{F}_n(f)$ and $d_H(B,P)<\eps$. It follows that $B\in \overline{\textrm{P}(\mathcal{F}_n(f))}$.
\end{proof}
\begin{cor}\label{cor1}Let $f:X\rightarrow X$ be a monotone dendrite map. For each $n\in \mathbb{N}$, we have:
\begin{itemize}
\item[(a)] $\omega_{\mathcal{F}_n(f)}(A)$ is minimal for each $A\in \mathcal{F}_n(X)$.
\item[(b)] $\overline{\textrm{P}(\mathcal{F}_n(f))}=\Lambda(\mathcal{F}_n(f))=\textrm{RR}(\mathcal{F}_n(f))$.
\end{itemize}
\end{cor}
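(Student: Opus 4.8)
The plan is to exploit Theorem~\ref{prop1}, which attaches to every $A\in\mathcal{F}_n(X)$ a partner $B\in \textrm{RR}(\mathcal{F}_n(f))\cap\overline{\textrm{P}(\mathcal{F}_n(f))}$ that is asymptotic to it, much as Lemma~\ref{lem13} does for the base map. For (a) I would first note that $(\mathcal{F}_n(X),d_H)$ is a compact metric space and $\mathcal{F}_n(f)$ is continuous, so the general facts recalled in the introduction apply verbatim. Given $A$, take the asymptotic partner $B\in\textrm{RR}(\mathcal{F}_n(f))$ from Theorem~\ref{prop1}. Since $(A,B)$ is asymptotic, a routine triangle-inequality argument shows $\omega_{\mathcal{F}_n(f)}(A)=\omega_{\mathcal{F}_n(f)}(B)$: any subsequential limit of $\mathcal{F}_n(f)^{m_i}(A)$ is also a limit of $\mathcal{F}_n(f)^{m_i}(B)$ and conversely. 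As $B$ is regularly recurrent, $\omega_{\mathcal{F}_n(f)}(B)$ is minimal (the general fact from the introduction, \cite{lsB}), hence so is $\omega_{\mathcal{F}_n(f)}(A)$.

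For (b) I would close a cycle of inclusions, always using the trivial chain $\textrm{P}\subset\textrm{RR}\subset\textrm{R}\subset\Lambda$ for the map $\mathcal{F}_n(f)$. To obtain $\Lambda(\mathcal{F}_n(f))\subseteq\textrm{RR}(\mathcal{F}_n(f))$, take $C\in\omega_{\mathcal{F}_n(f)}(A)$; by part (a) this $\omega$-limit set is minimal, so every point of it is recurrent, in particular $C\in\textrm{R}(\mathcal{F}_n(f))$. Applying Theorem~\ref{prop1} to $C$ yields $B\in\textrm{RR}(\mathcal{F}_n(f))$ asymptotic to $C$, and Lemma~\ref{lem17} (with $B$ regularly recurrent and $C$ recurrent) forces $C=B\in\textrm{RR}(\mathcal{F}_n(f))$. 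Together with $\textrm{RR}\subseteq\Lambda$ this gives $\textrm{RR}(\mathcal{F}_n(f))=\textrm{R}(\mathcal{F}_n(f))=\Lambda(\mathcal{F}_n(f))$. The same Lemma~\ref{lem17} argument applied to a point $C\in\textrm{RR}(\mathcal{F}_n(f))$ and its partner $B\in\textrm{RR}(\mathcal{F}_n(f))\cap\overline{\textrm{P}(\mathcal{F}_n(f))}$ gives $C=B$, hence $\textrm{RR}(\mathcal{F}_n(f))\subseteq\overline{\textrm{P}(\mathcal{F}_n(f))}$.

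It then remains to prove $\overline{\textrm{P}(\mathcal{F}_n(f))}\subseteq\textrm{RR}(\mathcal{F}_n(f))$, which I expect to be the main obstacle, since it is equivalent to $\textrm{RR}(\mathcal{F}_n(f))$ being closed. Here I would first identify the periodic points: $P=\{p_1,\dots,p_k\}\in\textrm{P}(\mathcal{F}_n(f))$ if and only if every $p_i\in\textrm{P}(f)$ (take $N$ a common multiple of the periods). Consequently a Hausdorff limit $C$ of sets $P_j\subseteq\textrm{P}(f)$ satisfies $C\subseteq\overline{\textrm{P}(f)}$ with $C\in\mathcal{F}_n(X)$, and conversely any $C\in\mathcal{F}_n(X)$ with $C\subseteq\overline{\textrm{P}(f)}$ arises as such a limit; thus $\overline{\textrm{P}(\mathcal{F}_n(f))}=\{C\in\mathcal{F}_n(X):C\subseteq\overline{\textrm{P}(f)}\}$. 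By Theorem~\ref{th4}, $\overline{\textrm{P}(f)}=\textrm{RR}(f)$, so each point of such a $C$ is regularly recurrent for $f$; then \cite{Mendez}, Theorem~5 (already invoked for Lemma~\ref{lem14}, and applicable precisely because $\textrm{RR}(f)=\overline{\textrm{P}(f)}$) yields $C\in\textrm{RR}(\mathcal{F}_n(f))$. This closes the cycle and gives $\overline{\textrm{P}(\mathcal{F}_n(f))}=\Lambda(\mathcal{F}_n(f))=\textrm{RR}(\mathcal{F}_n(f))$. The delicate point throughout is the transfer of regular recurrence between $f$ and $\mathcal{F}_n(f)$, which is exactly where Mendez's theorem does the essential work.
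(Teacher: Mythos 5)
Your proposal is correct and follows essentially the same route as the paper's proof: part (a) via the asymptotic partner from Theorem~\ref{prop1} together with the fact that regularly recurrent points have minimal $\omega$-limit sets, and part (b) via Lemma~\ref{lem17} applied to asymptotic partners to get $\Lambda(\mathcal{F}_n(f))=\textrm{RR}(\mathcal{F}_n(f))\subset\overline{\textrm{P}(\mathcal{F}_n(f))}$, then the observation that periodic points of $\mathcal{F}_n(f)$ are finite sets of periodic points of $f$, combined with Theorem~\ref{th4} and M\'endez's Theorem~5, for $\overline{\textrm{P}(\mathcal{F}_n(f))}\subset\textrm{RR}(\mathcal{F}_n(f))$. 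You merely spell out some steps the paper leaves implicit (e.g.\ that asymptotic pairs share $\omega$-limit sets and the exact characterization of $\textrm{P}(\mathcal{F}_n(f))$), which is harmless.
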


\begin{proof}
$(a)$ It follows immediately from Theorem \ref{prop1}.\\
$(b)$ By Lemma \ref{lem17} and Theorem \ref{prop1}, we have $\Lambda(\mathcal{F}_n(f))=\textrm{RR}(\mathcal{F}_n(f))\subset \overline{\textrm{P}(\mathcal{F}_n(f))}$. So it suffices to prove that $\overline{\textrm{P}(\mathcal{F}_n(f))}\subset \textrm{RR}(\mathcal{F}_n(f))$. Let $P_n$ be a sequence of periodic points of $\mathcal{F}_n(X)$ that converges to $P=\{a_1,...,a_k\}$. Clearly, for any $i\in \{1,...,k\}, a_i\in \overline{\textrm{P}(f)}$. Recall that $\overline{\textrm{P}(f)}=\textrm{RR}(f)$ thus $P\subset \textrm{RR}(f)$. By (\cite{Mendez}, Theorem 5), $P\in \textrm{RR}(\mathcal{F}_n(f))$.
\end{proof}
The following Corollary follows immediately from Corollary \ref{cor1} and Theorem \ref{th2}.
\begin{cor}Let $f:X\rightarrow X$ be a monotone dendrite map. Then for any $A\in \mathcal{F}_n(X),\  \omega_{\mathcal{F}_n(f)}(A)$ is either finite or a minimal Cantor set.
\end{cor}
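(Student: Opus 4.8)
The plan is to split on whether $M:=\omega_{\mathcal{F}_n(f)}(A)$ is finite or infinite, and in the infinite case to recognize the restriction $\mathcal{F}_n(f)_{\mid M}$ as an adding machine via Theorem \ref{th2}, whose phase space is a Cantor set. First I would invoke Corollary \ref{cor1}(a) to record that $M$ is a minimal set for $\mathcal{F}_n(f)$. If $M$ is finite, then a finite minimal set is exactly a periodic orbit (the orbit of any of its points is finite and dense, hence equal to $M$), and there is nothing further to prove. So the substance lies entirely in the infinite case.

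Assume then that $M$ is infinite. Since $M=\omega_{\mathcal{F}_n(f)}(A)$ is an $\omega$-limit set of the continuous self-map $\mathcal{F}_n(f)$ of the compact metric space $\mathcal{F}_n(X)$, it is closed and strongly invariant, so the restriction $\mathcal{F}_n(f)_{\mid M}:M\to M$ is well defined and $M$ is an infinite minimal set for it. It remains to check that every point of $M$ is regularly recurrent for $\mathcal{F}_n(f)_{\mid M}$. This is where Corollary \ref{cor1}(b) enters: we have $M\subset \Lambda(\mathcal{F}_n(f))=\textrm{RR}(\mathcal{F}_n(f))$, so each element of $M$ is regularly recurrent for $\mathcal{F}_n(f)$; because $M$ is invariant, regular recurrence computed in the ambient space coincides with regular recurrence of the restriction, hence each point of $M$ is regularly recurrent for $\mathcal{F}_n(f)_{\mid M}$.

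With the two hypotheses of Theorem \ref{th2} verified — $M$ an infinite minimal set and every point regularly recurrent — I would conclude that $\mathcal{F}_n(f)_{\mid M}$ is topologically conjugate, via a homeomorphism $h:M\to \triangle_\alpha$, to an adding machine $f_\alpha$ for some sequence $\alpha$ of primes. The final step is to observe that $\triangle_\alpha=\prod_{i\geq 1}\{0,\dots,j_i-1\}$ with each $j_i\geq 2$ is a Cantor set: it is compact, metrizable by the metric $d_\alpha$, totally disconnected as a product of discrete spaces, and perfect since no coordinate admits an isolated value. Transporting this structure back through the homeomorphism $h$ shows that $M$ is a Cantor set. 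I do not anticipate any genuine obstacle here; the only point meriting a word of justification is the standard topological identification of the adding-machine phase space $\triangle_\alpha$ with the Cantor set.
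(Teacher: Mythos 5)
Your proposal is correct and follows exactly the route the paper intends: the paper states that this corollary ``follows immediately from Corollary \ref{cor1} and Theorem \ref{th2}'', which is precisely your argument — minimality and regular recurrence from Corollary \ref{cor1}, then the Block--Keesling characterization (Theorem \ref{th2}) identifying the infinite case with an adding machine on $\triangle_\alpha$, a Cantor set. Your added details (that regular recurrence is intrinsic to the invariant set $M$, and that $\triangle_\alpha$ is compact, perfect, and totally disconnected) are exactly the steps the paper leaves implicit.
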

We also deduce easily from Theorem \ref{prop1} the following:
\begin{cor} The induced map $\mathcal{F}_n(f)$ generated by a monotone dendrite map has no Li-Yorke pair. In particular, it has zero topological entropy.
\end{cor}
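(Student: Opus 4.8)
The plan is to show that $\mathcal{F}_n(f)$ admits no proximal pair that fails to be asymptotic; since a Li-Yorke pair is by definition proximal and non-asymptotic, this yields the first claim. Write $G=\mathcal{F}_n(f)$ and suppose, for contradiction, that $(A,A')$ is a Li-Yorke pair, so $\liminf_k d_H(G^k(A),G^k(A'))=0$ while $\limsup_k d_H(G^k(A),G^k(A'))>0$. By Theorem \ref{prop1} there are $B,B'\in \textrm{RR}(G)$ with $(A,B)$ and $(A',B')$ asymptotic. The triangle inequality
\[
 d_H(G^k(B),G^k(B'))\leq d_H(G^k(B),G^k(A))+d_H(G^k(A),G^k(A'))+d_H(G^k(A'),G^k(B'))
\]
shows, since the first and last terms tend to $0$, that $\liminf_k d_H(G^k(B),G^k(B'))\leq \liminf_k d_H(G^k(A),G^k(A'))=0$; hence $(B,B')$ is again a proximal pair.

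Next I would show that $B$ and $B'$ lie in one and the same minimal set. Being regularly recurrent, $B$ and $B'$ are recurrent, so $B\in \omega_G(B)=:M$ and $B'\in \omega_G(B')=:M'$, and by Corollary \ref{cor1}(a) both $M$ and $M'$ are minimal. Using proximality, choose $k_i\to\infty$ with $d_H(G^{k_i}(B),G^{k_i}(B'))\to 0$; by compactness of $\mathcal{F}_n(X)$ pass to a subsequence so that $G^{k_i}(B)\to z$, whence also $G^{k_i}(B')\to z$. Then $z\in M\cap M'$, and since distinct minimal sets are disjoint we get $M=M'$. By the preceding corollary, $M$ is either a periodic orbit or a minimal Cantor set on which $G$ is conjugate to an adding machine (by Theorem \ref{th2}). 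In both cases $G|_{M}$ is equicontinuous, hence distal, so a proximal pair in $M$ is necessarily diagonal: $B=B'$. Consequently $(A,B)$ and $(A',B)$ are asymptotic, and a second application of the triangle inequality gives $\lim_k d_H(G^k(A),G^k(A'))=0$, i.e. $(A,A')$ is asymptotic, contradicting that it is a Li-Yorke pair. Therefore $G$ has no Li-Yorke pair.

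Finally, the entropy statement follows from the general fact recalled in the Introduction: by \cite{Blanchard}, $h(G)>0$ would force $G$ to be Li-Yorke chaotic and in particular to possess a Li-Yorke pair. As we have just excluded Li-Yorke pairs, $h(\mathcal{F}_n(f))=0$. The only delicate point in this scheme is the passage from proximal to equal for the pair $(B,B')$: it rests on the equicontinuity (hence distality) of odometers, which is exactly what the adding-machine description of the infinite minimal sets of $G$ supplies, together with the fact that proximal limit points land in a common minimal set.
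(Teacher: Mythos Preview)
Your argument is correct and supplies the details the paper omits; the paper gives no explicit proof, merely asserting that the corollary is deduced easily from Theorem~\ref{prop1}. Your route---reducing to a proximal pair $(B,B')$ of regularly recurrent points, placing them in a common minimal set, and then invoking the adding-machine description (Theorem~\ref{th2} via the preceding corollary) to get distality and hence $B=B'$---is exactly the natural way to make that deduction precise, and the entropy conclusion via \cite{Blanchard} matches the paper's setup in the Introduction.
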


\section{\bf Induced maps on the hyperspaces $\mathcal{T}_n(X)$}
Given a dendrite $X$ and $n\in \mathbb{N}$, we denote by\ $\mathcal{T}_n(X)$\  the\ family\ of\ trees\ in\ $X$ with at most $n$ endpoints. We show  that $\mathcal{T}_n(X)$ is a closed subsets in $2^X$ (see Lemma \ref{lem15}). From Lemma \ref{lem11}, $f(\mathcal{T}_n(X))\subset \mathcal{T}_n(X)$. So we may study the dynamic of the map $\mathcal{T}_n(f)=2^f_{\mid{\mathcal{T}_n(X)}}$.

\medskip
\begin{lem}\label{lem15} Let $(X,d)$ be a dendrite. Then $\mathcal{T}_n(X)$ is closed in $(2^X,d_H)$.
\end{lem}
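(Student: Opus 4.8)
The plan is to realize $\mathcal{T}_n(X)$ as the image of the compact space $X^n$ under a continuous \emph{convex hull} map, so that closedness follows from compactness. First I would observe that a subtree $T\subseteq X$ with at most $n$ endpoints is exactly the convex hull of its endpoints, and conversely the convex hull $[\{x_1,\dots,x_n\}]=\bigcup_{1\le i<j\le n}[x_i,x_j]$ of any $n$ points is a subtree whose endpoints form a subset of $\{x_1,\dots,x_n\}$, hence a subtree with at most $n$ endpoints. Consequently, if I define $\psi:X^n\to 2^X$ by $\psi(x_1,\dots,x_n)=\bigcup_{1\le i<j\le n}[x_i,x_j]$ (with the convention $[x_i,x_i]=\{x_i\}$), then $\mathcal{T}_n(X)=\psi(X^n)$: every tree with $\le n$ endpoints $e_1,\dots,e_m$ is $\psi(e_1,\dots,e_m,e_m,\dots,e_m)$ after padding to $n$ entries. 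Since $X^n$ is compact and $2^X$ is Hausdorff, it then suffices to prove that $\psi$ is continuous, for then $\mathcal{T}_n(X)=\psi(X^n)$ is compact and therefore closed.

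The heart of the argument is the continuity of the arc map $c:X\times X\to 2^X$, $c(x,y)=[x,y]$, which I would establish using Lemma \ref{lem8} together with the first point retraction $r_{[x,y]}$. Take $x_k\to x$ and $y_k\to y$. When $x=y$, both $x_k,y_k$ tend to $x$, so by Lemma \ref{lem8} $\mathrm{diam}([x_k,y_k])\to 0$ while $x_k\in[x_k,y_k]$, giving $[x_k,y_k]\to\{x\}$. When $x\neq y$, set $p_k=r_{[x,y]}(x_k)$ and $q_k=r_{[x,y]}(y_k)$; by continuity of the retraction $p_k\to x$ and $q_k\to y$, while $d(x_k,p_k)=d(x_k,[x,y])\to 0$ and $d(y_k,q_k)\to 0$. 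One has the decomposition $[x_k,y_k]=[x_k,p_k]\cup[p_k,q_k]\cup[q_k,y_k]$ with $[p_k,q_k]\subseteq[x,y]$, and by Lemma \ref{lem8} the two outer arcs have diameters tending to $0$. This shows at once that every point of $[x_k,y_k]$ lies within $o(1)$ of $[x,y]$ and, since $[p_k,q_k]$ exhausts $[x,y]$ up to the shrinking arcs $[x,p_k]$ and $[y,q_k]$, that every point of $[x,y]$ lies within $o(1)$ of $[x_k,y_k]$; hence $d_H([x_k,y_k],[x,y])\to 0$.

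Continuity of $\psi$ then follows from the continuity of finite unions: the map $(A,B)\mapsto A\cup B$ satisfies $d_H(A\cup B,A'\cup B')\le\max\{d_H(A,A'),d_H(B,B')\}$, so $\psi$, being the finite union $\bigcup_{i<j}[x_i,x_j]$ of the continuous $2^X$-valued maps $(x_1,\dots,x_n)\mapsto[x_i,x_j]$, is continuous. Therefore $\mathcal{T}_n(X)=\psi(X^n)$ is the continuous image of a compact space, hence compact and closed in $(2^X,d_H)$.

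I expect the main obstacle to be the continuity of the arc map at \emph{degenerate} configurations, namely those where several of the $x_i$ collide in the limit so that the number of endpoints of $\psi(x_1,\dots,x_n)$ drops. Lemma \ref{lem8} is precisely the tool that controls such collapses: it forces an arc between two nearby points to have small diameter, which is what makes the outer arcs $[x_k,p_k]$ and $[q_k,y_k]$ vanish and guarantees the correct limiting behaviour of $\psi$.
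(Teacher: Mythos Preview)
Your proof is correct and takes a different route from the paper's. The paper argues directly: given $(T_k)\subset\mathcal{T}_n(X)$ converging to $T$, it uses compactness of $\mathcal{C}(X)$ to see that $T$ is a sub-dendrite, then derives a contradiction from the hypothesis that $T$ has $n+1$ distinct endpoints $e_1,\dots,e_{n+1}$, since for large $k$ one finds $b_i\in T_k\cap B(e_i,\eps)$ whose convex hull (contained in $T_k$) already has $n+1$ endpoints. Your approach instead realises $\mathcal{T}_n(X)$ as the image of the compact space $X^n$ under the convex-hull map $\psi$ and deduces closedness from continuity of $\psi$. What you gain is a clean conceptual picture, and your continuity argument for $\psi$ is essentially the content of Lemma~\ref{lem12}, which the paper proves separately and uses throughout Section~6; so your approach in effect unifies the two lemmas. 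Two minor remarks: the decomposition $[x_k,y_k]=[x_k,p_k]\cup[p_k,q_k]\cup[q_k,y_k]$ is only guaranteed once $p_k\neq q_k$, which holds for large $k$ since $p_k\to x\neq y\leftarrow q_k$; and in a general dendrite metric the first-point retraction need not realise the distance, so the equality $d(x_k,p_k)=d(x_k,[x,y])$ may fail, but you only need $d(x_k,p_k)\to 0$, which follows from continuity of $r_{[x,y]}$.
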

\begin{proof}
Let $k\in \mathbb{N}$ and $(T_n)_n$ be a sequence of trees with $k$ endpoints. If $(T_n)_n$ converges in the Hausdorff metric to $T$, then we will prove that $T$ is a tree with at most $k$ endpoints. Indeed, by the fact that $(\mathcal{C}(X),d_H)$ is compact, $T$ is a sub-dendrite of $X$. Suppose that $card(\textrm{End}(T))> k$. Let $\{e_1,...,e_{k+1}\}\subset \textrm{End}(T)$ with $e_i\neq e_j$ for each $i\neq j$. There is $\eps> 0$ such that for any $i\in \{1,...,k+1\}$ and for any $b_i\in B(e_i,\eps), [\{b_1,...,b_{k+1}\}]$ is a subtree of $X$ with $(k+1)$ endpoints. As, $\underset{n\to +\infty}\lim d_H(T_n,T)=0$ so for some $n\in \mathbb{N}$ and for any $i\in \{1,...,k+1\}$, there is $b_i\in T_n\cap B(e_i,\eps)$. Thus, $[\{b_1,...,b_{k+1}\}]$ is a subtree of $X$ with $(k+1)$ endpoints. As, $\{b_1,...,b_{k+1}\}\subset T_n$ then $[\{b_1,...,b_{k+1}\}]\subset T_n$. So, $T_n$ has more then $k$ endpoints, a contradiction.
\end{proof}
\begin{lem}\label{lem12}Let $(X,d)$ be a dendrite. Let $\varepsilon > 0$  and $0<\delta< \varepsilon$ be as in Lemma \ref{lem8}. Let $k\in \mathbb{N}$ and let $A=\{a_1,a_2,...,a_k\}, $ $B=\{b_1,b_2,...,b_k\}$ be two subsets of $X$ with  $d(a_i,b_i)<\delta$ for each $i\in \{1,...,k\}$ then
$d_H([A],[B])<\varepsilon$.
\end{lem}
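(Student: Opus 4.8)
The plan is to reduce the statement to the case of two arcs and then analyse the geometry of four points in a tree. First I would record the elementary fact that the Hausdorff distance behaves well under finite unions with matching index sets: writing $[A]=\bigcup_{1\le i<j\le k}[a_i,a_j]$ and $[B]=\bigcup_{1\le i<j\le k}[b_i,b_j]$, one has
\[
d_H([A],[B])\le \max_{1\le i<j\le k} d_H([a_i,a_j],[b_i,b_j]).
\]
Indeed, any $u\in[A]$ lies in some $[a_i,a_j]$, whence $d(u,[B])\le d(u,[b_i,b_j])\le d_H([a_i,a_j],[b_i,b_j])$, and symmetrically for points of $[B]$. Since there are finitely many index pairs, it suffices to prove $d_H([a_i,a_j],[b_i,b_j])<\varepsilon$ for a single pair, i.e. the case $k=2$.

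So the heart of the matter is: given arcs $[a,a']$ and $[b,b']$ with $d(a,b)<\delta$ and $d(a',b')<\delta$, show $d_H([a,a'],[b,b'])<\varepsilon$. By Lemma \ref{lem8} the hypotheses give $\mathrm{diam}([a,b])<\varepsilon$ and $\mathrm{diam}([a',b'])<\varepsilon$; set $\eta=\max\{\mathrm{diam}([a,b]),\mathrm{diam}([a',b'])\}<\varepsilon$. I would introduce the two points $p,q\in[a,a']$ determined by $[a,a']\cap[a,b]=[a,p]$ and $[a,a']\cap[a',b']=[q,a']$; equivalently $p$ is the median of $a,a',b$ and $q$ the median of $a,a',b'$. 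After relabelling so that the order along $[a,a']$ is $a,p,q,a'$, the arc splits as $[a,a']=[a,p]\cup[p,q]\cup[q,a']$ with $[a,p]\subseteq[a,b]$ and $[q,a']\subseteq[a',b']$. Consequently every $u\in[a,p]$ satisfies $d(u,[b,b'])\le d(u,b)\le\eta$ (as $b\in[b,b']$), and likewise every $u\in[q,a']$ satisfies $d(u,[b,b'])\le d(u,b')\le\eta$.

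The remaining, and main, point is to control the middle arc $[p,q]$. I would show $[p,q]\subseteq[b,b']$ by a separation argument in the dendrite: if $p\ne q$ and $x\in(p,q)$, then $p\in[a,x)$ puts $b$ into the component of $X\setminus\{x\}$ containing $a$ (since $[b,p]$ meets $[a,a']$ only at $p$ and hence avoids $x$), while $q\in(x,a']$ puts $b'$ into the component containing $a'$; thus $x$ separates $b$ from $b'$, so $x\in[b,b']$. Hence $(p,q)\subseteq[b,b']$ and, by closedness, $[p,q]\subseteq[b,b']$, giving $d(u,[b,b'])=0$ there; when $p=q$ the middle is a single point, already covered by the two outer cases. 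Altogether $\sup_{u\in[a,a']}d(u,[b,b'])\le\eta$, and the reverse inequality $\sup_{v\in[b,b']}d(v,[a,a'])\le\eta$ follows from the symmetric argument, the hypotheses being symmetric in the pairs $(a,b)$ and $(a',b')$. Therefore $d_H([a,a'],[b,b'])\le\eta<\varepsilon$, and combining with the union bound yields $d_H([A],[B])<\varepsilon$.

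The obstacle I anticipate is precisely this middle segment together with the degenerate configurations (for instance $p=q$, or $p$, $q$ coinciding with $a$, $a'$, $b$ or $b'$), where the medians collapse and one cannot directly invoke the diameter control of Lemma \ref{lem8}. The separation argument is what makes these cases uniform; I would also take care to keep the final bound at $\eta$, which is \emph{strictly} below $\varepsilon$, so that the strict inequality in the statement is preserved through both the two-arc estimate and the finite maximum over index pairs.
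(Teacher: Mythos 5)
Your argument is sound in its main lines, but it is organized quite differently from the paper's. The paper never reduces to pairs: for $x\in[A]\setminus([B]\cup A)$, say $x\in(a_i,a_j)$, it observes that $[B]$ is connected and misses $x$, so $[B]$ lies in a single connected component of $X\setminus\{x\}$, while $a_i$ and $a_j$ lie in different components; hence one of them, say $a_i$, is separated by $x$ from \emph{all} of $[B]$, which forces $x\in[a_i,b_i]$ and then $d(x,b_i)<\varepsilon$ by Lemma \ref{lem8}. That single separation argument needs no medians and no discussion of how points are ordered along an arc. Your route --- the union bound over matched index pairs (which is correct, and does preserve the strict inequality since the maximum runs over finitely many pairs), followed by the three-piece median decomposition of $[a,a']$ --- buys a more modular statement (control of $d_H$ arc by arc, plus the structural fact $[p,q]\subseteq[b,b']$), but it is exactly this localization to pairs that creates an ordering problem the paper's proof never meets.

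That ordering problem is the one step that does not survive scrutiny: the phrase ``after relabelling so that the order along $[a,a']$ is $a,p,q,a'$''. The crossed configuration, in which the median $q$ of $a,a',b'$ lies strictly between $a$ and the median $p$ of $a,a',b$, cannot be removed by any relabelling: swapping the symbols $p$ and $q$ restores the order but destroys the properties your middle-arc argument invokes --- with $p$ now the median of $a,a',b'$, the assertion that $[b,p]$ meets $[a,a']$ only at $p$ is false, and for $x$ in the middle arc the point $b$ lies in the component of $a'$, not of $a$, so the premises of your separation step fail as written. The crossed case does occur under the hypotheses: on $X=[0,1]$ take $a<b'<b<a'$ with $a'-a<\delta$, so that $p=b$ and $q=b'$. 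Fortunately it is the easy case: there $q\in[a,p]$, hence $[a,a']=[a,p]\cup[q,a']\subseteq[a,b]\cup[a',b']$, and every point of $[a,a']$ is already within $\eta$ of $b$ or of $b'$, with no middle arc to control. So replace the relabelling by an explicit two-case split on the order of the medians; with that one sentence added, your proof is complete and correct.
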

\begin{proof}
Let $x\in [A].$ If $x\in [B]$ then $d(x,[B])=0$. If $x \notin [B]$ and $x\in A$ then $x=a_i$ for some $i\in \{1,...,k\}$ and so $d(x,b_i)<\delta<\varepsilon$. Therefore, $d(x,[B])<\varepsilon$. If $x \notin ([B] \cup A)$ then for some $i,j\in\{1,...,k\},\  x\in[a_i,a_j]$ where $a_i\neq x\neq a_j$. Let $C_i$ (resp. $C_j$) be the connected component of $X\setminus \{x\}$ that contain $a_i$ (resp. $a_j$), then at least one of them is disjoint from the connected component of $X\setminus\{x\}$ that contain $[B]$, suppose for example $C_i\cap [B]=\emptyset$. We have then $x\in [r_{[B]}(x),a_i]$. Hence, $x\in [b_i,a_i]$.
By Lemma \ref{lem8}, $\textrm{diam}[b_i,a_i]< \varepsilon$ and so $d(x,b_i)< \varepsilon$.
 In conclusion, $d(x,[B])<\varepsilon$. Similarly, we prove that for each $y\in [B],\ d(y,[A])< \varepsilon$. In result, $d_H([A],[B])<\varepsilon$.
\end{proof}
\begin{thm}\label{prop2}Let $(X,d)$ be a dendrite and $f:X\rightarrow X$ a monotone dendrite map. Then for any  $T\in \mathcal{T}_n(X)$, there exists $K\in \textrm{RR}(\mathcal{T}_n(f))\cap\overline{\textrm{P}(\mathcal{T}_n(f))}$ asymptotic to $T$.

\end{thm}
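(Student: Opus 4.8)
The plan is to run the argument of Theorem \ref{prop1} one level up, replacing a finite set by the subtree it spans and transferring every point-wise estimate to the Hausdorff metric through the convex hull operation. Write $\textrm{End}(T)=\{x_1,\dots,x_k\}$ with $k\le n$, so that $T=[\{x_1,\dots,x_k\}]=\bigcup_{i<j}[x_i,x_j]$. The crucial structural fact is that the dynamics on such a tree is completely determined by the dynamics on its endpoints: by Lemma \ref{lem1}, $f([x_i,x_j])=[f(x_i),f(x_j)]$, so for every $m$ we have
$$\mathcal{T}_n(f)^m(T)=f^m(T)=\bigcup_{i<j}[f^m(x_i),f^m(x_j)]=[\{f^m(x_1),\dots,f^m(x_k)\}].$$
By Lemma \ref{lem13}, for each $i$ choose $y_i\in\omega_f(x_i)$ with $(x_i,y_i)$ asymptotic for $f$, and set $K:=[\{y_1,\dots,y_k\}]$. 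Being the convex hull of at most $k\le n$ points, $K$ is a subtree with at most $n$ endpoints, so $K\in\mathcal{T}_n(X)$ (an invariant closed subspace by Lemmas \ref{lem11} and \ref{lem15}), and the same identity gives $\mathcal{T}_n(f)^m(K)=[\{f^m(y_1),\dots,f^m(y_k)\}]$.

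For the asymptoticity of $(T,K)$, fix $\varepsilon>0$ and take $\delta=\delta(\varepsilon)$ from Lemma \ref{lem8}. Since each pair $(x_i,y_i)$ is asymptotic, there is $m_0$ with $d(f^m(x_i),f^m(y_i))<\delta$ for all $i$ and all $m\ge m_0$; Lemma \ref{lem12}, applied to the matched families $\{f^m(x_i)\}$ and $\{f^m(y_i)\}$, then yields $d_H(\mathcal{T}_n(f)^m(T),\mathcal{T}_n(f)^m(K))<\varepsilon$. Hence $\lim_m d_H(\mathcal{T}_n(f)^m(T),\mathcal{T}_n(f)^m(K))=0$, i.e. $(T,K)$ is asymptotic.

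To see $K\in\textrm{RR}(\mathcal{T}_n(f))$, note first that each $y_i\in\omega_f(x_i)\subset\Lambda(f)=\textrm{RR}(f)$ by Theorem \ref{th4}. Given $\varepsilon>0$ and the associated $\delta$ from Lemma \ref{lem8}, pick for each $i$ an integer $N_i$ with $d(y_i,f^{jN_i}(y_i))<\delta$ for all $j$, and put $N:=N_1\cdots N_k$; since regular recurrence with modulus $N_i$ persists for every multiple of $N_i$, we get $d(y_i,f^{jN}(y_i))<\delta$ simultaneously for all $i$ and all $j$. Lemma \ref{lem12} then gives $d_H(K,\mathcal{T}_n(f)^{jN}(K))=d_H([\{y_i\}],[\{f^{jN}(y_i)\}])<\varepsilon$ for all $j$, which is exactly regular recurrence of $K$. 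Finally, for $K\in\overline{\textrm{P}(\mathcal{T}_n(f))}$, use $\textrm{RR}(f)=\overline{\textrm{P}(f)}$ (Theorem \ref{th4}) to choose periodic points $p_i$ of $f$ with $d(y_i,p_i)<\delta$; letting $m$ be a common period of $p_1,\dots,p_k$, the tree $P:=[\{p_1,\dots,p_k\}]$ satisfies $\mathcal{T}_n(f)^m(P)=[\{f^m(p_i)\}]=P$, so $P\in\textrm{P}(\mathcal{T}_n(f))$, while Lemma \ref{lem12} gives $d_H(K,P)<\varepsilon$.

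The routine ingredients are all in place; the one point requiring care, and the real content of the argument, is the passage from the point-wise statements for the endpoints to the corresponding Hausdorff-metric statements for the spanned trees. This is exactly what Lemma \ref{lem12} (continuity of the convex hull under matched perturbations) together with the commutation $f^m([\{x_i\}])=[\{f^m(x_i)\}]$ from Lemma \ref{lem1} supply, and the only subtlety is to make a single recurrence modulus work for all $k$ endpoints at once, which is handled by passing to the common multiple $N=N_1\cdots N_k$.
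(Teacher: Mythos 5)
Your proof is correct and follows essentially the same route as the paper's: choose asymptotic partners $y_i\in\omega_f(x_i)$ for the endpoints via Lemma \ref{lem13}, take $K$ to be their convex hull, and transfer the pointwise asymptotic/recurrence/periodic-approximation estimates (from Theorem \ref{th4}) to Hausdorff-metric estimates via Lemmas \ref{lem8} and \ref{lem12} together with the commutation $f^m([\{z_i\}])=[\{f^m(z_i)\}]$. The only differences are cosmetic: you work with the full list $\{y_1,\dots,y_k\}$ rather than restricting to $\textrm{End}(K)$ as the paper does, and you spell out the common recurrence modulus $N=N_1\cdots N_k$, a point the paper leaves implicit.
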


\begin{proof}

Suppose that $T\in \mathcal{T}_n(X) $ and $\textrm{End}(T)=\{a_1,...,a_k\}$ where $1\leq k\leq n$. By Lemma \ref{lem13}, for each $i\in \{1,...,k\}$, there exists $b_i\in \omega_f(a_i)$ asymptotic to $a_i$. Set $K=[\{b_1,...,b_k\}]$. Let $\varepsilon> 0$ and $0<\delta<\varepsilon$ be as in Lemma \ref{lem8}, then there is $N_0\in \mathbb{N}$ such that for any $n\geq N_0, d(f^n(a_i),f^n(b_i))<\delta$, for each $i\in \{1,...,k\}$. By assertion (b) in Lemma \ref{lem11}, $[\{f^n(a_1),...,f^n(a_k)\}]=f^n(T)$ and $[\{f^n(b_1),...,f^n(b_k)\}]=f^n(K).$ By Lemma \ref{lem12}, $d_H(f^n(T),f^n(K))<\varepsilon$, for any $n\geq N_0$. Lets prove now that $K$ is a regularly recurrent point for the induced map $\mathcal{T}_n(f)$ and belongs to the closure of the set of its periodic points. Indeed, the set of endpoints of $K,\ \{c_1,...,c_r\}\subset \{b_1,...,b_k\}\subset \Lambda(f)=\textrm{RR}(f)$ by Theorem \ref{th4}. Let $\varepsilon > 0,\ 0<\delta<\varepsilon$ be as in Lemma \ref{lem8}, there is $N\in \mathbb{N}$ for which $d(c_i,f^{kN}(c_i))<\delta$ for each $i\in \{1,..,r\}$ and for any $k\in \mathbb{N}$. Also by Theorem \ref{th4}, there is a periodic point $p_i\in B(c_i,\delta)$ for each $i\in \{1,..,r\}$, it is clear that the convex hull $P$ of $\{p_1,\dots,p_r\}$ is a tree which belong to $\textrm{P}(\mathcal{T}_n(f))$. By assertion (b) of Lemma \ref{lem11}, $[\{f^{kN}(c_1),...,f^{kN}(c_r)\}]=f^{kN}(K)$ and by Lemma \ref{lem12}, $d_H(K,f^{kN}(K))<\varepsilon$. Again by Lemma \ref{lem12}, $d_H(K,P)<\eps$. This implies  that $K\in \textrm{RR}(\mathcal{T}_n(f))\cap \overline{\textrm{P}(\mathcal{T}_n(f))}.$
\end{proof}

\begin{cor}\label{cor2} Let $(X,d)$ be a dendrite and $f:X\rightarrow X$ a monotone dendrite map. For each $n\in \mathbb{N}$, we have:
\begin{itemize}
\item[(a)] $\omega_{\mathcal{T}_n(f)}(T)$ is minimal for each $T\in \mathcal{T}_n(X)$.
\item[(b)] $\overline{\textrm{P}(\mathcal{T}_n(f))}=\Lambda(\mathcal{T}_n(f))=\textrm{RR}(\mathcal{T}_n(f))$.
\end{itemize}
\end{cor}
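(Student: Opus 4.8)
The plan is to mirror the proof of Corollary \ref{cor1}, with Theorem \ref{prop2} playing the role of Theorem \ref{prop1}. Throughout I use that $\mathcal{T}_n(X)$ is a compact metric space, being closed in the compact space $(2^X,d_H)$ by Lemma \ref{lem15}, so that Lemma \ref{lem17} applies verbatim to the continuous map $\mathcal{T}_n(f):\mathcal{T}_n(X)\to\mathcal{T}_n(X)$.

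For assertion (a) I would argue exactly as for Corollary \ref{cor1}(a). Given $T\in\mathcal{T}_n(X)$, Theorem \ref{prop2} furnishes $K\in\textrm{RR}(\mathcal{T}_n(f))$ asymptotic to $T$. Since $K$ is regularly recurrent, $\omega_{\mathcal{T}_n(f)}(K)$ is minimal, and because $(T,K)$ is asymptotic one has $\omega_{\mathcal{T}_n(f)}(T)=\omega_{\mathcal{T}_n(f)}(K)$; hence $\omega_{\mathcal{T}_n(f)}(T)$ is minimal.

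For assertion (b) the inclusion $\textrm{RR}(\mathcal{T}_n(f))\subseteq\Lambda(\mathcal{T}_n(f))$ is general. To obtain $\Lambda(\mathcal{T}_n(f))\subseteq\textrm{RR}(\mathcal{T}_n(f))\cap\overline{\textrm{P}(\mathcal{T}_n(f))}$, take $T\in\omega_{\mathcal{T}_n(f)}(S)$; by part (a) this $\omega$-limit set is minimal, so $T$ is recurrent, i.e. $T\in\textrm{R}(\mathcal{T}_n(f))$. Theorem \ref{prop2} gives $K\in\textrm{RR}(\mathcal{T}_n(f))\cap\overline{\textrm{P}(\mathcal{T}_n(f))}$ asymptotic to $T$, and Lemma \ref{lem17} (with $x=K$, $y=T$) then forces $T=K$, so $T\in\textrm{RR}(\mathcal{T}_n(f))\cap\overline{\textrm{P}(\mathcal{T}_n(f))}$. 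Together with the trivial inclusion this yields $\Lambda(\mathcal{T}_n(f))=\textrm{RR}(\mathcal{T}_n(f))\subseteq\overline{\textrm{P}(\mathcal{T}_n(f))}$.

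It remains to prove $\overline{\textrm{P}(\mathcal{T}_n(f))}\subseteq\textrm{RR}(\mathcal{T}_n(f))$, which I expect to be the \textbf{main obstacle}. Let $(P_m)_m$ be periodic points of $\mathcal{T}_n(f)$ with $P_m\to P$. First I record that each endpoint of a periodic tree is a periodic point of $f$: if $f^{q}(P_m)=P_m$ then by Lemma \ref{lem11} the finite set $\textrm{End}(P_m)$ satisfies $f^{q}(\textrm{End}(P_m))\supseteq\textrm{End}(P_m)$, while $f^{q}(\textrm{End}(P_m))$ has at most $|\textrm{End}(P_m)|$ elements, so $f^{q}$ permutes $\textrm{End}(P_m)$ and hence $\textrm{End}(P_m)\subseteq\textrm{P}(f)$. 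The delicate point is that the endpoints of the limit $P$ could a priori be approximated by non-endpoint (and non-periodic) points of the $P_m$. To rule this out I would pass to a subsequence along which $s:=|\textrm{End}(P_m)|$ is constant and the labelled endpoints converge, say $e_i^{(m)}\to e_i^{\ast}$ for $1\leq i\leq s$; then Lemma \ref{lem12} gives $P_m=[\{e_1^{(m)},\dots,e_s^{(m)}\}]\to[\{e_1^{\ast},\dots,e_s^{\ast}\}]$, whence $P=[\{e_1^{\ast},\dots,e_s^{\ast}\}]$ and therefore $\textrm{End}(P)\subseteq\{e_1^{\ast},\dots,e_s^{\ast}\}\subseteq\overline{\textrm{P}(f)}=\textrm{RR}(f)$ by Theorem \ref{th4}. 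Finally, writing $\textrm{End}(P)=\{c_1,\dots,c_r\}$ and choosing, for given $\eps>0$ with the associated $\delta$ of Lemma \ref{lem8}, a common period $N$ (a common multiple of the individual regular-recurrence periods) with $d(c_i,f^{kN}(c_i))<\delta$ for all $i$ and all $k$, Lemma \ref{lem11} gives $f^{kN}(P)=[\{f^{kN}(c_1),\dots,f^{kN}(c_r)\}]$ and Lemma \ref{lem12} yields $d_H(P,f^{kN}(P))<\eps$ for all $k$; hence $P\in\textrm{RR}(\mathcal{T}_n(f))$, exactly as in the proof of Theorem \ref{prop2}.
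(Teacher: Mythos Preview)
Your proposal is correct and follows the same overall architecture as the paper: part (a) via Theorem \ref{prop2}, then $\Lambda(\mathcal{T}_n(f))=\textrm{RR}(\mathcal{T}_n(f))\subseteq\overline{\textrm{P}(\mathcal{T}_n(f))}$ via Theorem \ref{prop2} and Lemma \ref{lem17}, and finally the reverse inclusion $\overline{\textrm{P}(\mathcal{T}_n(f))}\subseteq\textrm{RR}(\mathcal{T}_n(f))$ by showing that the endpoints of the limit tree lie in $\overline{\textrm{P}(f)}=\textrm{RR}(f)$ and then invoking Lemmas \ref{lem11} and \ref{lem12}.

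The one place where your argument genuinely diverges from the paper is in establishing $\textrm{End}(P)\subset\overline{\textrm{P}(f)}$ for the limiting tree $P$. The paper works intrinsically at $P$: given an endpoint $e\in\textrm{End}(P)$, it uses the first-point retraction $r_P$ and a local arc $(b,e]$ open in $P$ to locate, for large $m$, an endpoint $e_m\in\textrm{End}(P_m)$ whose retraction lies near $e$, and then bounds $d(e,e_m)$ via Lemma \ref{lem8}. Your route is more direct: after passing to a subsequence with a fixed number $s$ of endpoints and convergent labelled endpoints $e_i^{(m)}\to e_i^{\ast}$, you invoke Lemma \ref{lem12} to get $P_m=[\{e_1^{(m)},\dots,e_s^{(m)}\}]\to[\{e_1^{\ast},\dots,e_s^{\ast}\}]$, whence $\textrm{End}(P)\subseteq\{e_1^{\ast},\dots,e_s^{\ast}\}\subset\overline{\textrm{P}(f)}$. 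This avoids the retraction machinery entirely and is shorter; the paper's argument, on the other hand, never needs to pass to subsequences or label endpoints, and makes explicit the geometric reason why endpoints of the limit are approached by endpoints of the approximants. Both approaches rest on the same ingredients (Lemmas \ref{lem8}, \ref{lem11}, \ref{lem12} and Theorem \ref{th4}), so the difference is one of packaging rather than of essential content.
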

\begin{proof} (a) It follows immediately from Theorem \ref{prop2}.\\
(b) By Lemma \ref{lem17} and by Theorem \ref{prop2}, we have $\Lambda(\mathcal{T}_n(f))=\textrm{RR}(\mathcal{T}_n(f))\subset \overline{\textrm{P}(\mathcal{T}_n(f))}$. So it suffices to prove that $\overline{\textrm{P}(\mathcal{T}_n(f))}\subset \textrm{RR}(\mathcal{T}_n(f))$. Let $(T_n)_n$ be a sequences of periodic points of $\mathcal{T}_n(f)$ that converges to $T$. Clearly, $\textrm{End}(T_n)\subset \textrm{P}(f)$ for each $n\in\mathbb{N}$. We prove first that $\textrm{End}(T)\subset \overline{\textrm{P}(f)}$. Let $e\in \textrm{End}(T)$  and  $b\in \textrm{Cut}(T)$ such that $(b,e]$ is open in $T$. There is $\mu > 0$ such that $B(e,\mu)\cap T\subset (b,e]$. Let $0<\eps<\mu$ and $0<\delta<\eps$ be as in Lemma \ref{lem8}. Due to the continuity of the map $r_T$, we have $\underset{n\to +\infty} \lim d_H(r_T(T_n),T)=0$. Hence, for some $n \in \mathbb{N},\ d_H(r_T(T_n),T)<\delta$ and $d_H(T_n,T)<\delta$.
Thus, there exists $x_n\in T_n$ such that $r_T(x_n)\in B(e,\delta)$ and so $r_T(x_n)\in (b,e]$ and hence $d(r_T(x_n),e)< \delta$, and hence by Lemma \ref{lem8}, $\textrm{diam}([r_T(x_n),e])<\eps$. As $r_T(T_n)$ is a subtree of $T$, there is $y_n\in T_n$ for which $r_T(y_n)\in [r_T(x_n),e]$ and $(r_T(y_n),e]\cap r_T(T_n)=\emptyset$. Take $e_n\in \textrm{End}(T_n)$ such that $r_T(e_n)=r_T(y_n)$. As $d_H(T,T_n)<\delta$, there exists $x\in T$ such that $d(x,e_n)<\delta$, so $\textrm{diam}([x,e_n])<\eps$ and as $r_T(y_n)=r_T(e_n)\in[x,e_n]$, it follows that $d(e_n,r_T(y_n))<\eps$. So, $d(e_n,e)\leq d(e_n,r_T(y_n))+d(r_T(y_n),e)<2\eps$.
Consequently, $\textrm{End}(T)\subset \overline{\textrm{P}(f)}=\textrm{RR}(f)$. Now, we are going to prove that $T\in \textrm{RR}(\mathcal{T}_n(f))$. Suppose that $\textrm{End}(T)=\{e_1,...,e_k\}$ and $\eps>0$, let $0<\delta<\varepsilon$ be as in Lemma \ref{lem8}, there is $N\in \mathbb{N}$ for which $d(e_i,f^{kN}e_i)<\delta$ for each $i\in \{1,...,k\}$ and for any $k\in \mathbb{N}$. By assertion (b) of Lemma \ref{lem11}, $[f^{kN}(e_1),...,f^{kN}(e_k)]=f^{kN}(T)$ and by Lemma \ref{lem12}, $d_H(T,f^{kN}(T))<\varepsilon$ for any $k\in \mathbb{N}$. So, $T$ is regularly recurrent for $\mathcal{T}_n(f)$.

\end{proof}
The following Corollary follows immediately from Corollary \ref{cor2} and Theorem \ref{th2}.
\begin{cor}Let $f:X\rightarrow X$ be a monotone dendrite map. Then for any $A\in \mathcal{T}_n(X),\  \omega_{\mathcal{T}_n(f)}$ is either finite or a minimal Cantor set.
\end{cor}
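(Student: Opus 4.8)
The plan is to invoke the adding--machine structure theorem (Theorem \ref{th2}) in exactly the same way as in the analogous dichotomy already established for $\mathcal{F}_n(f)$. First I would fix $A\in\mathcal{T}_n(X)$ and apply Corollary \ref{cor2}(a) to conclude that $M:=\omega_{\mathcal{T}_n(f)}(A)$ is a minimal set for $\mathcal{T}_n(f)$. If $M$ is finite there is nothing to prove, since a finite minimal set is a periodic orbit. So I would assume from now on that $M$ is infinite.

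The next step is to check that every point of $M$ is regularly recurrent for $\mathcal{T}_n(f)$. This is immediate from Corollary \ref{cor2}(b): since $M\subset\Lambda(\mathcal{T}_n(f))=\textrm{RR}(\mathcal{T}_n(f))$, each $K\in M$ lies in $\textrm{RR}(\mathcal{T}_n(f))$. Moreover $M$ is compact: by Lemma \ref{lem15} the space $\mathcal{T}_n(X)$ is closed in the compact metric space $(2^X,d_H)$, and $M$, being an $\omega$-limit set, is closed and strongly invariant, so $\mathcal{T}_n(f)_{\mid M}:M\to M$ is a well-defined self-map of an infinite compact metric space that is minimal and all of whose points are regularly recurrent.

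Now I would apply Theorem \ref{th2} to $\mathcal{T}_n(f)_{\mid M}$. The theorem produces a sequence $\alpha=(j_1,j_2,\dots)$ of prime numbers such that $\mathcal{T}_n(f)_{\mid M}$ is topologically conjugate to the adding machine $f_\alpha:\triangle_\alpha\to\triangle_\alpha$; in particular $M$ is homeomorphic to $\triangle_\alpha$. Finally I would observe that $\triangle_\alpha=\prod_{i\geq 1}\{0,\dots,j_i-1\}$ is, as a countable product of finite discrete spaces each having at least two points (the $j_i\geq 2$), a nonempty compact perfect totally disconnected metric space, hence homeomorphic to the classical Cantor set. Since being a Cantor set is a topological property preserved by homeomorphism, $M$ is itself a (minimal) Cantor set, which yields the stated dichotomy.

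There is no genuine obstacle here: the argument is a direct concatenation of Corollary \ref{cor2} and Theorem \ref{th2}. The only point that merits an explicit word is the identification of $\triangle_\alpha$ with a Cantor set, which follows from Brouwer's topological characterization of the Cantor set applied to the product structure of $\triangle_\alpha$.
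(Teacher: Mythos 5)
Your proposal is correct and follows exactly the route the paper intends: the paper's entire proof is the remark that the corollary ``follows immediately from Corollary \ref{cor2} and Theorem \ref{th2}'', and your argument is precisely that concatenation (minimality and regular recurrence from Corollary \ref{cor2}, the adding-machine conjugacy from Theorem \ref{th2}, and the standard identification of $\triangle_\alpha$ with a Cantor set). You merely spell out the details the paper leaves implicit, including the harmless observation that regular recurrence for $\mathcal{T}_n(f)$ passes to the restriction $\mathcal{T}_n(f)_{\mid M}$.
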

We also deduce easily from Theorem \ref{prop2} the following:
\begin{cor} The induced map $\mathcal{T}_n(f)$ generated by a monotone dendrite map has no Li-Yorke pair. In particular, it has zero topological entropy.
\end{cor}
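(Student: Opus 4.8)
The plan is to prove the stronger statement that every proximal pair of $F:=\mathcal{T}_n(f)$ is in fact asymptotic; since a Li--Yorke pair is by definition proximal but not asymptotic, this rules out Li--Yorke pairs, and the vanishing of the entropy follows formally because positive topological entropy forces Li--Yorke chaos, hence a Li--Yorke pair, by \cite{Blanchard}. So I would start from an arbitrary proximal pair $(T,S)$, meaning $\liminf_{m\to\infty} d_H(F^m(T),F^m(S))=0$, and aim to upgrade this to $\limsup_{m\to\infty} d_H(F^m(T),F^m(S))=0$.

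The first step is to replace $T$ and $S$ by regularly recurrent points. By Theorem \ref{prop2} I would pick $K,L\in \textrm{RR}(F)$ with $K$ asymptotic to $T$ and $L$ asymptotic to $S$. Comparing $F^m(K)$ with $F^m(L)$ through $F^m(T)$ and $F^m(S)$ by the triangle inequality for $d_H$, the two asymptotic contributions vanish, and I conclude on the one hand that $(K,L)$ is again proximal, and on the other hand that $(T,S)$ is asymptotic exactly when $(K,L)$ is. Thus the whole problem reduces to showing that a proximal pair of regularly recurrent points must be equal.

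For that reduction I would exploit the minimal--set structure established earlier. Choosing $m_i\to\infty$ with $d_H(F^{m_i}(K),F^{m_i}(L))\to 0$ and extracting, by compactness of $(\mathcal{T}_n(X),d_H)$, a convergent subsequence $F^{m_i}(K)\to z$, I get $F^{m_i}(L)\to z$ as well, so $z\in\omega_F(K)\cap\omega_F(L)$. Since $K$ and $L$ are regularly recurrent, $\omega_F(K)$ and $\omega_F(L)$ are minimal; as they share the point $z$ they must coincide in a single minimal set $M$, and recurrence gives $K,L\in M$. By the Corollary following Corollary \ref{cor2}, $M$ is either finite, hence a periodic orbit, or an infinite minimal set on which $F$ is conjugate to an adding machine (Theorem \ref{th2}); in either case $(M,F_{\mid M})$ is distal---a cyclic permutation of a finite set, respectively an equicontinuous group rotation. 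Hence the only proximal pair in $M$ is the diagonal, so $K=L$, whence $(T,S)$ is asymptotic and $F$ has no Li--Yorke pair.

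I expect the delicate point to be the confinement of $K$ and $L$ to one and the same minimal set: proximality only supplies a common subsequential limit $z$, and it is the minimality of $\omega_F(K)$ and $\omega_F(L)$ together with the disjointness of distinct minimal sets that forces $\omega_F(K)=\omega_F(L)$; without this the two regularly recurrent points could a priori be proximal across different minimal sets. Once they are trapped in the distal set $M$, the conclusion is immediate, and Lemma \ref{lem17} is available as an alternative way to close the argument should one prefer to reason through asymptoticity rather than distality.
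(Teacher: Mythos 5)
Your proof is correct and is essentially the paper's own (implicit) route: the paper states this corollary as an easy consequence of Theorem \ref{prop2}, and your argument --- replacing a proximal pair by asymptotic regularly recurrent companions, using minimality of their $\omega$-limit sets to trap both companions in one minimal set $M$, and noting that $M\subset\Lambda(\mathcal{T}_n(f))=\textrm{RR}(\mathcal{T}_n(f))$ (Corollary \ref{cor2}(b)) is a periodic orbit or, via Theorem \ref{th2}, an adding machine, hence distal --- is exactly the fleshed-out version of that deduction, with the entropy claim then following from \cite{Blanchard} as the paper intends. One small caveat: your closing remark that Lemma \ref{lem17} gives an alternative finish is doubtful, since that lemma requires the pair $(K,L)$ to be \emph{asymptotic}, which is what is being proved; but this aside is not used in your argument, so it creates no gap.
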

\section{\bf Example of chaotic induced map generated by a homeomorphism dendrite}
 Recall that in the case of $X$ being a tree and $f:X\rightarrow X$ a continuous map, Matviichuk proved in \cite{Matviichuk} that for any subtree $A$ of $X$, $\underset{n\to +\infty}\lim \mathrm{diam} f^n(A)=0$ (so the orbit of $A$ behaves similarly as the orbit of a point) or $A$ is asymptotically periodic (that is, its $\omega$-limit set is a periodic orbit). As a consequence, there is always equality between the topological entropy of $f$ and the topological entropy of its induced map $\mathcal{C}(f)$ (\cite{Matviichuk}, Theorem 4.3). In the case of $X$ being a dendrite it may happen that the dynamic of a continuous map $f:X\rightarrow X$ is quite simple but its induced map $\mathcal{C}(f)$ is quite complicated. For instance, it was shown in \cite{Acosta} that for some homeomorphism of a dendrite $f:X\to X $, its induced map $\mathcal{C}(f)$ has infinite topological entropy and then it is Li-Yorke chaotic but $f$ has no Li-Yorke pair  and with zero topological entropy. In this section, we give a similar example of a homeomorphism dendrite $g:S\rightarrow S$, where its induced map $\mathcal{C}(g)$  has infinite topological entropy and we show in addition that it is $\omega$-chaotic. (Notice that $\mathcal{C}(f)$ in the example of Section 7 in \cite{Acosta} is also $\omega$-chaotic but there is no proof of this fact in \cite{Acosta}).

\medskip
\underline{\textit{The dendrite $S$.}} We define the dendrite $S$ as a subset of the complex plane as follows: $S=\displaystyle\cup_{n\in \mathbb{Z}}I_n$ where for each $n\in \mathbb{Z_+}$, we let\\
 $I_{-n}=\{te^{\frac{i\pi}{n+2}},\ 0\leq t\leq \frac{1}{n+1}\}$ and $I_n=\{te^{i(\pi-\frac{\pi}{n+2})},  0\leq t\leq \frac{1}{n+1}\}$.\\

\underline{\textit{The map $g$.}} For any $n\in \mathbb{Z}_+$, we let
\begin{itemize}
  \item $g(te^{\frac{i\pi}{n+3)}})=\frac{n+2}{n+1}te^{\frac{i\pi}{n+2}},\ \forall t\in [0,\frac{1}{n+2}]$.
  \item $g(te^{i(\pi-\frac{\pi}{n+2})})=\frac{n+1}{n+2}te^{i(\pi-\frac{\pi}{n+3})}, \forall t\in [0,\frac{1}{n+1}]$.
\end{itemize}

In this way, $g$ is a homeomorphism of $S$ satisfying the following properties:
\begin{itemize}
\item[*]$\textrm{Fix}(g)=\{0\}$.
\item[*]$\forall n\in \mathbb{Z}$, $g(I_n)=I_{n+1}$.
\end{itemize}
So any point in $S$ is asymptotic to $0$. This illustrates the triviality of the dynamic of $f$.\ Moreover, since $S$ is a dendrite and $g$ is a homeomorphism, by (\cite{Acosta1}, Corollary 3.9), the topological entropy of $g$ is zero.

\underline{\textit{$\omega$-chaos for the induced map $\mathcal{C}(g)$}}: we are going to construct an uncountable $\omega$-scrambled set for $\mathcal{C}(g)$. Let $\{a_n,\ n\in \mathbb{N}\}$ be a dense sequence in $[0,1]$ and let for any $\lambda \in (0,1],\ h_{\lambda}:[0,1]\rightarrow [\frac{\lambda}{2},\lambda], t\mapsto \frac{\lambda}{2}t+\frac{\lambda}{2}$.
Let $a_{\lambda}(n)=h_{\lambda}(a(n)),\ \forall n\in \mathbb{N}$. So, for each $\lambda\in (0,1]$, we let $S_{\lambda}=\displaystyle\cup_{n\in \mathbb{N}}J_{-2^n}(\lambda)$ where
$$J_{-2^n}(\lambda)=\{te^{\frac{i\pi}{2^n+2}},\ 0\leq t\leq \frac{a_{\lambda}(n)}{2^n+1}\}\subset I_{-2^n}.$$
The set $\{S_{\lambda},\ \lambda\in (0,1]\}$ is an $\omega$-scrambled set: For any $\lambda\in (0,1],\ \{0\} \in \omega_{\mathcal{C}(g)}(S_{\lambda})$. Indeed, $d_H(g^{2^{n}+2^{n-1}}(S_{\lambda}),\{0\})\leq \frac{1}{2^{n-1}+1}\to 0$ when $n \to +\infty$.

For any $\alpha \in [\frac{\lambda}{2},\lambda]$, denote by $K_{\alpha}=[0,\alpha e^{\frac{i\pi}{2}}]$.
 Then we have $\{K_{\alpha}, \alpha\in [\frac{\lambda}{2},\lambda]\}\subset \omega_{\mathcal{C}(g)}(S_{\lambda})$: As $\{a_{\lambda}(n), n\in \mathbb{N}\}$ is a dense subset of $[\frac{\lambda}{2},\lambda]$, there is a sequence $(m_i)_i$ of positif integers such that $\underset{i\to+\infty}\lim m_i=+\infty$ and $\underset{i\to+\infty}\lim a_{\lambda}(m_i)=\alpha$. Hence, we get $d_H(K_\alpha,f^{2^{m_i}}(S_{\lambda}))\leq max\{\frac{1}{2^{m_i-1}+1},\mid a_\lambda(m_i)-\alpha\mid\}$.

 Let $\lambda, \lambda' \in (0,1]$ such that $\lambda< \lambda'$ then\\
 \begin{itemize}
   \item $\{K_{\alpha}, \lambda< \alpha\leq \lambda'\}\subset \omega_{\mathcal{C}(f)}(S_{\lambda'})\setminus [\omega_{\mathcal{C}(g)}(S_{\lambda})\cup P(\mathcal{C}(g))]$.
   \item $\{K_{\alpha}, \frac{\lambda}{2}< \alpha\leq min(\frac{\lambda'}{2},\lambda)\}\subset \omega_{\mathcal{C}(g)}(S_{\lambda})\setminus [\omega_{\mathcal{C}(g)}(S_{\lambda'})\cup P(\mathcal{C}(g))].$
 \end{itemize}

 Hence, $\{S_{\lambda},\ \lambda \in (0,1]\}$ is an $\omega$-scrambled set. Clearly, $\{S_{\lambda},\ \lambda \in (0,1]\}$ is uncountable. So, $\mathcal{C}(g)$ is $\omega$-chaotic.\\

\underline{\textit{Topological entropy for the induced map $\mathcal{C}(g)$:}}
Let $k\in \mathbb{N}$ and $\varepsilon=\frac{1}{k}$. For any $n\in \mathbb{N}$ and for any $\sigma=(\sigma_1,...,\sigma_n)\in \{1,2,...,k\}^n$, we let $T_{\sigma}$ be the subtree of $S$ defined as follow:
 $$T_{\sigma}=\displaystyle\cup_{j=1}^{n}[0,\frac{\sigma_j}{k(j+1)}e^{\frac{i\pi}{j+2}}]$$
For any $\sigma\neq \sigma'\in \{1,2,3,...,k\}^n$, there exists $j\in \{1,...,k\}$ such that $\sigma_j\neq\sigma_j'$. So, $d_H(g^j(T_{\sigma}),g^j(T_{\sigma'}))\geq \frac{1}{k}$. Thus, $\{T_{\sigma},\ \sigma\in \{1,2,...,k\}^n\}$ is an $(n,C(g),\varepsilon)$-separated set. It follows that $sep(n,C(g),\varepsilon)\geq k^n$. Hence, $h(C(g))\geq \ln(k), \forall k\in \mathbb{N}$. Therefore, $h(C(g))=+\infty$.

\end{document}